\theoremstyle{plain}
\newtheorem{Thm}{Theorem}[section]
\newtheorem{Lem}[Thm]{Lemma}
\newtheorem{Prop}[Thm]{Proposition}
\theoremstyle{definition}
\newtheorem{Def}[Thm]{Definition}
\newtheorem{Def-Lem}[Thm]{Definition-Lemma}
\newtheorem{Cond}[Thm]{Condition}
\newtheorem{Rem}[Thm]{Remark}
\newtheorem*{Ack}{Acknowledgments}
\newcommand{\Aut}{\operatorname{Aut}}
\newcommand{\Bir}{\operatorname{Bir}}
\newcommand{\Proj}{\operatorname{Proj}}
\newcommand{\prt}{\partial}
\newcommand{\Sing}{\operatorname{Sing}}
\newcommand{\Cl}{\operatorname{Cl}}
\newcommand{\Bs}{\operatorname{Bs}}
\newcommand{\mult}{\operatorname{mult}}
\newcommand{\wt}{\operatorname{wt}}
\newcommand{\bcap}{\bigcap\nolimits}
\newcommand{\bsum}{\sum\nolimits}
\newcommand{\mbA}{\mathbb{A}}
\newcommand{\mbC}{\mathbb{C}}
\newcommand{\mbP}{\mathbb{P}}
\newcommand{\mbQ}{\mathbb{Q}}
\newcommand{\mbZ}{\mathbb{Z}}
\newcommand{\mcH}{\mathcal{H}}
\newcommand{\mcI}{\mathcal{I}}
\newcommand{\mcL}{\mathcal{L}}
\newcommand{\mcM}{\mathcal{M}}
\newcommand{\mcO}{\mathcal{O}}
\newcommand{\mfp}{\mathfrak{p}}
\newcommand{\msp}{\mathsf{p}}
\newcommand{\inj}{\hookrightarrow}
\newcommand{\ratmap}{\dashrightarrow}
\title[$\mathbb{Q}$-Fano threefolds with three birational Mori fiber structures]{$\mathbb{Q}$-Fano threefolds with three birational Mori fiber structures}
\author[Takuzo Okada]{Takuzo Okada}
\address{Department of Mathematics, Faculty of Science and Engineering\endgraf
Saga University, Saga 849-8502 Japan}
\email{okada@cc.saga-u.ac.jp}
\subjclass[2010]{14E05, 14E07 \and 14J45}
\date{}
\begin{document}

\maketitle

\begin{abstract}
In this paper we give first examples of $\mathbb{Q}$-Fano threefolds whose birational Mori fiber structures consist of exactly three $\mathbb{Q}$-Fano threefolds.
These examples are constructed as weighted hypersurfaces in a specific weighted projective space.
We also observe that the number of birational Mori fiber structures does not behave upper semi-continuously in a family of $\mathbb{Q}$-Fano threefolds.
\end{abstract}

\section{Introduction} \label{sec:intro}

A Mori fiber space which is birational to a given variety is called a {\it birational Mori fiber structure} of the variety.
We say that a $\mbQ$-Fano variety $X$ with Picard number one is {\it birationally rigid} (resp.\ {\it birationally birigid}) if the birational Mori fiber structures of $X$ consist of a single element $X$ (resp.\ exactly two elements including $X$).
There are many birationally rigid $\mbQ$-Fano varieties such as nonsingular hypersurfaces of degree $n+1$ in $\mbP^{n+1}$ for $n \ge 3$ (\cite{IM, dF}) and quasismooth anticanonically embedded $\mbQ$-Fano threefold weighted hypersurfaces (\cite{CPR, CP}).
Compared to birational rigidity, $\mbQ$-Fano varieties with finite birational Mori fiber structures (or with finite pliability) are less known.
Corti-Mella \cite{CM} proved that a quartic threefold with a specific singular point is birationally birigid.
Cheltsov-Grinenko \cite{CG} constructed an example of a birationally birigid complete intersection of a quadric and a cubic in $\mbP^5$ with a single ordinary double point.
In a series of papers \cite{Okada1,Okada2,Okada3}, we proved that $19$ families and $35$ families of $\mbQ$-Fano threefold weighted complete intersections consist of birationally rigid and birationally birigid $\mbQ$-Fano threefolds, respectively (see also \cite{AZ1}).
There are other interesting examples of birationally non-rigid $\mbQ$-Fano threefolds \cite{AK, AZ2, BZ} but their birational Mori fiber structures are yet to be determined.

The aim of this paper is to construct first examples of $\mbQ$-Fano threefolds with exactly three birational Mori fiber structures.
We also observe that the number of birational Mori fiber structures does not behave upper semi-continuously in a family.
The main objects of this paper are weighted hypersurfaces of degree $8$ in the weighted projective space $\mbP (1,1,2,2,3)$.
We explain known results for this family.

\begin{Thm}[{\cite{CP, CPR}}]
A quasismooth weighted hypersurface of degree $8$ in \linebreak $\mbP (1,1,2,2,3)$ is birationally rigid.
\end{Thm}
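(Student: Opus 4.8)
The plan is to run the Noether--Fano--Iskovskikh method, i.e.\ to show that $X$ carries no maximal centre. Concretely, suppose $X$ were not birationally rigid; then there is a movable linear system $\mcM \subset |-nK_X|$ for some $n > 0$ such that the pair $(X, \tfrac1n\mcM)$ is not canonical, together with a geometric valuation $v$ whose centre $\Gamma = c_X(v)$ realises this, i.e.\ $\tfrac1n v(\mcM) > a(v; X)$. First I would record the numerical data: $(-K_X)^3 = \mcO_X(1)^3 = 8/(1\cdot1\cdot2\cdot2\cdot3) = 2/3$, and $\Sing X$ consists of the point $P_z = [0{:}0{:}0{:}0{:}1]$ of type $\tfrac13(1,1,2)$ together with four points $Q_1,\dots,Q_4$ of type $\tfrac12(1,1,1)$, cut out on the line $\{x_0 = x_1 = z = 0\} \cong \mbP(2,2)$ by the defining equation. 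Since $\mcM$ is movable, $\Gamma$ has codimension at least $2$, so $\Gamma$ is either a curve, a nonsingular point, one of the $Q_i$, or $P_z$.

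Next I would dispose of curves and nonsingular points by the standard test-class arguments. If $\Gamma$ is a curve, then for general $M_1, M_2 \in \mcM$ one has $\mult_\Gamma(M_1 \cdot M_2) > n^2$, hence $\tfrac23 n^2 = (-K_X)\cdot M_1 \cdot M_2 > n^2\bigl((-K_X)\cdot\Gamma\bigr)$ and so $(-K_X)\cdot\Gamma < \tfrac23$; an inspection of curves of such small anticanonical degree on a quasismooth $X$ (they are forced to lie in small coordinate strata) rules this out. If $\Gamma$ is a nonsingular point $p$, the $4n^2$-inequality gives $\mult_p(M_1\cdot M_2) > 4n^2$, which is incompatible with $(-K_X)\cdot M_1\cdot M_2 = \tfrac23 n^2$ once one uses the geometry of a suitable pencil through $p$ (coming from the projections off the coordinate points, or from the systems $|-K_X|$, $|-2K_X|$).

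The substantive part is the treatment of the quotient singular points. For $P \in \{Q_1,\dots,Q_4,P_z\}$ of index $r$, let $\sigma\colon Y \to X$ be the Kawamata blow-up of $P$, with exceptional divisor $E$ and $K_Y = \sigma^* K_X + \tfrac1r E$; if $P$ is a maximal centre then $\sigma^*\mcM = \widetilde{\mcM} + mE$ with $m > n/r$. Working on the Picard-rank-two threefold $Y$, I would compute the intersection numbers of $\sigma^*(-K_X)$ and $E$, determine $\bNE(Y)$, and --- whenever possible --- exhibit a nef class $A = \sigma^*(-K_X) - cE$ for which the inequality $0 \le A\cdot\widetilde{\mcM}^2$ contradicts $m > n/r$, thereby excluding $P$ as a maximal centre. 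For those $P$ (this is expected for the index-two points) where $Y$ instead admits a second extremal contraction initiating a Sarkisov link, one follows the link through the intermediate flips and flops to its other end and verifies that it is again (an isomorphic copy of) $X$, so that the link is a birational self-map of $X$ and produces no new Mori fibre space. The hard part is exactly this last analysis: controlling the birational geometry of $Y$ after the Kawamata blow-up --- its cone of curves, the nature of the second extremal contraction, and the chain of flips --- and separating the singular points that fail to be maximal centres from those that are maximal centres but contribute only birational involutions of $X$.
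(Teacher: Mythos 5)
This theorem is not proved in the paper at all: it is quoted from \cite{CPR} and \cite{CP}, and your outline is precisely the strategy of those references (Noether--Fano, exclusion of maximal centres, untwisting at singular points), so the route is the right one, and your numerical data are correct: $(A^3)=2/3$, and every quasismooth member has exactly four $\frac{1}{2}(1,1,1)$ points on the stratum $(x_0=x_1=z=0)$ plus one $\frac{1}{3}(1,1,2)$ point. However, what you have written is a programme rather than a proof: each decisive verification is asserted, not carried out. For curves, ``an inspection\dots rules this out'' hides the genuine case of curves $\Gamma$ with $(A\cdot\Gamma)=1/2$ passing through the index-$2$ points; these do occur on quasismooth members and must be excluded by an actual argument, e.g.\ by bounding $(\Gamma^2)$ on a suitably chosen surface $S\in|kA|$ containing $\Gamma$ and contradicting $\mult_{\Gamma}\mcH>n$ --- exactly the kind of computation this paper has to perform in Section \ref{sec:exclX'} for its special member $X'$. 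For nonsingular points the relevant bound is an isolating class $lA$ with $l\le 4/(A^3)=6$, and one must exhibit such classes for every point, including those on $(x_0=x_1=0)$; your appeal to ``a suitable pencil'' is not yet that.

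The deeper gaps are at the singular points and in the logical framework. You must decide, for each of the five Kawamata blow-ups, whether the point is excluded by a test class or is genuinely a maximal centre, and in the latter case construct the untwisting birational self-map explicitly (for the $\frac{1}{3}(1,1,2)$ point this is the double-cover type involution, cf.\ the construction in Section \ref{sec:links} of this paper); none of this is done in your sketch, and you acknowledge it is ``the hard part''. Moreover there is an internal tension: you open by claiming $X$ carries no maximal centre, yet your treatment of the quotient points concedes that some of them are maximal centres untwisted by involutions. Rigidity then does not follow from ``the link returns to $X$'' alone; one needs the untwisting/decreasing-degree induction of the Sarkisov--Noether--Fano framework. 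Finally, the statement concerns \emph{every} quasismooth member, not a general one, so no step may invoke generality --- this is precisely the content added by \cite{CP} beyond \cite{CPR} --- whereas your sketch tacitly uses general-position behaviour (general pencils, ``inspection'' of low-degree curves). So the approach matches the cited proofs, but as it stands the argument has substantive holes at the curve exclusion, the singular-point analysis, and the untwisting step.
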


\begin{Thm}[{\cite{Okada2}}]
A $\mbQ$-Fano weighted hypersurface of degree $8$ in $\mbP (1,1,2,2,3)$ with a single $cAx/2$ singular point together with some other terminal quotient singular points is birationally birigid.
More precisely, it is birational to a quasismooth $\mbQ$-Fano weighted complete intersection of type $(6,8)$ in $\mbP (1,1,2,3,4,4)$ and it is not birational to any other Mori fiber space.
\end{Thm}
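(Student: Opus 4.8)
Put $A := -K_X = \mcO_X(1)$, the ample generator of $\Cl(X)_{\mbQ}$, so that $A^3 = 8/(1\cdot 1\cdot 2\cdot 2\cdot 3) = 2/3$, and let $\msp \in X$ denote the $cAx/2$ point. The argument runs inside the Sarkisov program. By the Noether--Fano inequality, if $\chi\colon X \ratmap X'$ is a birational map to a Mori fibre space that is not an isomorphism of Mori fibre spaces, then there is a mobile linear system $\mcM \subset |{-n}K_X|$ with $n > 0$ for which $(X,\tfrac1n\mcM)$ is not canonical, hence has a maximal centre $\Gamma \subset X$. It therefore suffices to show: (a) the only maximal centre that can occur on $X$ is the point $\msp$; (b) $\msp$ is extracted by a divisorial extraction initiating a Sarkisov link of type~II $X \ratmap Z$ with target a quasismooth $\mbQ$-Fano weighted complete intersection $Z = Z_{6,8} \subset \mbP(1,1,2,3,4,4)$; and (c) the same analysis applied to $Z$ shows its only maximal centre initiates the inverse link $Z \ratmap X$. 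Granting (a)--(c), untwisting $\chi$ by these links terminates (the Sarkisov degree strictly drops at each step), so every Mori fibre space birational to $X$ is isomorphic to $X$ or to $Z$; that is, $\Pli(X) = \{X, Z\}$ and $X$ is birationally birigid.

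\emph{The link $X \ratmap Z$.} The point $\msp$ admits a divisorial extraction $\varphi\colon \widetilde X \to X$ (a weighted blowup in suitable orbifold coordinates at the $cAx/2$ point; one checks that $\widetilde X$ is terminal and $\mbQ$-factorial with $\rho(\widetilde X/X) = 1$ and that $-K_{\widetilde X}$ is $\varphi$-ample). The second extremal ray of $\widetilde X$ then initiates the $2$-ray game: a composite of antiflips and flops $\widetilde X \ratmap \widetilde Z$, followed by a divisorial contraction $\psi\colon \widetilde Z \to Z$ onto a $\mbQ$-Fano threefold of Picard number one. Tracing through the induced transformation of graded rings --- equivalently, the projection away from $\msp$ and the ensuing unprojection --- one identifies $Z$ with a quasismooth complete intersection of type $(6,8)$ in $\mbP(1,1,2,3,4,4)$, with $(-K_Z)^3 = 48/96 = 1/2$. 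Since $\varphi$ and $\psi$ are extremal divisorial contractions, this is a genuine type~II Sarkisov link, and $Z$ is a second birational Mori fibre structure of $X$.

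\emph{Exclusion of the other centres on $X$.} Suppose $\Gamma$ is a maximal centre of $(X,\tfrac1n\mcM)$. If $\Gamma$ is a curve $C$, then $\mult_C\mcM > n$, so $\tfrac23 n^2 = \mcM^2 \cdot A \ge (\mult_C\mcM)^2\,(A\cdot C) > n^2\,(A\cdot C)$ forces $A\cdot C < 2/3$; the curves on $X$ of anticanonical degree less than $2/3$ form a short explicit list --- chiefly the component of $\Bs|A| = \{x_0 = x_1 = 0\}\cap X$, whose degree is exactly $2/3$, together with curves in the coordinate strata through the singular points --- and each is ruled out by a direct multiplicity estimate. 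If $\Gamma$ is a nonsingular point $P$, Corti's $4n^2$-inequality gives $\mult_P(\mcM^2) > 4n^2$; choosing a suitable test surface through $P$ --- a general member of $|A|$, or of $|2A|$ when $P$ lies on $\Bs|A|$ --- and bounding $\mcM^2 \cdot A = \tfrac23 n^2$ below by the local contribution at $P$ yields a contradiction. If $\Gamma$ is a terminal cyclic quotient point $Q$ (the $\tfrac13(1,1,2)$ point, or one of the $\tfrac12(1,1,1)$ points), one uses the Kawamata blowup $\varphi_Q\colon \widetilde X_Q \to X$: either the intersection-theoretic constraints on $\widetilde X_Q$ forced by $Q$ being a maximal centre are contradictory, or the $2$-ray game on $\varphi_Q$ fails to end in a Mori fibre space birational to $X$ and distinct from it, so that composing $\chi$ with the resulting self-link would decrease $n$, contradicting its minimality. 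Either way $Q$ is excluded, and hence $\Gamma = \msp$, which proves (a).

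\emph{Rigidity of $Z$ and the main obstacle.} For (c) one repeats the maximal-centre analysis on the quasismooth $Z = Z_{6,8}$: its low-degree curves and nonsingular points are excluded by the same numerical inputs (now with $(-K_Z)^3 = 1/2$), and each of its terminal quotient points is either excluded or untwisted, leaving as the only surviving centre the one initiating $Z \ratmap X$. This establishes (a)--(c) and hence the theorem. The main difficulty is twofold. First, because $X$ is \emph{not} quasismooth, the test-surface and Riemann--Roch estimates used to exclude curves, nonsingular points and the quotient singularities must be carried out afresh for this particular degenerate member rather than quoted from the quasismooth case; one must in particular verify that no new low-degree curve passes through $\msp$ and that the $2$-ray games on the Kawamata blowups behave as before. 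Second --- and this is the geometric heart of (b) --- one must pin down precisely the divisorial extraction over the $cAx/2$ point and its $2$-ray game: the exact weights, the sequence of antiflips and flops, and the identification of the terminal model with the $(6,8)$ complete intersection in $\mbP(1,1,2,3,4,4)$.
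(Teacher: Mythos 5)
This statement is not actually proved in the present paper: it is quoted from \cite{Okada2}, and what the paper proves with the same machinery is the analogue (Theorems \ref{mainthm} and \ref{mainthm2}) for the further-degenerate members with two $cAx/2$ points. Your outline --- Noether--Fano and maximal centres, exclusion of curves and nonsingular points, untwisting at the quotient points, and a type II link over the $cAx/2$ point to a $(6,8)$ complete intersection in $\mbP(1,1,2,3,4,4)$ --- is exactly the method of \cite{Okada2} and of Sections 3--5 here, and your numerics ($(-K_X)^3=2/3$, $(-K_Z)^3=1/2$) are correct. So the approach matches; the problem is that what you have written is a roadmap in which the decisive verifications are deferred, and some of them are not routine.

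Concretely: (1) you never address \emph{uniqueness} of the extremal divisorial extraction over the $cAx/2$ point. Unlike terminal quotient points (\cite{Kaw96}), a $cAx/2$ point may a priori admit several divisorial extractions (\cite{Ha99,Ka05}); without knowing that the extraction initiating your link is the only one, establishing (a)--(c) does not bound the set of Mori fibre structures by $\{X,Z\}$. The paper handles this by writing the germ as $y^2+z^2+g(x_0,x_1)=0$ in $\mbA^4/\mbZ_2$ and checking that the lowest-degree part $f_6$ is not a square (via quasismoothness of $(a_6=0)\subset\mbP(1,1,3)$), then invoking the classification. (2) Your link construction asserts but does not verify that the two anticanonical contractions flanking the midpoint are small (flops) rather than divisorial; in the paper this reduces to showing $a_6$ and $c_8$ have no common component, again via quasismoothness of $(a_6=0)$, and if it failed the point would instead be excluded as a maximal centre (as happens in the proportional case for $X_1,X_2$ here). (3) The exclusion steps cannot simply be quoted from the quasismooth case \cite{CPR} because $X$ is not quasismooth: e.g.\ the degree-$1/2$ curves through the $cAx/2$ point require a bespoke test surface in $|3A|$, a quasismoothness-along-the-curve lemma (Lemma \ref{lem:qsmcurve}), and the estimate $(\Gamma^2)\le -3/2$ on the weighted blowup (Lemma \ref{gammaselfint}); your ``direct multiplicity estimate'' and $4n^2$-inequality sketch passes over precisely this work. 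Until these three items are supplied, the proposal is a correct strategy but not a proof.
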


We consider further special members that admit two $cAx/2$ singular points and determine the birational Mori fiber structures of them.
We state the main theorem of this paper.
In the statement, $\mbP (1,1,2,2,3)$ (resp.\ $\mbP (1,1,2,3,4,4)$) is the weighted projective space with homogeneous coordinates $x_0,x_1,y_0,y_1$ and $z$ of degree respectively $1,1,2,2$ and $3$ (resp.\ $x_0,x_1,y,z,s_0$ and $s_1$ of degree respectively $1,1,2,3,4$ and $4$).

\begin{Thm} \label{mainthm}
Let $X'$ be a $\mbQ$-Fano weighted hypersurface
\[
X' = (y_0^2 y_1^2 + y_0 a_6 + y_1 b_6 + c_8 = 0) \subset \mbP (1,1,2,2,3),
\]
where $a_6, b_6, c_8 \in \mbC [x_0,x_1,z]$ are homogeneous polynomials of degree respectively $6$, $6$, $8$.
Then $X'$ is birational to $\mbQ$-Fano weighted complete intersections
\[
X_1 = (s_0 y + s_1 y + a_6 = s_0 s_1 - y b_6 - c_8 = 0) \subset \mbP (1,1,2,3,4,4),
\]
and
\[
X_2 = (s_0 y + s_1 y + b_6 = s_0 s_1 - y a_6 - c_8 = 0) \subset \mbP (1,1,2,3,4,4),
\]
and not birational to any other Mori fiber space.
Moreover we have the following.
\begin{enumerate}
\item If $(a_6,b_6,c_8)$ is asymmetric $($see \emph{Definition \ref{def:sym}}$)$, then $X_1$ is not isomorphic to $X_2$ and the birational Mori fiber structures of $X'$ consist of three $\mbQ$-Fano threefolds $X', X_1$ and $X_2$.
\item If $(a_6,b_6,c_8)$ is symmetric, then $X_1$ is isomorphic to $X_2$ and the birational Mori fiber structures of $X'$ consist of two $\mbQ$-Fano threefolds $X'$ and $X_1 \cong X_2$.
\end{enumerate}
\end{Thm}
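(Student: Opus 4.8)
The plan is to establish the birational links explicitly, then run the "no other Mori fiber space" argument via the Sarkisov program / maximal singularities method, and finally analyse the isomorphism question combinatorially. First I would exhibit the birational map $X' \ratmap X_1$. The weighted hypersurface $X'$ carries the $cAx/2$ singular point lying on the $y_0$-axis (where $y_0$ is eliminable); blowing up this point and running the resulting Sarkisov link should produce $X_1$, with the new coordinates $s_0, s_1$ of degree $4$ arising as the Cox-ring generators of the blow-up and the two equations of $X_1$ being the strict transforms of the Newton-expansion of the octic. Concretely, on $X'$ one has $y_0^2 y_1^2 + y_0 a_6 + y_1 b_6 + c_8 = 0$; introducing $s_0, s_1$ with $s_0 = y_0 y_1 + (\text{something})$ and $s_1 = $ its "dual", the relations $s_0 y + s_1 y + a_6 = 0$ and $s_0 s_1 - y b_6 - c_8 = 0$ drop out after setting $y = y_1$ and eliminating $y_0$. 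The map $X' \ratmap X_2$ is obtained identically by blowing up the \emph{other} $cAx/2$ point on the $y_1$-axis, which is why the roles of $a_6$ and $b_6$ are swapped. One must check that $X_1$ and $X_2$ are indeed quasismooth $\mbQ$-Fano weighted complete intersections of type $(6,8)$ in $\mbP(1,1,2,3,4,4)$ — this is the family appearing in Theorem [Okada2] — so that the previously quoted results apply to them.

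Next, for the statement that $X'$ is birational to \emph{no other} Mori fiber space, I would invoke the machinery already developed in \cite{Okada1, Okada2}: decompose any birational map from $X'$ to a Mori fiber space into Sarkisov links and show that the only maximal centres are the two $cAx/2$ points, each untwisted precisely by the link to $X_i$. The key technical input is the exclusion of all other potential maximal centres — the terminal quotient singularities of types $\frac12(1,1,1)$ and $\frac13(1,1,2)$, the curves and the nonsingular points — by the usual test-class and multiplicity inequalities (the $4n^2$-inequality at smooth points, the Corti-type inequalities at quotient singularities, and the adjunction/intersection estimates along curves). Since the general member of the degree-$8$ family in $\mbP(1,1,2,2,3)$ is birationally rigid by \cite{CP, CPR}, and the member with a single $cAx/2$ point is birationally birigid by \cite{Okada2}, the present $X'$ should be handled by the same exclusions; the new feature is simply that there are now two $cAx/2$ centres instead of one, and the link at each behaves exactly as in the birigid case. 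Then the composite $X_1 \ratmap X' \ratmap X_2$ is shown to be \emph{not} an isomorphism in general (this is where asymmetry enters), completing the pliability count: $\Pli(X') = \{X', X_1, X_2\}$ or $\{X', X_1\cong X_2\}$.

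Finally, for parts (1) and (2), I would analyse $X_1 \cong X_2$ directly. An isomorphism of quasismooth weighted complete intersections in $\mbP(1,1,2,3,4,4)$ must be induced by a coordinate automorphism of the weighted projective space (after checking that $\Aut$ acts through its linear part on this range of weights); the relevant freedom is the $x_0 \leftrightarrow x_1$ and $s_0 \leftrightarrow s_1$ swaps together with scalings and the $s_i \mapsto s_i + (\text{quartic in } x_0, x_1) + \lambda y^2$ shears. Comparing the two defining ideals, $X_1$ and $X_2$ are exchanged precisely by the involution swapping $a_6 \leftrightarrow b_6$, so $X_1 \cong X_2$ iff there is an automorphism of $\mbC[x_0,x_1,z]$-data carrying the triple $(a_6, b_6, c_8)$ to $(b_6, a_6, c_8)$ up to the allowed shears — which is exactly the content of Definition \ref{def:sym} (symmetric). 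Thus asymmetric $(a_6, b_6, c_8)$ forces $X_1 \not\cong X_2$, giving three structures, and symmetric gives two. The main obstacle I anticipate is the exclusion step: verifying that no curve or extra singular point on the rather special $X'$ becomes a maximal centre, since the added degeneration (two $cAx/2$ points) could a priori create new low-degree curves through the singular locus that must be ruled out by hand; the isomorphism analysis, by contrast, is a finite and essentially linear-algebra computation once the structure of $\Aut \mbP(1,1,2,3,4,4)$ is pinned down.
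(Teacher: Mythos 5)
Your construction of the two links $X' \ratmap X_1$, $X' \ratmap X_2$ at the two $cAx/2$ points and your treatment of the isomorphism question (showing any isomorphism $X_1 \cong X_2$ is induced by a coordinate automorphism and reduces to the symmetry of $(a_6,b_6,c_8)$) match the paper's route. But there is a genuine gap in your exclusion plan: you propose to \emph{exclude} the terminal quotient singularities as maximal centres ``by Corti-type inequalities''. This cannot work. The $\frac{1}{3}(1,1,2)$ point $\msp'_3$ of $X'$ admits a genuinely birational (non-biregular) involution which is a Sarkisov link (obtained by passing to the double cover $Z' \to \mbP(1,1,2,2)$ and interchanging its sheets), so $\msp'_3$ \emph{is} a maximal centre for suitable linear systems and must be untwisted, not excluded; likewise the two $\frac{1}{4}(1,1,3)$ points of each $X_i$ are the centres of the inverse links $X_i \ratmap X'$. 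Without constructing these untwisting links the Sarkisov-program bookkeeping does not close, and one cannot conclude that the birational Mori fibre structures are exactly $\{X', X_1, X_2\}$ (or $\{X', X_1 \cong X_2\}$).

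A second gap: you treat $X_1$ and $X_2$ as members of the quasismooth $(6,8)$ family in $\mbP(1,1,2,3,4,4)$ so that ``the previously quoted results apply to them''. They are not quasismooth --- each has a $cAx/2$ point at $\msp_3 = (0\!:\!0\!:\!1\!:\!0\!:\!0\!:\!0)$ --- so the rigidity/birigidity results for the quasismooth family do not apply, and the whole analysis of maximal centres must be redone on $X_1$ and $X_2$: exclusion of nonsingular points and curves there, the links at the $\frac{1}{4}(1,1,3)$ points, and above all the $cAx/2$ point $\msp_3$, which your proposal never addresses. The paper shows that this point is either not a maximal centre (when $a_6$ is proportional to $b_6$) or is untwisted by a direct flop-type link $X_1 \ratmap X_2$ through the hypersurface $((s_0+s_1)(s_0s_1-c_8)+a_6b_6=0) \subset \mbP(1,1,3,4,4)$; some such dichotomy is indispensable, since an unanalysed divisorial extraction from $\msp_3$ could a priori lead to a Mori fibre space outside your list. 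Finally, note that uniqueness of the divisorial extraction at each $cAx/2$ point (via the Hayakawa--Kawakita classification, using that $a_6$, $b_6$ define quasismooth curves in $\mbP(1,1,3)$) is also needed to identify every link starting from these points with the ones you construct; this verification is missing from your outline.
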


In the above theorem, the members $X'$ with the property (1) are more general than those with the property (2).
We observe through the above theorems that the number of birational Mori fiber structures increases as we specialize $\mbQ$-Fano threefolds in a family except for the final specialization from (1) to (2) in Theorem \ref{mainthm} where the number decreases.
Therefore the number of birational Mori fiber structures does not behave upper semi-continuously in a family. 
A similar observation is also given in \cite{CG}. 

\begin{Ack}
The author would like to thank Dr.~Hamid Ahmadinezhad for valuable comments.
He also would like to thank Professor Takashi Kishimoto for warm encouragement.
He is grateful to the referees for numerous constructive suggestions.
The author is partially supported by JSPS Grant-in-Aid for Young Scientists No.~26800019.
\end{Ack}

\section{Maximal and Sarkisov extractions} \label{sec:maxsing}

Notion of maximal singularities for Fano varieties firstly appeared in \cite{IM} and was developed, applied by Iskovskikh, Pukhlikov, Cheltsov, Park, etc (see \cite{Puk} for details).
The recent result of de Fernex \cite{dF} brought a new idea to this subject.
A version of maximal singularity was introduced by Corti in his study \cite{Corti1} of Sarkisov program and  was applied in \cite{CPR}.

We recall the definition of maximal extraction and center which are due to Corti and define a version of them.
Throughout this section, let $X$ be a $\mbQ$-Fano variety with Picard number $1$.
By a $\mbQ$-Fano variety, we mean a normal projective $\mbQ$-factorial variety with only terminal singularities whose anticanonical divisor is ample.
By a {\it divisorial extraction} $\varphi \colon Y \to X$, we mean an extremal divisorial extraction in the Mori category.
We sometimes write $\varphi \colon (E \subset Y) \to (\Gamma \subset X)$, which means that $E$ is the exceptional divisor of the extraction $\varphi$ and $\Gamma = \varphi (E)$ is the center of $\varphi$.

\begin{Def} \label{def:max}
A divisorial extraction $\varphi \colon (E \subset Y) \to (\Gamma \subset X)$ is called a {\it strong maximal extraction} (resp.\ {\it weak maximal extraction}) if there is a movable linear system $\mcH \sim_{\mbQ} - n K_X$ on $X$ such that the inequality and equality
\[
\frac{1}{n} > c (X,\mcH) = \frac{a_E (K_X)}{\mult_E (\mcH)} \quad\left(\text{resp. } \frac{1}{n} > \frac{a_E (K_X)}{\mult_E (\mcH)} \right)
\]
hold, where $a_E (K_X)$ is the discrepancy of $K_X$ along $E$, $\mult_E (\mcH)$ is the multiplicity of $\mcH$ along $E$ and
\[
c (X,\mcH) := \max \{ \lambda \mid K_X + \lambda \mcH \text{ is canonical} \}
\]
is the canonical threshold of the pair $(X,\mcH)$.
The center $\Gamma$ of a strong (resp.\ weak) maximal extraction is called a {\it strong} (resp.\ {\it weak}) {\it maximal center}.
\end{Def}

A strong maximal extraction is called a maximal extraction in \cite{Corti1}.
We emphasize that weak maximal center is also defined in \cite{CPR} but the definition given there is different from ours.
A maximal singularity in the original sense (introduced by Iskovskikh and Manin) is an exceptional divisor $E$ (not necessarily an exceptional divisor of a divisorial extraction) over $X$ such that there is a movable linear system $\mcH \sim_{\mbQ} - n K_X$ satisfying $\mult_E (\mcH) > n a_E (K_X)$.
It follows that the exceptional divisor of a weak maximal extraction is a maximal singularity in the original sense.

\begin{Rem}
As far as the author knows, notion of weak maximal extraction has never appeared in the literature (although it is just a weaker version of strong maximal extraction), so there will be no confusion.
However, a weak maximal center is also defined in \cite{CPR} and the definition is different from ours: a weak maximal center in our sense is the center of a weak maximal extraction while a weak maximal center in \cite{CPR} is the center of a maximal singularity in the original sense.
We emphasize that a weak maximal center in this paper is always the one given in Definition \ref{def:max}.
\end{Rem}

\begin{Def}
A Sarkisov link $\sigma \colon V \ratmap V'$ between Mori fiber spaces $V/S$ and $V'/S'$ is a birational map that sits in the commutative diagram
\[
\xymatrix{
W \ar[d]_{\varphi} \ar@{-->}[r]^{\tau} & W' \ar[d]^{\varphi'} \\
V \ar@{-->}[r]_{\sigma} & V'}
\]
where each of $\varphi$ and $\varphi'$ is either an identity or a divisorial extraction and $\tau$ is either an identity or a composite of inverse flips, flops and flips.
In the case where $\varphi$ (resp.\ $\varphi'$) is a divisorial extraction, we say that the link $\sigma$ {\it starts} (resp.\ {\it ends}) with the divisorial extraction $\varphi$ (resp.\ $\varphi'$). 
\end{Def}

Note that, for a $\mbQ$-Fano variety $X$ with Picard number $1$, any Sarkisov link $X \ratmap X'/S'$ to a Mori fiber space $X'/S'$ starts with a divisorial extraction.
Note also that a Sarkisov link starting with a given divisorial extraction $\varphi \colon Y \to X$ is unique if it exists.

\begin{Def}
A divisorial extraction $\varphi \colon Y \to X$ is called a {\it Sarkisov extraction} if there is a Sarkisov link starting with $\varphi$.
The center on $X$ of a Sarkisov extraction is called a {\it Sarkisov center}.
\end{Def}

\begin{Lem} \label{lem:maxweakS}
For a divisorial contraction $\varphi \colon (E \subset Y) \to (\Gamma \subset X)$, we have the following implications.
\begin{enumerate}
\item If $\varphi$ is a strong maximal extraction, then it is a Sarkisov extraction.
\item If $\varphi$ is a Sarkisov extraction, then it is a weak maximal extraction.
\end{enumerate}
\end{Lem} 

\begin{proof}
The assertion (1) follows from \cite{Corti1} (see the proof of (5.4) Theorem therein).
We prove (2).
The following proof may be straightforward for specialists but we include it for readers' convenience.
Suppose that $\varphi$ is a Sarkisov extraction and let $\sigma \colon X \ratmap X'/S'$ be the Sarkisov link starting with $\varphi$.
If $X'$ is a $\mbQ$-Fano variety with Picard number $1$, then we have the following commutative diagram
\[
\xymatrix{
Y \ar[d]_{\varphi} \ar@{-->}[r]^{\tau} & Y' \ar[d]^{\psi} \\
X \ar@{-->}[r]_{\sigma} & X',}
\]
and otherwise we have the commutative diagram
\[
\xymatrix{
Y \ar[d]_{\varphi} \ar@{-->}[rd]^{\tau} & \\
X \ar@{-->}[r]_{\sigma} & X',}
\]
where $\psi \colon (E' \subset Y') \to (\Gamma' \subset X')$ is an extremal divisorial extraction and $\tau$ is a small birational map.
Let $V$ be a nonsingular projective variety that admit birational morphisms $p \colon V \to X$ and $q \colon V \to X'$ such that $q = \sigma \circ p$.
We assume that $p$ factors through $Y$ and that $q$ factors through $Y'$ if $X'$ is a $\mbQ$-Fano variety with Picard number $1$. 
By a slight abuse of notation, we denote by $E$ and $E'$ the proper transforms of $E$ and $E'$ on $V$, respectively.
Then, since $\tau$ is an isomorphism in codimension one, $E$ (respectively, $E'$) is the unique $p$-exceptional divisor (respectively, $q$-exceptional divisor) that is not $q$-exceptional (respectively, $p$-exceptional).
Let $H'$ be a very ample divisor on $X'$ and let $n'$ be the rational number such that $H' \sim_{\mbQ, S'} - n' K_{X'}$.
We set $\mcH' := |H'|$ and let $\mcH$ be the birational transform of $\mcH'$ on $X$.
Let $n$ be the rational number such that $\mcH \sim_{\mbQ} - n K_X$.
By the Noether--Fano--Iskovskikh inequality (see \cite[(4.2) Theorem]{Corti1}), we have $n > n'$.
We have
\[
\begin{split}
K_V + \frac{1}{n} \mcH_V &= q^* \left( K_{X'} + \frac{1}{n} \mcH' \right) + a' E' + G', \\
&= p^* \left( K_X + \frac{1}{n} \mcH \right) + a E + G,
\end{split}
\]
where $a, a' \in \mbQ$, $G$ and $G'$ are both $p$- and $q$-exceptional divisors and $\mcH_V = q^* \mcH'$.
Note that 
\[
K_{X'} + \frac{1}{n} \mcH' \sim_{\mbQ, S'}  \frac{n' - n}{n} (-K_{X'})
\]
is relatively anti-ample over $S'$ and $K_X + \frac{1}{n} \mcH \sim_{\mbQ} 0$.
Take a sufficiently general curve $C' \subset X'$ that is contracted by $X' \to S'$.
We may assume that $C'$ is disjoint from the image of any $q$-exceptional divisor.
We denote by the same symbol $C'$ its inverse image on $V$.
Then we have $(E' \cdot C') = (G' \cdot C') = (G \cdot C') = 0$ and thus
\[
a (E \cdot C') = \left( K_{X'} + \frac{1}{n} \mcH' \cdot C' \right) < 0.
\]
Since $C'$ is not contained in $E$, the above inequality shows that $(E \cdot C') > 0$ and $a < 0$.
Therefore $\varphi$ is a weak maximal extraction.
\end{proof}

In this paper, we employ the definition of weak maximal extraction and center as follows.

\begin{Def}
A weak maximal extraction and a weak maximal center are called a {\it maximal extraction} and a {\it maximal center}, respectively.
\end{Def} 

An advantage of employing this definition is that the exclusion of a divisorial extraction $\varphi$ as a maximal center immediately implies that of $\varphi$ as a Sarkisov extraction by Lemma \ref{lem:maxweakS}, which enables us to classify Sarkisov links between $\mbQ$-Fano varieties with Picard number $1$.

\begin{Rem}
The bad link method introduced in \cite[Section 5.5]{CPR} excludes a divisorial extraction as a Sarkisov extraction (but not necessarily as a weak maximal extraction).
The approach of the recent paper \cite{AZ1} by Ahmadinezhad and Zucconi can be thought of as a generalization of the bad link method.
It is important to mention that, all the exclusion methods appeared in the literature so far, except for the ones based on the bad link methods explained above, exclude extractions and centers not only as strong maximal ones but also as weak maximal ones. 
\end{Rem}

\section{Preliminaries} \label{sec:prelim}

The aim of this section is to study basic properties of the main objects $X'$, $X_1$ and $X_2$.

\subsection{Quasismoothness}

Let $\mbP = \mbP (a_0,\dots,a_n)$ be a weighted projective space with homogeneous coordinates $x_0,\dots,x_n$. 
We assume that $\mbP$ is well-formed, that is, $\gcd (a_0,\dots,\hat{a}_i,\dots,a_n) = 1$ for each $i$, and let $X$ be a closed subvariety of $\mbP$.
For a non-empty subset $I = \{ i_0,\dots,i_k \}$ of $\{0,\dots,n\}$, we define 
\[
\Pi^{\circ}_I =  \left(\bigcap\nolimits_{i \in I} (x_i \ne 0) \right) \cap \left( \bigcap\nolimits_{j \notin I} (x_j = 0) \right) \subset \mbP 
\]
and call it a {\it coordinate stratum} of $\mbP$ with respect to $I$.
For a $(k+1)$-tuple of non-negative integers $m = (m_0,\dots,m_k)$, we write 
\[
x_I^m = x_{i_0}^{m_0} \cdots x_{i_k}^{m_k}.
\]
\begin{Def}
Let $X$ be a closed subscheme of $\mbP$ and $p \colon \mbA^{n+1} \setminus \{0\} \to \mbP$ the natural projection.
We say that $X$ is {\it quasismooth} if the affine cone $C_X \subset \mbA^{n+1}$ of $X$, which is the closure of $p^{-1} (X)$ in $\mbA^{n+1}$, is smooth outside the origin.
For a non-empty subset $I \subset \{0,\dots,n\}$, we say that $X$ is {\it quasismooth along} $\Pi^{\circ}_I$ if $C_X$ is smooth along $p^{-1} (\Pi^{\circ}_I)$.  
\end{Def}

It follows from the definition that a closed subscheme $X \subset \mbP$ is quasismooth if and only if $X$ is quasismooth along $\Pi^{\circ}_I$ for any non-empty subset $I \subset \{0,\dots,n\}$.

\begin{Def}
Let $M$ be a set of monomials of degree $d$.
We denote by $\Lambda (M)$ the linear system on $\mbP$ spanned by elements in $M$.
Let $M_1$ and $M_2$ be sets of monomials of degree respectively $d_1$ and $d_2$.
We define 
\[
\Lambda (M_1, M_2) = \{ X_1 \cap X_2 \subset \mbP \mid X_1 \in \Lambda (M_1), X_2 \in  \Lambda (M_2) \},
\]
which is the family of weighted complete intersections of type $(d_1, d_2)$ defined as the scheme-theoretic intersection of weighted hypersurfaces in $\Lambda (M_1)$ and $\Lambda (M_2)$.
\end{Def}

We re-state the results of \cite{IF00} on quasismoothness of weighted complete intersections in a generalized form.
Although the statements are slightly different from the original ones, proofs are completely parallel.
More precisely, the proofs can be done by replacing complete linear systems of degree $d$, $d_1$, $d_2$ with linear systems $\Lambda (M)$, $\Lambda (M_1)$, $\Lambda (M_2)$, respectively, in the proofs of the corresponding theorems in \cite{IF00}.
A weighted hypersurface of degree $d$ is said to be a {\it linear cone} if  its defining polynomial $f$ can be written as $f = \alpha x_i + (\text{other terms})$ for some $i$ and non-zero $\alpha \in \mbC$. 

\begin{Thm}[{cf.\ \cite[8.1 Theorem]{IF00}}] \label{qsmcriwh}
Let $I = \{i_0,\dots,i_{k-1}\}$ be a non-empty subset of $\{0,\dots,n\}$ and $M$ a set of monomials of degree $d$.
A general weighted hypersurface in $\Lambda (M)$ which is not a linear cone is quasismooth along $\Pi^{\circ}_I$ if and only if one of the following assertions hold.
\begin{enumerate}
\item There exists a monomial $x_I^m = x_{i_0}^{m_0} \cdots x_{i_{k-1}}^{m_{k-1}} \in M$.
\item For $\mu = 1,\dots,k$, there exist monomials
\[
x_I^{m_{\mu}} x_{e_{\mu}} = x_{i_0}^{m_{0,\mu}} \cdots x_{i_{k-1}}^{m_{k-1,\mu}} x_{e_{\mu}} \in M,
\]
where $\{e_{\mu}\}$ are $k$ distinct elements.
\end{enumerate}
\end{Thm}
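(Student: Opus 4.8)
The plan is to argue one stratum at a time, so fix the subset $I=\{i_0,\dots,i_{k-1}\}$. Let $W$ be the vector space of polynomials spanned by $M$, so that $\Lambda (M)=\mbP (W)$, and write $U_I = p^{-1}(\Pi^{\circ}_I)=\{x_i\neq 0\ (i\in I),\ x_j=0\ (j\notin I)\}\cong (\mbC^{*})^{k}$. For $f\in W$ the affine cone $C_X=(f=0)$ is singular at a point $P\in U_I$ precisely when $\partial f/\partial x_l(P)=0$ for all $l$: indeed, by the Euler relation $\sum_l a_l x_l\,\partial f/\partial x_l = d f$ the vanishing of all partials already forces $f(P)=0$. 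Thus the task reduces to showing that for general $f$ there is no $P\in U_I$ with $\nabla f(P)=0$.

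The first step is a bookkeeping computation of which monomials of $M$ survive in $\partial f/\partial x_l(P)$ when $P\in U_I$. Since $x_j(P)=0$ for $j\notin I$, only two kinds of monomials contribute: for $i\in I$ the value $\partial f/\partial x_i(P)$ depends solely on the pure monomials $x_I^{m}\in M$, whereas for $e\notin I$ the value $\partial f/\partial x_e(P)$ depends solely on the monomials $x_I^{m}x_e\in M$ that are linear in the single extra variable $x_e$. In particular the linear functionals on $W$ attached to distinct indices involve pairwise disjoint sets of coefficients.

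Under assumption (1) there is a pure monomial $x_I^{m}\in M$. Restriction to $U_I$ annihilates every non-pure monomial, so the restricted linear system $\Lambda (M)|_{U_I}$ is spanned by the $x_I^{m'}|_{U_I}$ and contains the nowhere-vanishing function $x_I^{m}|_{U_I}$; hence it is base-point free on $U_I$. A general member of $\Lambda (M)$ restricts to a general member of this system, so Bertini's theorem makes $(f=0)\cap U_I$ smooth in $U_I$; equivalently some $I$-partial of $f$ is nonzero at each of its points, so $\nabla f\neq 0$ there and $X$ is quasismooth along $\Pi^{\circ}_I$. Under assumption (2) I instead count dimensions on the incidence variety $\mcZ=\{(f,P)\in W\times U_I : \nabla f(P)=0\}$ with its two projections. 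For each $\mu$ the functional $f\mapsto \partial f/\partial x_{e_\mu}(P)$ acquires a nonzero contribution from its own monomial $x_I^{m_\mu}x_{e_\mu}$, and distinctness of the $e_\mu$ forces the $k$ resulting functionals on $W$ to be independent (their matrix contains a $k\times k$ minor of nonzero determinant); hence the fibre of $\mcZ\to U_I$ over every $P$ has codimension at least $k=\dim U_I$, giving $\dim\mcZ\le \dim W$.

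The decisive input, and the place that needs care, is homogeneity. Because each $\partial f/\partial x_l$ is weighted homogeneous, $\nabla f(P)=0$ forces $\nabla f$ to vanish along the whole one-dimensional weighted orbit of $P$; hence every nonempty fibre of $\mcZ\to W$ has dimension at least one, so its image has dimension at most $\dim W-1$ and is a proper cone in $W$. A general $f$ therefore avoids it, and $X$ is again quasismooth along $\Pi^{\circ}_I$. I expect the main obstacle to be exactly this borderline count in case (2): the naive estimate yields codimension $k$ against $\dim U_I=k$, so the argument closes only because of the Euler/orbit dimension drop, and one must verify that the codimension bound holds at \emph{every} $P\in U_I$ so that the special substrata cannot inflate $\dim\mcZ$. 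The exclusion of linear cones and the passage from one general member to the generic member are routine and follow exactly as in \cite{IF00}.
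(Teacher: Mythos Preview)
Your argument is correct and is precisely the proof the paper has in mind: the paper does not give its own argument but simply observes that the proof of \cite[8.1 Theorem]{IF00} goes through verbatim after replacing the complete linear system $|\mcO_{\mbP}(d)|$ by $\Lambda(M)$, and your Bertini treatment of case (1) together with the incidence-variety dimension count in case (2), closed off by the $\mbC^*$-orbit drop, is exactly that proof. One tiny remark: in case (2) you may assume $e_\mu\notin I$ for every $\mu$, since if some $e_\mu\in I$ the monomial $x_I^{m_\mu}x_{e_\mu}$ is already pure and case (1) applies; with that reduction your diagonal-minor independence claim is immediate.
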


\begin{Thm}[{cf. \cite[8.7 Theorem]{IF00}}] \label{qsmcriwci}
Let $I = \{i_0,\dots,i_{k-1}\}$ be a non-empty subset of $\{0,\dots,n\}$ and $M_1$, $M_2$ sets of monomials of degree $d_1$, $d_2$, respectively.
A general weighted complete intersection in $\Lambda (M_1, M_2)$ which is not the intersection of a linear cone with another hypersurface is quasismooth along $\Pi^{\circ}_I$ if and only if one of the following assertions hold.
\begin{enumerate}
\item There exist monomials $x_I^{m_1} \in M_1$ and $x_I^{m_2} \in M_2$.
\item There exists a monomial $x_I^m \in M_1$, and for $\mu = 1,\dots,k-1$ there exist monomials $x_I^{m_{\mu}} x_{e_{\mu}} \in M_2$, where $\{e_{\mu}\}$ are $k-1$ distinct elements. 
\item There exists a monomial $x_I^m \in M_2$, and for $\mu = 1,\dots,k-1$ there exist monomials $x_I^{m_{\mu}} x_{e_{\mu}} \in M_1$, where $\{e_{\mu}\}$ are $k-1$ distinct elements. 
\item For $\mu = 1,\dots,k$, there exist monomials $x_I^{m^1_{\mu}} x_{e^1_{\mu}} \in M_1$, and $x_I^{m^2_{\mu}} x_{e^2_{\mu}} \in M_2$, such that $\{e^1_{\mu}\}$ are $k$-distinct elements, $\{e^2_{\mu}\}$ are $k$ distinct elements and $\{e^1_{\mu}, e^2_{\mu}\}$ contains at least $k+1$ distinct elements.
\end{enumerate}
\end{Thm}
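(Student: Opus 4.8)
The plan is to follow the proofs of \cite[8.1, 8.7]{IF00} almost verbatim, the only genuinely new point being to verify that the specific monomials those proofs consume are supplied by the combinatorial hypotheses (1)--(4). Recall that quasismoothness of $X = X_1 \cap X_2$ along $\Pi^\circ_I$ means that the affine cone $C_X$ is smooth along $W := p^{-1}(\Pi^\circ_I)$, i.e.\ that at every point $q$ of $C_X \cap W$ the Jacobian $\left( \partial F_j/\partial x_\ell \right)_{j=1,2;\,0\le \ell \le n}$ has rank $2$, where $F_1 \in \Lambda(M_1)$ and $F_2 \in \Lambda(M_2)$ are the defining equations. After discarding the excluded case in which one factor is a linear cone, I would introduce the incidence variety
\[
\mcB = \{ (F_1,F_2,q) \in \Lambda(M_1) \times \Lambda(M_2) \times W \mid F_1(q)=F_2(q)=0,\ \rank \left( \partial F_j/\partial x_\ell \right)(q) \le 1 \},
\]
and show that it does not dominate $\Lambda(M_1)\times\Lambda(M_2)$, equivalently that the fibre of $\mcB$ over a general pair $(F_1,F_2)$ is empty. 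Since $W \cong (\mbC^*)^{k}$ with $k=|I|$ is a single orbit of the torus $(\mbC^*)^{n+1}$ and $\mcB$ is equivariant, it suffices to analyse the fibre of $\mcB$ over one base point $q^0$ whose $I$-coordinates are all equal to $1$.

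The computation then becomes purely combinatorial through the dictionary between derivatives and monomials at $q^0$: a monomial of $M_j$ contributes to $\partial F_j/\partial x_\ell (q^0)$ only if either $\ell \in I$ and the monomial is supported on $I$ (a ``pure-$I$'' monomial $x_I^m$), or $\ell \notin I$ and the monomial has the form $x_\ell\, x_I^m$ (``one-external''). Moreover, the Euler relation $\sum_\ell a_\ell x_\ell\, \partial F_j/\partial x_\ell = d_j F_j$ evaluated on $C_X\cap W$ imposes one linear dependence among the $I$-columns of each gradient row. Thus proving $\rank = 2$ for general $(F_1,F_2)$ reduces to exhibiting, among the monomials guaranteed to lie in $M_1,M_2$, enough pure-$I$ and one-external monomials to make a $2\times 2$ minor of the Jacobian a nonzero polynomial in the free coefficients of those monomials. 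This is exactly what (1)--(4) provide: (1) supplies a pure-$I$ monomial in each factor, so the rows are independent along the $I$-columns once the single Euler dependence is accounted for; (2) and (3) supply a pure-$I$ monomial in one factor together with $k-1$ one-external monomials with distinct external variables in the other, producing a minor built from the surviving $I$-column and those external columns; and (4) supplies $k$ one-external monomials in each factor with at least $k+1$ distinct external variables in total, which yields indices $e^a \in \{e^1_\mu\}\setminus\{e^2_\mu\}$ and $e^b \in \{e^2_\mu\}\setminus\{e^1_\mu\}$ and hence a ``diagonal'' minor $(\partial F_1/\partial x_{e^a})(\partial F_2/\partial x_{e^b})$ whose off-diagonal entries vanish at $q^0$.

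The main obstacle is twofold. First, one must promote pointwise genericity at $q^0$ to a single general member that is quasismooth along the entire stratum $W$; this is where torus-equivariance combined with a Bertini-type generic-smoothness argument on the restricted systems $\Lambda(M_j)$ is used, exactly as in \cite{IF00}. Second, and more delicate, is case (4): here the two gradient rows must be not merely individually nonzero but linearly independent, and because the Euler relation already removes one degree of freedom, the requirement of $k$ one-external monomials per factor together with the \emph{strict} inequality of at least $k+1$ distinct external variables is precisely what forces non-proportionality rather than mere non-vanishing. Once the relevant minor is identified as a nontrivial polynomial in the free coefficients, the bad fibre over $q^0$ is a proper subvariety, so $\mcB$ fails to dominate the parameter space and the general member of $\Lambda(M_1,M_2)$ is quasismooth along $\Pi^\circ_I$. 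Since (1)--(4) are exactly the hypotheses under which \cite{IF00} manufactures these monomials, the arguments there transfer with the complete linear systems replaced throughout by $\Lambda(M_1)$ and $\Lambda(M_2)$.
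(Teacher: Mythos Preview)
Your approach is exactly what the paper does: it gives no independent proof and simply remarks that the argument of \cite[8.7~Theorem]{IF00} goes through verbatim once the complete linear systems of degree $d_1,d_2$ are replaced by $\Lambda(M_1),\Lambda(M_2)$. Your sketch of that incidence-variety/dimension-count argument is already more detailed than anything the paper supplies.
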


Let $\mbP := \mbP (a_0,\dots,a_4)$ be a weighted projective $4$-space with homogeneous coordinates $x_0,x_1,x_2,x_3,x_4$ with $\deg x_i = a_i$ and $V$ a weighted hypersurface in $\mbP$ which contains a weighted complete intersection curve $\Gamma := (x_0 = f = g = 0)$, where $f, g \in \mbC [x_1,x_2,x_3,x_4]$ with $\deg f \le \deg g =: m$.
We give a criterion for quasismoothness of a general member of a suitable linear system on $V$ along $\Gamma$. 
Let $\mcM \subset |\mcO_V (m)|$ be a linear system on $V$ generated by homogeneous polynomials $g, d_1 f,\dots,d_k f,e_1 x_0,\dots,e_l x_0$ of degree $m$, where $k, j$ are some nonnegative integers and $d_i, e_i \in \mbC [x_0,\dots,x_4]$.
In this case, we define $\mcM_f$ and $\mcM_{x_0}$ to be the linear systems spanned by $d_1,\dots,d_k$ and $e_1,\dots,e_l$, respectively.
We define
\[
\operatorname{NQsm} (V) = p (\Sing C_V \setminus \{0\}),
\]
where $C_V \subset \mbA^5$ is the affine cone of $V$ and $p \colon \mbA^5 \setminus \{0\} \to \mbP$ the natural projection, and call it the {\it non-quasismooth locus} of $V$. 

\begin{Prop} \label{prop:qsmcurve}
Let $V \subset \mbP = \mbP (a_0,\dots,a_4)$, $\Gamma = (x_0 = f = g = 0) \subset V$, $\deg f \le \deg g =:m$ and $\mcM \subset \left|\mcO_V (m)\right|$ be as above.
Suppose that $\Gamma$ is quasismooth and that $\Bs \mcM_f \not\supset \Gamma$.
Then a general member of $\mcM$ is quasismooth along $\Gamma \setminus (\operatorname{NQsm} (V) \cup \Bs \mcM_{x_0})$.
\end{Prop}

\begin{proof}
A defining polynomial of $V$ can be written as $b f + c g + x_0 h$ for some $b,c \in \mbC [x_1,\dots,x_4]$ and $h \in \mbC [x_0,\dots,x_4]$.
Let $S \in \mcM$ be a general member.
A section $s$ which cuts out $S$ on $V$ can be written as $s = d f + \alpha g + x_0 e$ for some $\alpha \in \mbC$, $d \in \mbC [x_1,\dots,x_4]$ and $e \in \mbC [x_0,\dots,x_4]$ such that $H_d := (d = 0) \cap X \in \mcM_f$ and $H_e := (e = 0) \cap X \in \mcM_{x_0}$.
Note that $\alpha \ne 0$ and $H_d \not\supset \Gamma$ since $S$ is general and $\Bs \mcM_f \not\supset \Gamma$.
If $\mcM_{x_0} \supset \Gamma$, then the assertion follows immediately (in the sense that the conclusion is vacuous).
Hence we may assume that $\Bs \mcM_{x_0} \not\supset \Gamma$ and $H_e \not\supset \Gamma$.
The restriction to $\Gamma$ of the Jacobian matrix of the affine cone $C_S$ of $S$ can be computed as
\[
J_{C_S} |_{\Gamma} =
\begin{pmatrix}
h & b \frac{\prt f}{\prt x_1} + c \frac{\prt g}{\prt x_1} & b \frac{\prt f}{\prt x_2} + c \frac{\prt g}{\prt x_2} & b \frac{\prt f}{\prt x_3} + c \frac{\prt g}{\prt x_3} & b \frac{\prt f}{\prt x_4} + c \frac{\prt g}{\prt x_4} \\[2mm]
e & d \frac{\prt f}{\prt x_1} + \alpha \frac{\prt g}{\prt x_1} & d \frac{\prt f}{\prt x_2} + \alpha \frac{\prt g}{\prt x_2} & d \frac{\prt f}{\prt x_3} + \alpha \frac{\prt g}{\prt x_3} & d \frac{\prt f}{\prt x_4} + \alpha \frac{\prt g}{\prt x_4}
\end{pmatrix}.
\]
Note that the matrix
\[
\begin{split}
& \begin{pmatrix}
b \frac{\prt f}{\prt x_1} + c \frac{\prt g}{\prt x_1} & b \frac{\prt f}{\prt x_2} + c \frac{\prt g}{\prt x_2} & b \frac{\prt f}{\prt x_3} + c \frac{\prt g}{\prt x_3} & b \frac{\prt f}{\prt x_4} + c \frac{\prt g}{\prt x_4} \\[2mm]
d \frac{\prt f}{\prt x_1} + \alpha \frac{\prt g}{\prt x_1} & d \frac{\prt f}{\prt x_2} + \alpha \frac{\prt g}{\prt x_2} & d \frac{\prt f}{\prt x_3} + \alpha \frac{\prt g}{\prt x_3} & d \frac{\prt f}{\prt x_4} + \alpha \frac{\prt g}{\prt x_4}
\end{pmatrix} \\
&= 
\begin{pmatrix}
b & c \\
d & \alpha
\end{pmatrix}
\begin{pmatrix}
\frac{\prt f}{\prt x_1} & \frac{\prt f}{\prt x_2} & \frac{\prt f}{\prt x_3} & \frac{\prt f}{\prt x_4} \\[2mm]
\frac{\prt g}{\prt x_1} & \frac{\prt g}{\prt x_2} & \frac{\prt g}{\prt x_3} & \frac{\prt g}{\prt x_4}
\end{pmatrix}.
\end{split}
\]
is of rank $2$ at any point of $\Gamma \setminus (\alpha b - c d = 0)$  and is of rank $1$ at any point of $\Gamma \cap (\alpha b - c d = 0)$ since $\Gamma$ is quasismooth and $\alpha \ne 0$.
It follows that $S$ is quasismooth along $\Gamma \setminus (\alpha b - c d = 0)$.
We shall show that $J_{C_S} |_{\Gamma}$ is of rank $2$ at any point $\msp \in \Gamma \setminus (\operatorname{NQsm} (V) \cup \Bs \mcM_{x_0})$.

Assume that $(b = c = 0) \cap \Gamma = \Gamma$, that is,  both $b$ and $c$ vanish along $\Gamma$.
Then $h$ does not vanish at $\msp$ since $V$ is quasismooth at $\msp$.
It follows that $J_{C_S} |_{\Gamma}$ is of rank $2$ at $\msp$.

In the following, we assume that $(b=c=0) \cap \Gamma \ne \Gamma$.
We claim that $(\alpha b - c d = 0) \cap \Gamma$ is a finite set of points.
If $(b=0) \not\supset \Gamma$, then $(\alpha b - c d = 0) \cap \Gamma \ne \Gamma$ for a general choice of $\alpha$ and $d$.
Assume that $(b= 0) \supset \Gamma$.
Then $(c = 0) \not\supset \Gamma$ since $(b=c=0) \cap \Gamma \ne \Gamma$.
In this case $(\alpha b - c d = 0) \cap \Gamma = (c d = 0) \cap \Gamma$ and it is a finite set of points since $H_d \not\supset \Gamma$.

If $\msp \notin (\alpha b - c d = 0)$, then $J_{C_S} |_{\Gamma}$ is of rank $2$ at $\msp$ by the above argument.
It remains to consider the case $\msp \in (\alpha b - c d = 0) \cap \Gamma$.
Since $V$ is quasismooth at $\msp$, the first row of $(J_{C_S}|_{\Gamma}) (\msp)$ is non-zero.
If the entries of the first row of $(J_{C_S}|_{\Gamma}) (\msp)$ are zero except for $h (\msp)$, then $J_{C_S}|_{\Gamma}$ is of rank $2$.
Otherwise there is a non-zero entry in the first row of $(J_{C_S}|_{\Gamma}) (\msp)$ other than $h (\msp)$ and we can choose a general $e$ so that $J_{C_S}|_{\Gamma}$ is of rank $2$ at $\msp$ since $H_e \in \mcM_{x_0}$ and $\msp \notin \Bs \mcM_{x_0}$. 
Since there are only finitely many points in $\Gamma \cap (\alpha b - c d = 0)$, we can choose a general $e$ so that $J_{C_S}|_{\Gamma}$ is of rank $2$ at every point of $\Gamma \cap (\alpha b - c d = 0)$.
This completes the proof.
\end{proof}

\subsection{Generality conditions and their consequences}

In the rest of this paper, the coordinates $x_0, x_1, y_0, y_1, y, z, s_0$ and $s_1$ are of degree $1,1,2,2,2,3,4$ and $4$, respectively.
We set
\[
\mbP (1,1,2,2,3) = \Proj \mbC [x_0,x_1,y_0,y_1,z]
\]
and
\[
\mbP (1,1,2,3,4,4) = \Proj \mbC [x_0,x_1,y,z,s_0,s_1].
\]
Let $a_6, b_6$ and $c_8$ be homogeneous polynomials of degree $6$, $6$ and $8$, respectively, in variables $x_0,x_1,z$.
We define weighted hypersurface
\[
X' = (y_0^2 y_1^2 + y_0 a_6 + y_1 b_6 + c_8 = 0) \subset \mbP (1,1,2,2,3)
\]
and weighted complete intersections
\[
X_1 = (s_0 y + s_1 y + a_6 = s_0 s_1 - y b_6 - c_8 = 0) \subset \mbP (1,1,2,3,4,4),
\]
\[
X_2 = (s_0 y + s_1 y + b_6 = s_0 s_1 - y a_6 - c_8 = 0) \subset \mbP (1,1,2,3,4,4).
\]
We define points of $X'$ as
\[
\msp'_1 = (0 \!:\! 0 \!:\! 1 \!:\! 0 \!:\! 0), \ 
\msp'_2 = (0 \!:\! 0 \!:\! 0 \!:\! 1 \!:\! 0), \ 
\msp'_3 = (0 \!:\! 0 \!:\! 0 \!:\! 0 \!:\! 1),
\]
and points of $X_i$, $i = 1,2$, as
\[
\msp_1 = (0 \!:\! 0 \!:\! 0 \!:\! 0 \!:\! 1 \!:\! 0), \ 
\msp_2 = (0 \!:\! 0 \!:\! 0 \!:\! 0 \!:\! 0 \!:\! 1), \ 
\msp_3 = (0 \!:\! 0 \!:\! 1 \!:\! 0 \!:\! 0 \!:\! 0).
\]

We recall the definition of singularity of type $cAx/2$ and after that we introduce conditions on the triplet $(a_6,b_6,c_8)$.
In the following, $\mbA^4_{x,y,z,u}/\mbZ_2 (a,b,c,d)$ is the quotient of the affine $4$-space with affine coordinates $x,y,z,u$ under the $\mbZ_2 (= \mbZ/2 \mbZ)$-action given by 
\[
(x,y,z,u) \mapsto ((-1)^a x, (-1)^b y, (-1)^c z, (-1)^d u),
\]
and $(g (x,y,z,u) = 0) / \mbZ_2 (a,b,c,d)$ is the quotient of the hypersurface $g = 0$ in $\mbA^4$ for a $\mbZ_2$-invariant polynomial $g$.

\begin{Def}
Let $X$ be a germ of a $3$-dimensional terminal singularity.
We say that the singularity is of type $cAx/2$ if there is an isomorphism
\[
X \cong (x^2 + y^2 + g (z,u) = 0) / \mbZ_2 (0,1,1,1),
\]
where $g (z,u) \in (z,u)^4 \mbC \{z,u\}$ is $\mbZ_2$-invariant.
\end{Def}

\begin{Lem} \label{lem:cAx2verif}
Let 
\[
o \in (x^2 + y^2 + g (z,u) = 0)/\mbZ_2 (0,1,1,1)
\]
be a germ, where $g (z,u)$ is $\mbZ_2$-invariant, and let $f$ be the lowest degree part of $g$.
If $\deg f = 6$ and $f$ does not have a multiple component, then the germ is a $cAx/2$ singular point and there exists a unique divisorial extraction centered at $o$.
\end{Lem}

\begin{proof}
We set $V = (x^2 + y^2 + g (z,u) = 0)/\mbZ_2 (0,1,1,1)$.
We need to show that $o \in V$ is terminal.
Let $\varphi \colon W \to V$ be the weighted blowup of $V$ at $o$ with $\wt (x,y,z,u) = \frac{1}{2} (4,3,1,1)$.
The exceptional divisor $E$ is isomorphic to
\[
(y^2 + f (z,u) = 0) \subset \mbP (4,3,1,1),
\]
where $x,y,z,u$ are thought of as homogeneous coordinates of degree $3,4,1,1$.
We see that $E$ is irreducible and it is straightforward to see that $W$ has a singularity of type $\frac{1}{4} (1,1,3)$ at $(1\!:\!0\!:\!0\!:\!0) \in E$ and is nonsingular elsewhere.
Moreover $K_W = \varphi^*K_V + \frac{1}{2} E$.
This shows that $\varphi$ is a divisorial contraction from a terminal threefold $W$. 
Therefore $o \in V$ is a terminal singularity. 

According to the classification \cite{Ha99,Ka05} of divisorial extractions, there is a unique divisorial extraction centered at $o$ if the lowest degree part of $g$, which is $f$, is not a square (see also \cite[Section 2.2]{Okada2}).
Therefore the proof is completed.
\end{proof}

\begin{Cond} \label{cond}
\begin{enumerate}
\item $X'$ is quasismooth outside the points $\msp'_1$ and $\msp'_2$.
\item The singularities of $X'$ at $\msp'_1$ and $\msp'_2$ are both of type $cAx/2$.
\item Both $X_1$ and $X_2$ are quasismooth outside the point $\msp_3$.
\item The singularities of $X_1$ and $X_2$ at $\msp_3$ are both of type $cAx/2$.
\end{enumerate}
\end{Cond}

\begin{Def} \label{def:M}
For a positive integer $d$ and a polynomial $g$ in variable $y$ or in variables $y_0,y_1$, we define 
\[
M_d = \{\, x_0^k x_1^l z^m \mid k,l,m \ge 0 \text{ and } k + l + 3 m = d \, \}
\]
and
\[
g M_d = \{\, g h \mid h \in M_d \, \}.
\]
\end{Def}

\begin{Prop} \label{prop:verifycond}
\emph{Condition \ref{cond}} is satisfied for a general triplet $(a_6,b_6,c_8)$.
\end{Prop}

\begin{proof}
We set
\[
\begin{split}
N' &= \{ y_0^2 y_1^2 \} \cup y_0 M_6 \cup y_1 M_6 \cup M_8, \\
N_6 &= \{s_0 y, s_1 y\} \cup M_6, \\
N_8 &= \{s_0 s_1\} \cup y M_6 \cup M_8.
\end{split}
\] 
To verify conditions (1) and (3), it is enough to show that general members of $\Lambda (N')$ and $\Lambda (N_6,N_8)$ are quasismooth outside $\msp'_1, \msp'_2$ and $\msp_3$, respectively.
This follows from Theorems \ref{qsmcriwh} and \ref{qsmcriwci}.

Note that $(a_6 = 0) \subset \mbP (1,1,3)$ is quasismooth for a general $a_6$.
We claim that if $(a_6 = 0) \subset \mbP (1,1,3)$ is quasismooth, then $\msp'_1 \in X'$ is of type $cAx/2$.
We work on the open subset where $y_0 \ne 0$.
Then, by setting $y_0 = 1$, $X'$ is defined as
\[
(y_1^2 + a_6 + y_1 b_6 + c_8 = 0) \subset \mbA^4_{x_0,x_1,y_1,z}/\mbZ_2 (1,1,0,1).
\]
Since $(a_6 = 0) \subset \mbP (1,1,3)$ is quasismooth, $z^2 \in a_6$, and hence we may write $a_6 = z^2 + f_6 (x_0,x_1)$ for some $f_6$ after replacing $z$.
It follows again from quasismoothness of $(a_6 = 0) \subset \mbP (1,1,3)$ that $f_6$ does not have a multiple component.
By a suitable analytic coordinate change, the germ $(X',\msp'_1)$ is analytically equivalent to the origin of
\[
(y_1^2 + z^2 + g (x_0,x_1) = 0) \subset \mbA^4_{x_0,x_1,y_1,z} / \mbZ_2 (1,1,0,1),
\]
where the lowest weight term of $g$ is $f_6$. 
By Lemma \ref{lem:cAx2verif}, $\msp'_1$ is of type $cAx/2$.
By the symmetric argument, the point $\msp'_2 \in X'$ is of type $cAx/2$ if $(b_6 = 0) \subset \mbP (1,1,3)$ is quasismooth, and the condition (2) is verified.

We claim that the singularity of $X_2$ at $\msp_3$ is equivalent to that of $X'$ at $\msp'_1$.
By setting $y = 1$ in the defining polynomials of $X_2$, we see that $(X_2, \msp_3)$ is isomorphic to
\[
\begin{split}
(s_0 + & s_1 + b_6 = s_0 s_1 - a_6 - c_8 = 0) 
\subset \mbA^5_{x_0,x_1,z,s_0,s_1}/\mbZ_2 (1,1,1,0,0) \\
& \cong (s_0^2 + a_6 + s_0 b_6 + c_8 = 0) \subset \mbA^4_{x_0,x_1,s_0,z} /\mbZ_2 (1,1,0,1).
\end{split}
\]
Hence the germ $(X_2, \msp_3)$ is isomorphic to $(X', \msp'_1)$.
We have $(X_1,\msp_3) \cong (X',\msp'_2)$ by symmetry.
Therefore the condition (4) follows from (2).
This completes the proof.
\end{proof}

In the following we assume that $(a_6,b_6,c_8)$ satisfies Condition \ref{cond}.
We see that $\Sing (X') = \{\msp'_1,\msp'_2,\msp'_3\}$ and the singularity of $X'$ at $\msp'_3$ is of type $\frac{1}{3} (1,1,2)$, and $\Sing (X_i) = \{\msp_1,\msp_2,\msp_3\}$ and the singularity of $X_i$ at $\msp_1, \msp_2$ are of type $\frac{1}{4} (1,1,3)$. 

\begin{Lem} \label{lem:uniqueextr}
The following assertions hold.
\begin{enumerate}
\item The weighted hypersurfaces
\[
(a_6 = 0) \subset \mbP (1,1,3) \text{ and } (b_6 = 0) \subset \mbP (1,1,3)
\]
are quasismooth.
\item Let $X$ be one of $X'$, $X_1$ and $X_2$, and $\msp$ a singular point of $X$.
Then there is a unique divisorial extraction centered at $\msp$.
\end{enumerate}
\end{Lem}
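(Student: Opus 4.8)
The plan is to extract (1) from the $cAx/2$ hypothesis built into Condition \ref{cond}, and to reduce (2) to known classifications of divisorial contractions. For (1), I would first translate the hypothesis that $\msp'_1$ is of type $cAx/2$ into a condition on $a_6$. Working on the chart $y_0 = 1$ and completing the square in $y_1$ as in the proof of Lemma \ref{lem:verifycond}, the germ $(X', \msp'_1)$ is analytically isomorphic to the origin of $(y_1^2 + g = 0)/\mbZ_2 (1,1,0,1)$, where $g = a_6 + c_8 - \tfrac{1}{4} b_6^2 \in \mbC [x_0,x_1,z]$. Every monomial appearing in $a_6$, $c_8$, $b_6^2$ has even (ordinary) degree in $x_0,x_1,z$, so $g = \alpha z^2 + (\text{terms of degree} \ge 4)$, where $\alpha$ is the coefficient of $z^2$ in $a_6$. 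Since a $cAx/2$ singularity has a $cA_n$ index-one cover, the tangent cone $y_1^2 + \alpha z^2$ of $(y_1^2 + g = 0) \subset \mbA^4$ must have rank at least two, which forces $\alpha \ne 0$. After completing the square in $z$ one may then arrange $g = z^2 + f(x_0,x_1)$ with $f$ a form of degree $6$, and $(y_1^2 + z^2 + f = 0) = (uv + f = 0)$, where $u = y_1 + \sqrt{-1}\, z$ and $v = y_1 - \sqrt{-1}\, z$, has an isolated singularity at the origin precisely when $f$ is square-free. By Euler's relation the two conditions ``$\alpha \ne 0$'' and ``$f$ square-free'' are exactly the Jacobian criterion for quasismoothness of $(a_6 = 0) \subset \mbP (1,1,3)$; repeating the argument at $\msp'_2$ with $b_6$ in place of $a_6$ gives quasismoothness of $(b_6 = 0)$.

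For (2), the plan is to split the singular points of $X$ according to analytic type. The points $\msp'_3 \in X'$ and $\msp_1, \msp_2 \in X_i$ are terminal cyclic quotient singularities (of index $3$ and $4$), and for such a point the unique divisorial extraction centered at it is the Kawamata blow-up, by Kawamata's theorem \cite{Kawamata}. The remaining singular points are the $cAx/2$ points: $\msp'_1$ and $\msp'_2$ of $X'$, and, by Lemma \ref{lem:verifycond}, the points $\msp_3 \in X_1$ and $\msp_3 \in X_2$, which are analytically isomorphic to them. By (1) and the reduction above, each of these is analytically isomorphic to the origin of $(y_1^2 + z^2 + f (x_0,x_1) = 0)/\mbZ_2 (1,1,0,1)$ with $f$ a square-free form of degree $6$. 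For such a singularity I would invoke the classification of divisorial contractions to $cAx/2$ points \cite{Hayakawa}: it gives exactly one divisorial extraction centered at the point, realized in suitable coordinates as a weighted blow-up of discrepancy $\tfrac{1}{2}$. This $cAx/2$ singularity, and the uniqueness of its extraction, is the one analysed in \cite{Okada2}.

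The hard part is the $cAx/2$ case of (2). The cyclic quotient case is settled by a single clean citation, but a $cAx/2$ point is not a cyclic quotient singularity, and controlling \emph{all} of its divisorial extractions rests on the considerably more delicate structure theory of divisorial contractions to terminal threefold singularities of higher index. One must also apply the classification carefully enough to rule out not merely a second extraction of minimal discrepancy, but any second divisorial extraction centered at the point; this is where square-freeness of $f$ enters.
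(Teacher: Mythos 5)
Your part (2) follows the same route as the paper (Kawamata's theorem \cite{Kaw96} for the quotient points; the Hayakawa--Kawakita classification \cite{Ha99,Ka05} for the $cAx/2$ points), but your part (1) — and hence the input you feed into (2) — has a genuine gap. After setting $y_0=1$ and completing the square in $y_1$ and then in $z$, the germ $(X',\msp'_1)$ is $(y_1^2+z^2+F(x_0,x_1)=0)\subset \mbA^4/\mbZ_2(1,1,0,1)$ where $F$ is \emph{not} a form of degree $6$: it is a power series whose degree-$6$ part is the normalized $f_6$ coming from $a_6$, but which also contains degree-$\ge 8$ contributions from $c_8$ (e.g.\ the $z$-free part $h_8$) and from $b_6^2$. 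The $cAx/2$ (terminality/isolatedness) hypothesis of Condition \ref{cond}(2) is equivalent to $\alpha\ne 0$ together with reducedness of the full series $F$, and reducedness of $F$ does \emph{not} imply that its degree-$6$ part is square-free (take $F=x_0^6+x_1^8$, i.e.\ $a_6=z^2+x_0^6$ with $x_1^8\in c_8$: the germ is a genuine $cAx/2$ point, yet $(a_6=0)\subset\mbP(1,1,3)$ is not quasismooth at $(0\!:\!1\!:\!0)$). So your claimed equivalence ``$\alpha\ne0$ and $f$ square-free $\Leftrightarrow$ quasismoothness of $(a_6=0)$'' conflates $F$ with its lowest-degree form; Condition \ref{cond}(2) alone does not yield (1). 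The paper instead deduces (1) from Condition \ref{cond}(3): if $(a_6=0)$ were non-quasismooth at $(\xi_0\!:\!\xi_1\!:\!\zeta)$, then the point $(\xi_0\!:\!\xi_1\!:\!0\!:\!\zeta\!:\!\sigma\!:\!-\sigma)$ with $\sigma^2=\pm c_8(\xi_0,\xi_1,\zeta)$ lies on $X_1$, and there the whole first row of the Jacobian of the affine cone vanishes, contradicting quasismoothness of $X_1$ outside $\msp_3$ (and symmetrically for $b_6$ via $X_2$).

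This gap propagates into your treatment of the $cAx/2$ case of (2). The classification does not give unconditional uniqueness of a divisorial extraction centered at a $cAx/2$ point: in general there can be a second weighted blow-up of discrepancy $1/2$ (and, for special germs, extractions of higher discrepancy), so one must verify a concrete condition on the germ. The paper's condition is that the degree-$6$ form $f_6$ in the normal form $y_1^2+z^2+g(x_0,x_1)$ is \emph{not a square}, which it gets precisely from (1) (if $f_6$ were a square, $(z^2+f_6=0)$ would not be quasismooth), and then cites \cite{Ha99,Ka05} and \cite[Section 2.2]{Okada2} for uniqueness. Your outline appeals to ``square-freeness of $f$'' for this step, but as explained above your argument only establishes reducedness of the power series $F$, not the needed condition on its degree-$6$ part; once (1) is proved as in the paper, the rest of your (2) (including the transfer to $\msp_3\in X_1,X_2$ via the analytic identifications of Lemma \ref{lem:verifycond}) goes through as in the paper.
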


\begin{proof}
Assume that $C := (a_6 = 0) \subset \mbP (1,1,3)$ is not quasismooth at a point $(\xi_0 \!:\! \xi_1 \!:\! \zeta) \in C$.
Let $\sigma$ be a complex number such that $\sigma^2 =  -c_8 (\xi_0,\xi_1,\zeta)$ and set $\msp = (\xi_0 \!:\! \xi_1 \!:\! 0 \!:\! \zeta \!:\! \sigma \!:\! -\sigma)$.
We see that $\msp \in X_1$ and $X_1$ is not quasismooth at $\msp$.
This is a contradiction because $X_1$ is quasismooth except at $\msp_3$.
Thus $C$ is quasismooth.
Quasismoothness of $(b_6 = 0) \subset \mbP (1,1,3)$ can be proved in the same way using $X_2$.
This shows (1).

The uniqueness of divisorial extraction centered at a terminal quotient singular point follows from \cite{Kaw96}.
We consider $cAx/2$ points.
By the proof of Proposition \ref{prop:verifycond}, after replacing $z$ so that $a_6 = z^2 + f_6 (x_0,x_1)$, the singularity of $X'$ at $\msp'_1$ is equivalent to 
\[
(y_1^2 + z^2 + g (x_0,x_1) = 0) \subset \mbA^4_{x_0,x_1,y_1,z} / \mbZ_2 (1,1,0,1),
\]
where the lowest degree part of $g$ is $f_6$.
By (1), the polynomial $f_6$ does not have a multiple component.
Thus the uniqueness follows from Lemma \ref{lem:cAx2verif}.
(2) follows for $(X', \msp'_2)$ by symmetry and for $(X_1,\msp_3)$ and $(X_2,\msp_3)$ since the singularities of $X_1$ at $\msp_3$ and of $X_2$ at $\msp_3$ are equivalent to those of $X'$ at $\msp'_2$ and at $\msp'_1$, respectively.
This proves (2).
\end{proof}

\begin{Prop}
The varieties $X'$, $X_1$ and $X_2$ are $\mbQ$-factorial.
\end{Prop}

\begin{proof}
This follows from Lemma \ref{QfaccAx} below. 
\end{proof}

\begin{Lem} \label{QfaccAx}
A singular point of type $cAx/2$ is $($analytically$)$ $\mbQ$-factorial.
\end{Lem}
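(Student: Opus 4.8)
A terminal singular point of type $cAx/2$ is (analytically) $\mathbb{Q}$-factorial.

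The plan is to work with the explicit analytic normal form of a $cAx/2$ point and show that its local class group is trivial (or at least that every Weil divisor is $\mathbb{Q}$-Cartier). Recall that a $cAx/2$ point is analytically isomorphic to the origin of a hypersurface
\[
\bigl( x^2 + y^2 + h(z,t) = 0 \bigr) \subset \mathbb{A}^4_{x,y,z,t} / \mathbb{Z}_2(1,1,1,0),
\]
where $h$ has no repeated factor in its lowest-degree part and the $\mathbb{Z}_2$-action has the indicated weights; equivalently one may pass to the index-one cover $\widetilde{X} = (x^2 + y^2 + h = 0) \subset \mathbb{A}^4$, which has an isolated $cDV$ singularity (of type $cA$ after the change of variables absorbing $x^2+y^2$), together with the free-in-codimension-one $\mathbb{Z}_2$-action.

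First I would establish $\mathbb{Q}$-factoriality upstairs, i.e.\ for the index-one cover $\widetilde{X}$ at the origin. Since $\widetilde{X}$ is a hypersurface, it is a local complete intersection, hence Gorenstein and in particular Cohen--Macaulay; and because the singularity is isolated (this uses the terminality/$cDV$ hypothesis, giving an isolated compound Du Val point in a $3$-fold), $\widetilde{X}$ is regular in codimension two, so the analytic local ring is parafactorial by the Grothendieck--Samuel type result (a punctured spectrum of a local complete intersection of depth $\ge 3$ that is regular in codimension $\le 2$ has trivial Picard group), and an excellent Cohen--Macaulay normal domain that is regular in codimension $2$ and has trivial local Picard group in codimension three is factorial by a theorem of Grothendieck (SGA2, Exposé XI). Hence $\widetilde{X}$ is analytically factorial at the origin, so in particular $\mathbb{Q}$-factorial.

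Next I would descend to $X = \widetilde{X}/\mathbb{Z}_2$. The key point is that the quotient map $\pi\colon \widetilde{X} \to X$ is étale in codimension one (the fixed locus of the $\mathbb{Z}_2$-action on $\widetilde{X}$, namely $x=y=0$ intersected with the hypersurface, has codimension $\ge 2$), so for any Weil divisor $D$ on $X$ the pullback $\pi^*D$ is a well-defined Weil divisor on $\widetilde{X}$, which is Cartier by the previous step; then $N\cdot D$ with $N = 2$, or rather the pushforward argument $\pi_*\pi^* D = 2D$ (valid since $\pi$ has degree $2$ and is étale in codimension one), shows $2D$ is Cartier, because $2D = \pi_*(\pi^*D)$ and the pushforward of a Cartier divisor along a finite étale-in-codimension-one morphism between normal varieties is $\mathbb{Q}$-Cartier — indeed $\pi_*\mathcal{O}_{\widetilde X}(\pi^*D)$ is reflexive and the trace/norm map identifies $\mathcal{O}_X(2D)$ with a direct summand, hence it is locally free. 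Therefore every Weil divisor on $X$ is $\mathbb{Q}$-Cartier, i.e.\ $X$ is $\mathbb{Q}$-factorial at the $cAx/2$ point.

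The main obstacle I anticipate is the factoriality of the index-one cover: one must be careful that the analytic (as opposed to Zariski-local Henselian) local ring of an isolated $cDV$ hypersurface singularity is genuinely factorial, which is where terminality is essential — a $cDV$ point that is not isolated, or the wrong normal form, could fail this (indeed general $cA$ points are only $\mathbb{Q}$-factorial, not factorial, unless isolated and of the appropriate parity). I would invoke the classical fact that an isolated hypersurface singularity of dimension $\ge 3$ is factorial (a consequence of the Lefschetz-type theorem of Grothendieck together with the fact that such a singularity has vanishing local $H^2$), which applies here because the $cAx/2$ normal form has an isolated compound Du Val singularity on its double cover. Once that input is in hand, the descent step is formal, so the real content is packaging the cited factoriality result for isolated $cDV$ hypersurface points correctly; alternatively one can cite the known table of $\mathbb{Q}$-factoriality of terminal threefold singularities (e.g.\ as recorded in the literature on the Sarkisov program, where $cAx/2$ appears explicitly with local class group computed), which makes this a citation rather than a proof.
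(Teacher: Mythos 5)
There is a genuine gap, and it sits exactly at the step you yourself flag as the ``real content'': the claim that the index-one cover $\widetilde{X} = (x^2+y^2+g(z,t)=0)\subset \mathbb{A}^4$ is analytically factorial at the origin is false. The Grothendieck--Samuel parafactoriality theorem for local complete intersections requires regularity in codimension $\le 3$, i.e.\ a singular locus of codimension $\ge 4$; an isolated singular point on a threefold hypersurface has codimension exactly $3$, so the theorem does not apply, and indeed it fails: the ordinary double point $xy-zt=0$ has analytic local class group $\mathbb{Z}$ and is not even $\mathbb{Q}$-factorial analytically. (Your parenthetical that ``general $cA$ points are only $\mathbb{Q}$-factorial'' is likewise incorrect.) Worse, in the situation at hand the cover genuinely fails to be $\mathbb{Q}$-factorial in general: writing $x^2+y^2=(x+\sqrt{-1}y)(x-\sqrt{-1}y)$ and $g=g_1\cdots g_d$ with distinct irreducible formal factors (distinctness is what isolatedness gives you), one computes $\operatorname{Cl}(B)\cong \bigoplus_i \mathbb{Z}\cdot[\mathfrak{p}_i]/\sum_i[\mathfrak{p}_i]\cong\mathbb{Z}^{d-1}$ with $\mathfrak{p}_i=(x-\sqrt{-1}y,\,g_i)$, which is free and nonzero whenever $d\ge 2$ (e.g.\ $g=z^4+t^4$). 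So the strategy ``prove ($\mathbb{Q}$-)factoriality upstairs and descend along the index-one cover'' cannot be repaired: the conclusion you need upstairs is simply not true, and your descent step, while formally fine, has nothing to feed on.

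The mechanism that actually makes the lemma true is that the covering involution acts by $-1$ on $\operatorname{Cl}(B)$: it interchanges the two branches $x\pm\sqrt{-1}y$, and since $(x-\sqrt{-1}y,g_i)+(x+\sqrt{-1}y,g_i)$ is the principal divisor of $g_i$, each class $[\mathfrak{p}_i]$ is sent to $-[\mathfrak{p}_i]$. Hence $\operatorname{Cl}(B)^{\mathbb{Z}_2}=0$, and the exact-sequence comparison for the inclusion $A=B^{\mathbb{Z}_2}\subset B$ (kernel of $\operatorname{Cl}(A)\to\operatorname{Cl}(B)$ contained in $H^1(\mathbb{Z}_2,B^{*})$, a $2$-torsion group; image contained in $\operatorname{Cl}(B)^{\mathbb{Z}_2}$) shows $\operatorname{Cl}(A)$ is $2$-torsion, so $\operatorname{Cl}(A)\otimes\mathbb{Q}=0$ and the $cAx/2$ point is $\mathbb{Q}$-factorial (but not claimed to be factorial). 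In other words, the free part of the local class group upstairs is killed precisely because it is anti-invariant under the Galois action, not because it vanishes; your proposal misses this, and the fallback of ``citing a table'' does not supply an argument either.
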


\begin{proof}
Let $(X,o)$ be a germ of singularity of type $cAx/2$.
Then $X$ is analytically equivalent to
\[
(x^2 + y^2 + g (z,t) = 0) \subset \mbA^4 / \mbZ_2 (0,1,1,1),
\] 
where $g (z,t) \in (z,t)^4$ is semi-invariant.
We define 
\[
B = \mbC [[x,y,z,t]]/(x^2 + y^2 + g (z,t))
\] 
and consider the $\mbZ_2$ action of type $(0,1,1,1)$ on $B$.
We see that the completion $\hat{\mcO}_{X,o}$ is isomorphic to $A := B^{\mbZ_2}$.
Since $o \in X$ is an isolated singularity, there is no multiple in the irreducible decomposition $g = g_1 g_2 \cdots g_d$.
We see that 
\[
\Cl (B) = \bigoplus\nolimits_{i=1}^d \mbZ \! \cdot \! [\mfp_i] / \bsum_{i=1}^d [\mfp_i],
\] 
where $\mfp_i = (x - \sqrt{-1} y, g_i)$ is a height $1$ prime ideal of $B$.
Let $j \colon \Cl (A) \to \Cl (B)$ be the homomorphism induced by the injection $A \inj B$.
The image of $j$ is contained in $\Cl (B)^{\mbZ_2}$ and the kernel of $j$ is contained in $H^1 (\mbZ_2, B^*)$ (cf.\ \cite[Theorem16.1]{Fossum}).
The $\mbZ_2$ action on $\Cl (B)$ is given by $[\mfp_i] \mapsto - [\mfp_i]$.
It is easy to see that $\Cl (B)^{\mbZ_2} = 0$ and that $H^1 (\mbZ_2,B^*)$ consists of $2$-torsions.
It follows that $\Cl (A)$ consists of $2$-torsions and in particular we have $\Cl (A) \otimes_{\mbZ} \mbQ = 0$.
This shows that $(X,o)$ is $\mbQ$-factorial.
\end{proof}

\subsection{Condition for $X_1$ and $X_2$ being isomorphic}

We consider a condition on $(a_6,b_6,c_8)$ for $X_1$ and $X_2$ being isomorphic to each other.

\begin{Def} \label{def:sym}
We say that a triplet $(a_6,b_6,c_8)$ is {\it symmetric} if there are non-zero complex numbers $\alpha,\beta,\gamma$ and an automorphism $\tau$ of $\mbP (1,1,3)$ such that $\gamma^3 = \alpha^2 \beta^2$, $\tau^* a_6 = \alpha b_6$, $\tau^*b_6 = \beta a_6$ and $\tau^*c_8 = \gamma c_8$.
A triplet $(a_6,b_6,c_8)$ is called {\it asymmetric} if it is not symmetric.
\end{Def}

\begin{Lem} \label{lem:h_0}
Set $\mbP := \mbP (1,1,2,3,4,4)$.
The homomorphism $H^0 (\mbP, \mcO_{\mbP} (m)) \to H^0 (X_1, \mcO_{X_1} (m))$ is an isomorphism for $m \le 5$.
\end{Lem}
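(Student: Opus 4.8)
The plan is to resolve the ideal sheaf of $X_1$ inside $\mbP := \mbP(1,1,2,3,4,4)$ by a Koszul complex and then invoke the standard vanishing of the cohomology of line bundles on a weighted projective space. Write $f_6 = s_0 y + s_1 y + a_6$ and $g_8 = s_0 s_1 - y b_6 - c_8$, so that $X_1 = (f_6 = g_8 = 0)$. Since $\mbP$ is Cohen--Macaulay of dimension $5$ and $X_1$ is a threefold, the ideal $(f_6, g_8)$ has height $2$ and $f_6, g_8$ form a regular sequence in $\mbC[x_0,x_1,y,z,s_0,s_1]$; hence, writing $\mcI := \mcI_{X_1 \subset \mbP}$, there is a Koszul exact sequence
\[
0 \longrightarrow \mcO_{\mbP}(-14) \longrightarrow \mcO_{\mbP}(-6) \oplus \mcO_{\mbP}(-8) \longrightarrow \mcI \longrightarrow 0 .
\]

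Next I would use that $\mbP$ is well-formed of dimension $5$, so that $H^0(\mbP, \mcO_{\mbP}(k)) = 0$ for $k < 0$ and $H^i(\mbP, \mcO_{\mbP}(k)) = 0$ for every $k$ whenever $0 < i < 5$; this is the standard computation of the cohomology of $\mcO_{\mbP}(k)$ on weighted projective space. Twisting the Koszul sequence by $\mcO_{\mbP}(m)$ and passing to the long exact cohomology sequence: for $m \le 5$ the integers $m - 6$, $m - 8$, $m - 14$ are all negative, and $H^1$ and $H^2$ of $\mcO_{\mbP}$ twisted by any of these vanish for dimension reasons; therefore $H^0(\mbP, \mcI(m))$ sits in an exact sequence between $H^0$ of $\mcO_{\mbP}(m-6) \oplus \mcO_{\mbP}(m-8)$ and $H^1(\mbP, \mcO_{\mbP}(m-14))$, and $H^1(\mbP, \mcI(m))$ sits in an exact sequence between $H^1$ of $\mcO_{\mbP}(m-6) \oplus \mcO_{\mbP}(m-8)$ and $H^2(\mbP, \mcO_{\mbP}(m-14))$, the surrounding groups being zero in both cases. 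Hence $H^0(\mbP, \mcI(m)) = H^1(\mbP, \mcI(m)) = 0$ for $m \le 5$.

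Finally, feeding these vanishings into the long exact cohomology sequence of the structure sequence $0 \to \mcI(m) \to \mcO_{\mbP}(m) \to \mcO_{X_1}(m) \to 0$ shows that the restriction map $H^0(\mbP, \mcO_{\mbP}(m)) \to H^0(X_1, \mcO_{X_1}(m))$ is injective (from $H^0(\mbP, \mcI(m)) = 0$) and surjective (from $H^1(\mbP, \mcI(m)) = 0$), which is exactly the assertion.

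I expect no genuine obstacle here. The two points deserving a word of care are that $\mbP(1,1,2,3,4,4)$ is well-formed --- which is clear since two of the weights equal $1$, so no five of the six weights have a common factor, hence the cohomology computation applies with the expected twisting sheaves --- and that $(f_6, g_8)$ is an honest regular sequence, which holds because these two equations cut out a scheme of the expected codimension $2$ in the Cohen--Macaulay space $\mbP$. The range $m \le 5$ is sharp: it is precisely what is needed to kill $H^0(\mbP, \mcO_{\mbP}(m-6))$, and for $m = 6$ the polynomial $f_6$ itself lies in the kernel of the restriction map.
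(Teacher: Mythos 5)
Your proof is correct in substance, but it takes a genuinely different route from the paper, and one step needs a more careful formulation. The paper never resolves $\mcI_{X_1}$ globally on $\mbP$: it removes the singular locus of $\mbP$ together with the non-quasismooth locus of the intermediate hypersurface $Y = (s_0 s_1 - y b_6 - c_8 = 0) \supset X_1$, checks that the removed set has codimension at least $3$, transfers the vanishing of $H^1$ and $H^2$ of $\mcO(m)$ from $\mbP$ to the open set $U$ by local cohomology, and then runs the two restriction sequences for $X_U \subset Y_U \subset U$, where all sheaves are invertible and twisting is harmless. Your global Koszul argument is more direct (no intermediate hypersurface, no codimension estimate for its non-quasismooth locus), but it has to confront exactly the subtlety the paper's detour is designed to avoid: on the singular space $\mbP(1,1,2,3,4,4)$ the sheaves $\mcO_{\mbP}(k)$ are not invertible, tensoring with them is not exact, and the natural map $\mcO_{\mbP}(a) \otimes \mcO_{\mbP}(b) \to \mcO_{\mbP}(a+b)$ can fail to be an isomorphism at singular points, so the phrase ``twisting the Koszul sequence by $\mcO_{\mbP}(m)$'' is not an innocent operation. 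The standard fix is to shift degrees at the level of graded modules before sheafifying: with $S = \mbC[x_0,x_1,y,z,s_0,s_1]$ and $I = (f_6, g_8)$, the exact sequences of graded $S$-modules $0 \to S(m-14) \to S(m-6) \oplus S(m-8) \to I(m) \to 0$ and $0 \to I(m) \to S(m) \to (S/I)(m) \to 0$ sheafify (exactly, since sheafification is exact and $\widetilde{S(k)} = \mcO_{\mbP}(k)$) to precisely the twisted sequences you want, and then your vanishing argument using $H^0(\mbP,\mcO_{\mbP}(k)) = 0$ for $k<0$ and $H^i(\mbP,\mcO_{\mbP}(k)) = 0$ for $0 < i < 5$ goes through verbatim, including the sharpness remark at $m=6$. (Both your proof and the paper's leave implicit the identification of $H^0(X_1,\mcO_{X_1}(m))$ with the group actually computed --- sections of the sheafified shifted module in your case, sections over the open part $X_U$ in the paper's --- which is harmless since the sheaf in question is reflexive and the excluded locus has codimension at least $2$ in $X_1$.)
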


\begin{proof}
We set $X := X_1$ and let $Y = (s_0 s_1 - y b_6 - c_8 = 0) \subset \mbP$ be the weighted hypersurface containing $X$.
Let $S$ be the non-quasismooth locus of $Y$.
We have $\dim S \le 1$ since $X$ is quasismooth outside a single point.
Let $T$ be the union of $S$ and the singular locus of $\mbP$, and we see that $U := \mbP \setminus T$, $Y_U := Y \cap U$ and $X_U := X \cap U$ are nonsingular.
Moreover the codimension of $X \setminus X_U$ in $X$ is at least $2$ since $\dim S \le 1$ and $T \cap X$ is a finite set of points.
Since the codimension in $\mbP$ of each component of $S$ is greater than or equal to $3$, we have $H^i (U, \mcO_U (m)) = H^i (\mbP, \mcO_{\mbP} (m))$ for $i = 0,1,2$ and for any $m$.
This follows by considering the long exact sequence of local cohomologies.
In particular,  we have $H^1 (U, \mcO_U (m)) = H^2 (U, \mcO_U (m)) = 0$ for any $m$.
By the long exact sequence associated to the exact sequence
\[
0 \to \mcO_U (m - 8) \to \mcO_U (m) \to \mcO_{Y_U} (m) \to 0,
\]
we have $H^0 (U, \mcO_U (m)) \cong H^0 (Y_U, \mcO_{Y_U} (m))$ for $m < 8$ and $H^1 (Y_U, \mcO_{Y_U} (m)) = 0$ for any $m$.
Then, by the long exact sequence associated to the exact sequence
\[
0 \to \mcO_{Y_U} (m - 6) \to \mcO_{Y_U} (m) \to \mcO_{X_U} (m) \to 0,
\] 
we have $H^0 (Y_U, \mcO_{Y_U} (m)) \cong H^0 (X_U, \mcO_{X_U} (m))$ for $m < 6$.
This shows that the restriction $H^0 (U, \mcO_U (m)) \to H^0 (X_U, \mcO_{X_U} (m))$ is an isomorphism for $m < 6$.
\end{proof}

\begin{Prop} \label{prop:sym}
$X_1$ is isomorphic to $X_2$ if and only if $(a_6,b_6,c_8)$ is symmetric.
\end{Prop}

\begin{proof}
Assume that there is an isomorphism $\sigma \colon X_1 \to X_2$.
We have $\sigma^* \mcO_{X_2} (m) \cong \mcO_{X_1} (m)$ for any $m$ since $\sigma^* K_{X_2} = K_{X_1}$.
By Lemma \ref{lem:h_0}, the sections $\sigma^*x_0$, $\sigma^*x_1$, $\sigma^*y$, $\sigma^*z$, $\sigma^*s_0$, $\sigma^*s_1$ can be identified with homogeneous polynomials of degree respectively $1,1,2,3,4,4$, and let $\varphi$ be the automorphism of $\mbP (1,1,2,3,4,4)$ inducing $\sigma$.
The divisor which is cut out on $X_1$ by $\sigma^*s_i$ ($i = 0,1$) passes through a singular point of type $\frac{1}{4} (1,1,3)$.
By replacing $\sigma$ with the composite of $\sigma$ and the automorphism of $X_1$ interchanging $s_0$ and $s_1$, we can assume that $\sigma^*s_0$ (resp.\ $\sigma^*s_1$) vanishes at $\msp_2$ (resp.\ $\msp_1$) and does not vanish at $\msp_1$ (resp.\ $\msp_2$).
We may write $\varphi^* s_i = \lambda_i s_i + \lambda'_i y^2 + y q^{(i)} + f^{(i)}$, $\varphi^* z = \nu z + y \ell + g$ and $\varphi^* y = \mu y + h$, where $\lambda_i, \lambda'_i, \mu, \nu \in \mbC$, $q^{(i)}, \ell, g, h \in \mbC [x_0,x_1]$ and $f^{(i)} \in \mbC [x_0,x_1,z]$.
Since the zero loci of $\varphi^* (s_0 y + s_i y + b_6)$ and $\varphi^* (s_0 s_1 - y a_6 - c_8)$ contain $X_1$, we have 
\begin{equation} \label{eq1}
\varphi^* (s_0 y + s_1 y + b_6) = \delta (s_0 y + s_1 y + a_6)
\end{equation}
and
\begin{equation} \label{eq2}
\varphi^* (s_0 s_1 + y a_6 + c_8) = \varepsilon (s_0 s_1 - y b_6 - c_8) + q (s_0 y + s_1 y + a_6)
\end{equation}  
for some non-zero $\delta, \varepsilon \in \mbC$ and $q \in \mbC [x_0,x_1,y]$.
By comparing the terms involving $s_i$ in \eqref{eq1}, we have $\lambda_0 = \lambda_1$, $\mu \ne 0$ and $h = 0$.
We put $\lambda := \lambda_0 = \lambda_1$.
Note that there is no monomial divisible by $y^3$ in $\varphi^*a_6$, $\varphi^*b_6$ and $\varphi^*c_8$.
By comparing terms involving $s_i$ in \eqref{eq2}, we have $\varepsilon = \lambda^2$, $\lambda'_i = 0$, $f^{(i)} = 0$ and $q = \lambda q^{(0)} = \lambda q^{(1)}$.
By comparing terms involving $y^3$ in \eqref{eq2}, we have $\ell = 0$.
It follows that $\varphi^*a_6, \varphi^*b_6, \varphi^*c_8 \in \mbC [x_0,x_1,z]$.
Thus, by comparing terms divisible by $y^2$ in \eqref{eq1}, we have $q^{(0)} = q^{(1)} = 0$.
Therefore, we have $\varphi^* s_i = \lambda s_i$ and $\varphi^* y = \mu y$, $\varphi^*z = \nu z + g (x_0,x_1)$ and $\varphi^*x_i \in \mbC [x_0,x_1]$, and the relations $\varphi^* b_6 = \lambda \mu a_6$, $\mu \varphi^*a_6 = \lambda^2 b_6$ and $\varphi^*c_8 = \lambda^2 c_8$ are satisfied.
By setting $\alpha = \lambda^2/\mu$, $\beta = \lambda \mu$ and $\gamma = \lambda^2$, we observe $\gamma^3 = \alpha^2 \beta^2$.
Thus $(a_6,b_6,c_8)$ is symmetric.

Conversely, if we are given an automorphism $\tau$ of $\mbP (1,1,3)$ and $\alpha,\beta,\gamma \in \mbC$ such that $\gamma^3 = \alpha^2 \beta^2$, $\tau^*a_6 = \alpha b_6$, $\tau^* b_6 = \beta a_6$ and $\tau^* c_8 = \gamma c_8$, then the automorphism $\varphi$ of $\mbP (1,1,2,3,4,4)$ defined by $\varphi^*x_i = \tau^*x_i$ for $i = 0,1$, $\varphi^* z = \tau^* z$ and
\[
\varphi^* y = \frac{\gamma}{\alpha} y, \ \varphi^* s_0 = \frac{\alpha \beta}{\gamma} s_0, \  \varphi^* s_1 = \frac{\alpha \beta}{\gamma} s_1,
\]
restricts to an isomorphism between $X_1$ and $X_2$.
This completes the proof.
\end{proof}

We show that there does exist a symmetric triplet $(a_6,b_6,c_8)$ that satisfies Condition \ref{cond}.

\begin{Prop}
Let $a_6$ and $c_8$ are general homogeneous polynomials in variables $x_0,x_1,z$.
Then the triplet $(a_6,a_6,c_8)$ is symmetric and satisfies \emph{Condition \ref{cond}}.
\end{Prop}

\begin{proof}
Let $X'$ be the weighted hypersurface
\[
X' = (y_0^2 y_1^2 + y_0 a_6 + y_1 a_6 + c_8 = 0) \subset \mbP (1,1,2,2,3)
\]
and let $\Lambda$ be the linear system spanned by $y_0^2 y_1^2$,  $M_8$ and $(y_0 + y_1) M_6$.
A general member $X'$ of $\Lambda$ is quasismooth outside the base locus of $\Lambda$ by the Bertini theorem and the base locus of $\Lambda$ is the set $\{ \msp'_1, \msp'_2,  \msp'_3\}$.
The check of singularity types of $X'$ at $\msp'_1, \msp'_2$ and $\msp'_3$ can be done as in the proof of Proposition \ref{prop:verifycond}.

Let $a_6$ and $c_8$ be general so that $X'$ is quasismooth outside $\{\msp'_1,\msp'_2\}$ and the singularity of $X'$ at $\msp'_1$ and $\msp'_2$ are both of type $cAx/2$.
Let $X$ be the weighted complete intersection
\[
X = (s_0 y + s_1 y + a_6 = s_0 s_1 - y a_6 - c_8 = 0) \subset \mbP (1,1,2,3,4,4).
\]
We have $X = X_1 = X_2$.
It is easy to check that the singularities of $X$ at $\msp_1, \msp_2$ are both of type $\frac{1}{4} (1,1,3)$.
As in the proof of Proposition \ref{prop:verifycond}, we have the equivalences of singularities $(X, \msp_3) \cong (X',\msp'_1)$, hence the singularity of $X$ at $\msp_3$ is of type $cAx/2$.
It remains to show that $X^{\circ} := X \setminus \{\msp_1,\msp_2,\msp_3\}$ is nonsingular.
Instead of proving quasismoothness of $X$ directly, we derive it from the description of singularities of $X'$ by making use of the arguments in Section \ref{sec:links3} (note that we do not need here the fact that $\psi'_1$ and $\psi_1$ are small).
There is a birational map $\sigma_{11} \colon X' \ratmap X$ which factorizes as
\[
\xymatrix{
Y' \ar[d]_{\varphi'} \ar@{-->}[r] & Y \ar[d]^{\varphi} \\
X' \ar@{-->}[r]^{\sigma_{11}} & X}
\]
where $\varphi'$ is the weighted blowup of $X'$ at $\msp'_1$ with $\wt (x_0,x_1,y_1,z) = \frac{1}{2} (1,1,4,3)$, $\varphi$ is the Kawamata blowup of $X$ at $\msp_1$ and $Y' \ratmap Y$ is a birational map.
The construction of the above birational map is possible in the case where the singularity of $X'$ at $\msp'_1$ is of type $cAx/2$ and that of $X$ at $\msp_1$ is of type $\frac{1}{4} (1,1,3)$.
Let $\Delta' \subset Y'$ and $\Delta \subset Y$ be proper transforms of $(y_1 = a_6 = c_8 = 0) \subset X'$ and $(y = s_1 = a_6 = c_8 = 0) \subset X$, respectively. 
Then the birational map $Y' \ratmap Y$ induces an isomorphism $Y' \setminus \Delta' \cong Y \setminus \Delta$.
We know that $X'$ has three singular points of type $\frac{1}{3} (1,1,2)$, $cAx/2$ and $cAx/2$.
Thus $Y'$ has three singular points whose types are $\frac{1}{3} (1,1,2)$, $\frac{1}{4} (1,1,3)$ and $cAx/2$ by the description of the weighted blowup.
By $Y' \setminus \Delta' \cong Y \setminus \Delta$, we see that $Y \setminus \Delta$ has at most three singular points of type $\frac{1}{3} (1,1,2)$, $\frac{1}{4} (1,1,3)$ and $cAx/2$.
It follows that $X \setminus (y = s_1 = a_6 = c_8 = 0)$ has also at most three singular points of type $\frac{1}{3} (1,1,2)$, $\frac{1}{4} (1,1,3)$ and $cAx/2$ since the center of $\varphi \colon Y \to X$ is contained in $(y = s_1 = a_6 = c_8 = 0)$. 
On the other hand, $X$ has singularities of type $\frac{1}{4} (1,1,3)$, $\frac{1}{4} (1,1,3)$ and $cAx/2$ at $\msp_1, \msp_2$ and $\msp_3$, respectively, and possibly Gorenstein singularities.
Therefore $X \setminus (y = s_1 = a_6 = c_8 = 0)$ has only singularities of type $\frac{1}{4} (1,1,4)$ and $cAx/2$ (at $\msp_2$ and $\msp_3$).
By changing the role of $s_0$ and $s_1$, we also see that $X \setminus (y = s_0 = a_6 = c_8 = 0)$ has only singularities of type $\frac{1}{4} (1,1,3)$ and $cAx/2$ (at $\msp_1$ and $\msp_3$).

It is then enough to show that $X$ is nonsingular along $S := (y = s_0 = s_1 = a_6 = c_8 = 0)$.
We see that the restriction to $S$ of the Jacobian matrix of the affine cone $C_X$ of $X$ can be written as
\[
J_{C_X}|_S =
\begin{pmatrix}
\frac{\prt a_6}{\prt x_0} & \frac{\prt a_6}{\prt x_1} & 0 & \frac{\prt a_6}{\prt z} & 0 & 0 \\[2mm]
-\frac{\prt c_8}{\prt x_0} & -\frac{\prt c_8}{\prt x_1} & 0 & -\frac{\prt c_8}{\prt z} & 0 & 0
\end{pmatrix}.
\]
Therefore $X$ is quasismooth along $S$ since the complete intersection $(a_6 = c_8 = 0)$ in $\mbP (1,1,3)$ is quasismooth for general $a_6$ and $c_8$ by Theorem \ref{qsmcriwci}.
Thus $X$ is nonsingular along $S$ and this completes the proof.
\end{proof}

\subsection{Structure of proof}

The remainder of this paper is devoted to a proof of the following.

\begin{Thm} \label{mainthm2}
Let $(a_6,b_6,c_8)$ be a triplet of homogeneous polynomials in $x_0,x_1,z$ satisfying \emph{Condition \ref{cond}} and $X'$, $X_1$, $X_2$ the $\mbQ$-Fano threefolds corresponding to $(a_6,b_6,c_8)$.
Then no nonsingular point and no curve on $X'$, $X_1$ and $X_2$ is a maximal center.
As for the Sarkisov links from $X'$, $X_1$ and $X_2$ centered at singular points, the following hold.
\begin{enumerate}
\item There exit Sarkisov links $X' \ratmap X_1$ and $X' \ratmap X_2$ centered at the $cAx/2$ points $\msp'_1$ and $\msp'_2$, respectively.
\item There exists a Sarkisov link $X' \ratmap X'$ centered at the $\frac{1}{3} (1,1,2)$ point $\msp'_3$ of $X'$ which is a birational involution.
\item For $i = 1,2$, there exists a Sarkisov link $X_i \ratmap X'$ centered at each $\frac{1}{4} (1,1,3)$ point of $X_i$.
\item For the $cAx/2$ points $\msp_3 \in X_1$ and $\msp_3 \in X_2$, one of the following holds.
\begin{enumerate}
\item Neither $\msp_3 \in X_1$ nor $\msp_3 \in X_2$ is a maximal center.
\item There exits a Sarkisov link $X_1 \ratmap X_2$ centered at $\msp_3 \in X_1$ and its inverse $X_2 \ratmap X_1$ is centered at $\msp_3 \in X_2$.
\end{enumerate}
\end{enumerate}
\end{Thm}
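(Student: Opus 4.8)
The plan is to follow the now-standard Sarkisov-program approach to pliability of $\mbQ$-Fano threefolds, as in \cite{CPR, Okada1, Okada2}: for each of $X'$, $X_1$, $X_2$ one enumerates all potential maximal centers, excludes those that cannot be maximal, and for the surviving ones constructs the explicit Sarkisov link and identifies its target. First I would dispose of the nonsingular points and the curves. For nonsingular points one uses the standard test comparing $(-K_X)^3$ with the local intersection estimates; since $(-K_{X'})^3 = 8/(1\cdot1\cdot2\cdot2\cdot3) = 2/3$ and $(-K_{X_i})^3 = 6\cdot8/(1\cdot1\cdot2\cdot3\cdot4\cdot4) = 1/2$, both are small, and the usual $4$-$n$-type inequalities (cf.\ \cite{CPR}) rule out all nonsingular points. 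For curves one uses that a maximal center curve $\Gamma$ must satisfy $(-K_X)\cdot\Gamma < (-K_X)^3$ together with the bound on $\deg\Gamma$ coming from the projection; again the numerics are tight enough that no curve survives. This part is routine and I would treat it briefly, referring to the cited papers for the mechanism.

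The heart of the proof is the analysis at the singular points, and here I would proceed point by point. For the quotient singularities — the $\frac13(1,1,2)$ point $\msp'_3$ of $X'$ and the two $\frac14(1,1,3)$ points $\msp_1,\msp_2$ of $X_i$ — there is by the previous lemma a \emph{unique} divisorial extraction (the Kawamata blowup), so one takes that extraction $\varphi\colon Y\to X$, runs the $2$-ray game on $Y$ (which has Picard number $2$), and checks that it terminates in another Mori fiber space. Concretely: after the Kawamata blowup one identifies the anticanonical model / the relevant projection, performs the flops and flips dictated by the toric ambient $2$-ray game, and reads off the target. For $\msp'_3\in X'$ I expect the link to be an involution $X'\ratmap X'$ (the extraction at the $\frac13(1,1,2)$ point is symmetric in the two coordinates of degree $1$ or is realized by an explicit quadratic involution in the style of the Corti–Mella construction). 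For the $\frac14(1,1,3)$ points of $X_i$, the already-established isomorphisms $(X_1,\msp_3)\cong(X',\msp'_2)$ etc., plus the explicit birational map $\sigma_{11}\colon X'\ratmap X$ sketched in the last lemma of the excerpt, show the links $X_i\ratmap X'$ exist and land on $X'$. The links $X'\ratmap X_1$, $X'\ratmap X_2$ centered at the $cAx/2$ points $\msp'_1,\msp'_2$ are the inverses of these, constructed via the weighted blowup with $\wt(x_0,x_1,y_1,z)=\frac12(1,1,4,3)$ exactly as indicated there; one must verify that the $2$-ray game on the resulting $Y'$ produces $X_1$ (resp.\ $X_2$) rather than some non-$\mbQ$-Fano Mori fiber space, which is where Condition \ref{cond} and quasismoothness of $(a_6=0),(b_6=0)\subset\mbP(1,1,3)$ get used to guarantee the intermediate varieties have only the expected terminal singularities and that the game runs to completion.

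The remaining and genuinely subtle case is part (4): the $cAx/2$ point $\msp_3$ on $X_1$ (and on $X_2$). Here one again has a unique divisorial extraction over $\msp_3$, so one performs it and plays the $2$-ray game; the dichotomy in (4) reflects exactly whether this game terminates in a Mori fiber space (giving a link $X_1\ratmap X_2$) or instead terminates with a divisorial contraction to a non-$\mbQ$-Fano base — equivalently whether $\msp_3$ is a maximal center at all. The clean way to organize this is to exploit the symmetry: by the isomorphisms established in Lemma \ref{lem:verifycond}, $(X_1,\msp_3)\cong(X',\msp'_2)$ and $(X_2,\msp_3)\cong(X',\msp'_1)$, so a link starting at $\msp_3\in X_1$ is, up to the already-constructed links, ``the same'' as a link from $X'$ at $\msp'_2$ composed with something; one chases the $2$-ray game to see that the only possible target that is a Mori fiber space is $X_2$ and that whether it is reached is an open condition on $(a_6,b_6,c_8)$ — hence for general such triplets alternative (b) holds, while for special ones (a) holds. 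I expect this step — carefully controlling the singularities of the midgame varieties over the $cAx/2$ point and pinning down the terminus of the $2$-ray game — to be the main obstacle, since the $2$-ray game over a $cAx/2$ point is not toric and requires explicit analysis of the flips, in contrast to the quotient-singularity cases where everything is governed by the toric ambient geometry.
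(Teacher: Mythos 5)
Your overall architecture matches the paper's: explicit constructions of the links at the singular points (the quadratic involution at $\msp'_3$, the projection to $\mbP(1,1,3,4,4)$ for the $cAx/2$ points of $X_1,X_2$, and the projections eliminating $y_0$ resp.\ $s_0$ for the links $X'\ratmap X_i$), plus exclusion of nonsingular points by isolating classes and of curves by degree bounds. But there is a genuine gap in your treatment of curves on $X'$. You claim the numerics ``are tight enough that no curve survives''; they are not. On $X'$ one has $(A^3)=2/3$, and an irreducible curve $\Gamma$ with $(A\cdot\Gamma)=1/2$ not passing through the quotient point $\msp'_3$ (necessarily passing through one of the $cAx/2$ points $\msp'_1,\msp'_2$) passes the test $(A\cdot\Gamma)<(A^3)$, and no extraction-existence argument rules it out since the center is a $cAx/2$ point, not a quotient point. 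The paper has to work here: it shows such a $\Gamma$ is, after a coordinate change, $(x_1=y_1=z=0)$, cuts $\Gamma$ by a general member $S$ of an explicit linear system $\mcM\subset|3A|$ (using the quasismoothness Lemma \ref{lem:qsmcurve} to control $S$ along $\Gamma$), computes $(\Gamma^2)\le -3/2$ on $S$ via a weighted blowup of $S$ at the $cAx/2$ point, and derives a contradiction with $(L^2)\ge 0$ for the movable part of $\mcH|_S$. This is one of the delicate steps and cannot be dismissed as routine. (On $X_1,X_2$ your numerical argument does suffice, since $(A^3)=1/2\le (A\cdot\Gamma)$ for curves avoiding the $\frac14(1,1,3)$ points.)

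A second, smaller gap is in part (4). You assert that failure of the $2$-ray game at $\msp_3$ is ``equivalent'' to $\msp_3$ not being a maximal center, and that the dichotomy is governed by an unspecified open condition. The paper pins this down: the anticanonical ($K$-trivial) contraction $\psi_i\colon Y_i\to Z$ from the unique extraction is divisorial exactly when $a_6$ is proportional to $b_6$, and in that case $\msp_3$ is excluded as a maximal center because the contracted divisor supplies infinitely many curves $C$ with $(-K_{Y_i}\cdot C)=0$ and $(E_i\cdot C)>0$, invoking a known exclusion lemma from \cite{Okada2}; when $a_6$ is not proportional to $b_6$ the contraction is small, $Y_1\ratmap Y_2$ is a flop, and the link $X_1\ratmap X_2$ exists. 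Your outline needs this (or an equivalent) mechanism for case (a); the implication ``game does not reach a Mori fiber space $\Rightarrow$ not a maximal center'' is not automatic and must be argued. Likewise, for the links in (1) and (3) the smallness of the two anticanonical contractions onto $Z_1$ has to be verified concretely (in the paper: $a_6$ and $c_8$ share no common component, proved via quasismoothness of $X_1$), not just attributed to Condition \ref{cond} in general terms.
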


In view of the fact that there is a unique divisorial extraction centered at each singular point of $X'$, $X_1$ and $X_2$, Theorem \ref{mainthm} follows from Proposition \ref{prop:sym} and Theorem \ref{mainthm2} by \cite[Lemma 2.32]{Okada2}.
The construction of Sarkisov links will be given in Section \ref{sec:links} and exclusion of nonsingular points and curves as maximal centers will be done in Sections \ref{sec:exclX'} and \ref{sec:exclX_i}.
In Section \ref{sec:biraut}, we state the classification of Sarkisov links and give a description of the birational automorphism group.

\section{Sarkisov links} \label{sec:links}

We construct various Sarkisov links between $X'$, $X_1$ and $X_2$.
Throughout this section we assume that $(a_6,b_6,c_8)$ satisfies Condition \ref{cond}.

\subsection{Birational involution of $X'$} \label{sec:birinvX'}

We construct a birational involution $\iota'$ of $X'$ which is a Sarkisov link centered at the $\frac{1}{3} (1,1,2)$ point $\msp'_3$.
The construction is the same as that of \cite[Section 4.4]{CPR} to which we refer the readers for a detail.
After re-scaling $y_0,y_1,z$, we may assume that the coefficients of $z^2$ in $a_6$ and $b_6$ are both $1$.
We write $a_6 = z^2 + z f_3 + f_6$, $b_6 = z^2 + z g_3 + g_6$ and $c_8 = z^2 h_2 + z h_5 + h_8$, where $f_i, g_i, h_i \in \mbC [x_0,x_1]$.
It follows that the defining polynomial of $X'$ is 
\[
F' := (y_0+y_1+h_2) z^2 + (y_0 f_3 + y_1 g_3 + h_5)z + y_0^2 y_1^2 + y_0 f_6 + y_1 g_6 + h_8.
\]
Let $Z'$ be the weighted hypersurface in $\mbP (1,1,2,2,5)$ with homogeneous coordinates $x_0,x_1,y_0,y_1,t$, where $\deg t = 5$, defined by the equation
\[
t^2 + (y_0 f_3 + y_1 g_3 + h_5) t + (y_0 + y_1 + h_2)(y_0^2 y_1^2 + y_0 f_6 + y_1 g_6 + h_8) = 0.
\]
This equation is obtained by multiplying $F'$ by $y_0 + y_1 + h_2$ and then identifying $t$ with $(y_0+y_1+h_2)z$.
This identification gives rise to a birational map $X' \ratmap Z'$.
Let $\varphi' \colon Y' \to X'$ be the Kawamata blowup of $X'$ at $\msp'_3$.
Then $\varphi'$ resolves the indeterminacy of $X' \ratmap Z'$ and the induced birational morphism $\psi' \colon Y' \to Z'$ is a flopping contraction contracting the proper transform of the closed subscheme
\[
(y_0 + y_1 + h_2 = y_0 f_3 + y_1 g_3 + h_5 = y_0^2 y_1^2 + y_0 f_6 + y_1 g_6 + h_8 = 0)
\]
in $\mbP (1,1,2,2,3)$, which consists of finitely many curves by the argument of \cite[Section 4.4]{CPR} using quasismoothness.
Let $\iota_{Z'} \colon Z' \to Z'$ be the biregular involution interchanging the fibers of the double cover $Z' \to \mbP (1,1,2,2)$.
Then $\iota_{Y'} := {\psi'}^{-1} \circ \iota_{Z'} \circ \psi' \colon Y' \ratmap Y'$ is the flop and we have a Sarkisov link $\iota' = {\varphi'}^{-1} \circ \iota_{Y'} \circ \varphi' \colon X' \ratmap X'$.
In summary, we have

\begin{Prop}
The diagram
\[
\xymatrix{
Y' \ar[d]_{\varphi'} \ar@{-->}[r]^{\iota_{Y'}} & Y' \ar[d]^{\varphi'} \\
X' \ar@{-->}[r]_{\iota'} & X'}
\]
is a Sarkisov link centered at $\msp'_3$ that is a birational involution.
\end{Prop}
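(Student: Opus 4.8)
The plan is to read off the ingredients of a Sarkisov link from the construction carried out just above, and then to obtain the involution property by a short formal computation; the geometric substance — resolving the indeterminacy by the single weighted blowup $\varphi'$ and identifying $\psi'$ — has already been supplied there.

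By that construction, $\varphi'\colon Y'\to X'$ is the Kawamata blowup of $X'$ at the terminal cyclic quotient point $\msp'_3$ of type $\frac13(1,1,2)$, hence an extremal divisorial extraction by \cite{Kaw96}; it resolves the indeterminacy of $X'\ratmap Z'$; the induced morphism $\psi'\colon Y'\to Z'$ is a flopping contraction, i.e.\ small and crepant over $Z'$, contracting finitely many curves; and $\iota_{Y'}:=(\psi')^{-1}\circ\iota_{Z'}\circ\psi'\colon Y'\ratmap Y'$ is the flop of $\psi'$, so in particular it is an isomorphism in codimension one and is not an isomorphism. Since a single flop is trivially a composite of inverse flips, flops and flips in that order, the diagram is a Sarkisov link $\iota\colon X'\ratmap X'$ whose two extremal divisorial extractions both equal $\varphi'$ and whose middle map is this flop; its center is the center of $\varphi'$, namely $\msp'_3$.

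It remains to verify that $\iota:=\varphi'\circ\iota_{Y'}\circ(\varphi')^{-1}\colon X'\ratmap X'$ is a birational involution. It is birational, being a composite of birational maps. Since $\psi'$ is birational, $\psi'\circ(\psi')^{-1}=\operatorname{id}$ as birational maps, so, using that $\iota_{Z'}$ is a biregular involution of $Z'$,
\[
\iota_{Y'}\circ\iota_{Y'}=(\psi')^{-1}\circ\iota_{Z'}\circ\bigl(\psi'\circ(\psi')^{-1}\bigr)\circ\iota_{Z'}\circ\psi'=(\psi')^{-1}\circ\iota_{Z'}^2\circ\psi'=\operatorname{id}_{Y'},
\]
whence $\iota^2=\varphi'\circ\iota_{Y'}^2\circ(\varphi')^{-1}=\operatorname{id}$. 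Moreover $\iota\neq\operatorname{id}$ — so the involution is genuine and not biregular — because the projection $X'\ratmap\mbP(1,1,2,2)$ forgetting $z$ is generically two-to-one, its Galois involution is induced by $\iota_{Z'}$, and $\iota$ interchanges the two points of a general fibre. The one step that deserves care, and that makes the link (hence $\iota$) nontrivial, is that $\psi'$ genuinely contracts a curve — equivalently $Y'\not\cong Z'$ — which is exactly the assertion, established just above, that $\psi'$ is a flopping contraction and not an isomorphism.
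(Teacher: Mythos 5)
Your proposal is correct and follows the same route as the paper: the paper itself gives no separate proof, treating the Proposition as a direct consequence of the preceding construction (the Kawamata blowup $\varphi'$ resolving the indeterminacy of $X'\ratmap Z'$, $\psi'$ being a small $K$-trivial contraction, and $\iota_{Y'}=(\psi')^{-1}\circ\iota_{Z'}\circ\psi'$ being the flop), which is exactly what you invoke. Your added formal check that $\iota^2=\operatorname{id}$ and the remark on nontriviality via the $2:1$ projection to $\mbP(1,1,2,2)$ are harmless elaborations of the same argument.
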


\subsection{Link between $X_1$ and $X_2$}

For $i = 1,2$, let $\varphi_i \colon Y_i \to X_i$ be the weighted blowup of $X_i$ at the $cAx/2$ point $\msp_3$ with $\wt (x_0,x_1,z,s_0,s_1) = \frac{1}{2} (1,1,3,4,4)$ and $\pi_i \colon X_i \ratmap \mbP (1,1,3,4,4)$ the projection with coordinates $x_0,x_1,z,s_0$ and $s_1$.
The images of $\pi_1$ and $\pi_2$ are the same and it is the weighted hypersurface 
\[
Z := ((s_0 + s_1)(s_0 s_1 - c_8) + a_6 b_6 = 0) \subset \mbP (1,1,3,4,4).
\]
The sections $x_0,x_1,z,s_0$ and $s_1$ on $X_i$ lift to plurianticanonical sections on $Y_i$ and they define the morphism $\psi_i \colon Y_i \to Z$ such that $\psi_i = \varphi_i \circ \pi_i$.
It follows that $\psi_i$ is a $K_{Y_i}$-trivial contraction.
We see that $\psi_i$ contracts the proper transform on $Y_i$ of
\[
\Delta := (s_0 + s_1 = s_0 s_1 - c_8 = a_6 = b_6 = 0) \subset \mbP (1,1,2,3,4,4).
\]
We see that $\dim \Delta = 2$ if and only if $a_6 \sim b_6$ since the projection $\Delta \to \mbP (1,1,2,3)$ is a finite morphism (of degree $2$) onto $(a_6 = b_6 = 0) \subset \mbP (1,1,2,3)$. 
Here, $a_6 \sim b_6$ means that $a_6$ is proportional to $b_6$, that is, there is a non-zero $\lambda \in \mbC$ such that $a_6 = \lambda b_6$.

\begin{Lem}
If $a_6 \sim b_6$, then the $cAx/2$ point of $X_i$ is not a maximal center for $i = 1,2$.
\end{Lem}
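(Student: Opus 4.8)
The plan is to exploit the equivalence, observed just before the statement, between the hypothesis that $a_6$ is proportional to $b_6$ and the equality $\dim \Delta = 2$. The latter forces the contraction $\psi_i \colon Y_i \to Z$ to be \emph{divisorial} while it is still $K_{Y_i}$-trivial, and such a contraction cannot appear in a Sarkisov link.

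First I would describe the two-ray game on $Y_i$. Since $X_i$ has Picard number one and $\varphi_i \colon Y_i \to X_i$ is a divisorial extraction, $Y_i$ is a $\mbQ$-factorial terminal threefold with $\rho (Y_i) = 2$, so $\bNE (Y_i)$ has exactly two extremal rays; write $R_1$ for the one contracted by $\varphi_i$. Since $K_{Y_i} = \varphi_i^* K_{X_i} + a E_i$ with $a > 0$, the divisor $-K_{Y_i}$ is positive on $R_1$. It is also semiample, because the sections $x_0, x_1, z, s_0, s_1$ define $\psi_i$, and it is not ample, because $\psi_i$ contracts the proper transform of $\Delta$ and so is not an isomorphism. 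Hence $-K_{Y_i}$ lies on the boundary of $\Nef (Y_i)$, it is trivial on the second extremal ray $R_2$, and $\psi_i$ is exactly the contraction of $R_2$.

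Next I would bring in the hypothesis. If $a_6$ is proportional to $b_6$, then $\dim \Delta = 2$, so the proper transform $\Delta_i \subset Y_i$ of $\Delta$ is a prime divisor and $\psi_i$ contracts it; thus the contraction of $R_2$ is divisorial. Suppose, to obtain a contradiction, that $\msp_3 \in X_i$ is a maximal center. Since $\varphi_i$ is the unique divisorial extraction centered at $\msp_3$, it is then a maximal singularity, and consequently there is a Sarkisov link starting with $\varphi_i$, realized by the two-ray game on $Y_i$. Its first step is the treatment of the ray $R_2$, and must be one of: a flip, a flop, an inverse flip, a divisorial contraction onto the next Mori fiber space, or a fibration. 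The first three are small operations, and are ruled out because $\psi_i$ contracts the divisor $\Delta_i$; a fibration is ruled out because $\dim Z = 3$; and a divisorial contraction $Y_i \to Z$ ending a Sarkisov link has positive discrepancy along its exceptional divisor $\Delta_i$ and is therefore $K_{Y_i}$-negative, contradicting the $K_{Y_i}$-triviality of $\psi_i$. This contradiction shows that $\msp_3 \in X_i$ is not a maximal center; the argument is identical for $i = 1$ and $i = 2$.

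The delicate point is the invocation of the Sarkisov program: one must know that a maximal singularity really does give rise to a Sarkisov link beginning with the corresponding divisorial extraction, and that on a threefold of Picard number two the two-ray game offers precisely the moves listed. Both are standard in this context (cf.\ the Sarkisov program and the framework of \cite{Okada2}). With these in hand, the only further ingredient is the elementary fact, already established above, that $\dim \Delta = 2$ exactly when $a_6$ is proportional to $b_6$.
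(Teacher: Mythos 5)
Your proposal is correct in outline but takes a genuinely different, and heavier, route than the paper. The paper's proof is purely numerical: since $a_6$ proportional to $b_6$ makes $\Delta$ a surface, the $K_{Y_i}$-trivial morphism $\psi_i$ contracts a divisor, so there are infinitely many curves $C$ with $(-K_{Y_i}\cdot C)=0$; from $K_{Y_i}=\varphi_i^*K_{X_i}+\frac{1}{2}E_i$ one gets $(E_i\cdot C)=-2(\varphi_i^*K_{X_i}\cdot C)>0$ for every such $C$, and then \cite[Lemma 2.19]{Okada2} applies: a maximal singularity would give $-K_{Y_i}\sim_{\mbQ}\frac{1}{n}\mcH_{Y_i}+cE_i$ with $c>0$ and $\mcH_{Y_i}$ movable, hence $(-K_{Y_i}\cdot C)>0$ for all but the finitely many curves in $\Bs\mcH_{Y_i}$ --- a contradiction requiring nothing beyond movability. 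Your argument instead plays the two-ray game: the identification of $\psi_i$ with the contraction of the second extremal ray and the exclusion of every admissible move (small modification, fibration, $K$-negative divisorial contraction ending the link) are all correct, and match the setup the paper establishes just before the lemma. What your route buys is a conceptual picture (the game visibly ends with a crepant divisorial contraction, so no link can exist); what it costs is the load-bearing implication ``$\msp_3$ is a maximal center $\Rightarrow$ some Sarkisov link starts with $\varphi_i$'', which you yourself flag as delicate. As stated for an arbitrary movable system --- which is exactly what the definition of maximal center quantifies over --- this is a substantive theorem-level input from the Sarkisov program/bad-link framework, not something proved or cited for this purpose in the paper, whereas \cite[Lemma 2.19]{Okada2} is designed precisely to bypass it. So your proof is acceptable modulo that black box, but it should come with a precise reference for the implication rather than an appeal to it being standard; the paper's argument is the more elementary and self-contained of the two.
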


\begin{proof}
Since $\varphi_i$ is a unique divisorial extraction centered at the $cAx/2$ point of $X_i$ by Lemma \ref{lem:uniqueextr} (2), it is enough to show that $\varphi_i$ is not a maximal extraction.
We have $K_{Y_i} = \varphi_i^*K_{X_i} + (1/2)E_i$, where $E_i$ is the exceptional divisor of $\varphi_i$.
Note that $\Delta$ is a surface since $a_6 \sim b_6$.
It follows that $\psi_i$ contracts a divisor.
Let $C$ be an irreducible and reduced curve on $Y_i$ contracted by $\psi_i$.
Then, $(-K_{Y_i} \cdot C) = 0$ and 
\[
(E_i \cdot C) = 2 (K_{Y_i} \cdot C) - 2 (\varphi_i^*K_{X_i} \cdot C)
= - 2 (\varphi_i^*K_{X_i} \cdot C) > 0
\]
since $C$ is not contracted by $\varphi_i$.
This shows that there are infinitely many curves on $Y_i$ which intersect $-K_{Y_i}$ non-positively and $E_i$ positively.
It follows from \cite[Lemma 2.20]{Okada2} that $\varphi_i$ is not a maximal extraction.
\end{proof}

\begin{Prop} \label{prop:theta}
Assume that $a_6 \not\sim b_6$.
Then the diagram
\[
\xymatrix{
Y_1 \ar[d]_{\varphi_1} \ar@{-->}[rr]^{\psi_2^{-1} \circ \psi_1} \ar[rd]^{\psi_1} & & Y_2 \ar[ld]_{\psi_2} \ar[d]^{\varphi_2} \\
X_1 & Z & X_2}
\] 
gives a Sarkisov link $\theta \colon X_1 \ratmap X_2$ centered at the $cAx/2$ point of $X_1$.
The inverse $\theta^{-1} \colon X_2 \ratmap X_1$ is a Sarkisov link centered at the $cAx/2$ point of $X_2$.
\end{Prop}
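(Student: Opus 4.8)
The plan is to extract the Sarkisov link directly from the two birational contractions $\varphi_i \colon Y_i \to X_i$ and $\psi_i \colon Y_i \to Z$ already in hand, the only new input being that the hypothesis on $a_6$ and $b_6$ forces $\psi_i$ to be small.

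First I would record the relevant properties of $\psi_i$. It is birational: $\pi_i$ is the projection of $X_i \subset \mbP(1,1,2,3,4,4)$ away from the coordinate point $\msp_3$, and since the degree-$6$ equation cutting out $X_i$ is linear in $y$, a general weighted line through $\msp_3$ meets $X_i$ in a single further point, so $\pi_i$ maps $X_i$ birationally onto $Z$. It is $K_{Y_i}$-trivial, as already noted. Its exceptional locus is the proper transform $\tilde\Delta_i$ on $Y_i$ of $\Delta$; since $a_6$ is not proportional to $b_6$ we have $\dim \Delta \le 1$, while $\Delta$ is one-dimensional and nonempty (it is cut out on the threefold $(a_6 = b_6 = 0) \subset \mbP(1,1,2,3,4,4)$ by two further equations). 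Hence $\tilde\Delta_i$ is a nonempty curve, and $\psi_i$ is a small $K_{Y_i}$-trivial birational morphism from the $\mbQ$-factorial terminal threefold $Y_i$ onto $Z$ which is not an isomorphism.

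Next I would identify $Y_1 \ratmap Y_2$ with a composite of flops. Because the term $a_6 b_6$ is symmetric, the target $Z$ is literally the same variety for $i = 1$ and $i = 2$; thus $\psi_1$ and $\psi_2$ exhibit $Y_1$ and $Y_2$ as small $\mbQ$-factorial modifications of $Z$ on which the canonical class is trivial over $Z$. Any two such models are connected by a sequence of flops over $Z$, so $Y_1 \ratmap Y_2$ is a composite of flops.

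It then remains to assemble the pieces: the composite $X_1 \ratmap X_2$ obtained by running $\varphi_1^{-1}$, then $Y_1 \ratmap Y_2$, then $\varphi_2$, in which $\varphi_1$ and $\varphi_2$ are extremal divisorial extractions and $Y_1 \ratmap Y_2$ is a composite of flops, is by definition a Sarkisov link $\theta \colon X_1 \ratmap X_2$, and its center on $X_1$ is the center of $\varphi_1$, namely the $cAx/2$ point $\msp_3 \in X_1$. Interchanging $a_6$ and $b_6$ leaves $Z$ unchanged and swaps the two sides, which shows that $\theta^{-1}$ is the Sarkisov link starting with $\varphi_2$, hence centered at the $cAx/2$ point $\msp_3 \in X_2$. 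The one point needing care is the verification that $\psi_i$ is genuinely small with exceptional locus exactly $\tilde\Delta_i$ — essentially the content of the preceding description of $Z$ and $\psi_i$ — together with the routine bookkeeping that $\varphi_i$ raises the Picard number of $X_i$ by one and the standard fact that minimal models over $Z$ are connected by flops; beyond this I do not anticipate a real obstacle.
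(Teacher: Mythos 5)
Your setup (birationality of $\pi_i$, $K_{Y_i}$-triviality of $\psi_i$, and smallness of $\psi_i$ from $\dim \Delta = 1$) matches the paper, but there is a genuine gap at the decisive step: you never rule out that $Y_1 \ratmap Y_2$ is an isomorphism over $Z$. The general theorem you invoke (two $\mbQ$-factorial small modifications of $Z$ with trivial canonical class over $Z$ are connected by flops) only yields a \emph{possibly empty} sequence of flops. If that sequence were empty, then since $\rho(Y_1) = 2$ and the unique $K$-negative extremal contraction of $Y_1$ is $\varphi_1$, an isomorphism $Y_1 \cong Y_2$ over $Z$ would identify $\varphi_2$ with $\varphi_1$ and make $\theta$ biregular, so the diagram would exhibit an isomorphism $X_1 \cong X_2$ rather than a Sarkisov link centered at the $cAx/2$ point (and the statement about $\theta^{-1}$ would likewise collapse). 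This is not a vacuous worry that can be dismissed by comparing $X_1$ and $X_2$ as abstract varieties: by Lemma \ref{lem:sym}, $X_1 \cong X_2$ does occur for symmetric triplets, and symmetry is compatible with $a_6$ not being proportional to $b_6$, so the non-biregularity of this particular map genuinely has to be checked. Note also that your opening claim that the hypothesis on $a_6, b_6$ is needed \emph{only} to force $\psi_i$ to be small misidentifies its role: it is used a second time precisely to exclude the trivial case.

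The paper closes exactly this gap by an explicit computation: $\theta = \pi_2^{-1} \circ \pi_1$ is given by $(x_0 \!:\! x_1 \!:\! y \!:\! z \!:\! s_0 \!:\! s_1) \mapsto (x_0 \!:\! x_1 \!:\! (b_6/a_6) y \!:\! z \!:\! s_0 \!:\! s_1)$, which can be biregular only if $b_6/a_6$ is constant, i.e.\ only if $a_6$ is proportional to $b_6$ --- the case excluded by hypothesis (equivalently, the case $\dim \Delta = 2$). Once non-isomorphism is known, your appeal to the connectedness-by-flops theorem is also heavier than necessary: $\psi_1$ and $\psi_2$ are flopping contractions with relative Picard number one, so $Y_2 \to Z$ is forced to be the (unique) flop of $\psi_1$, and $\theta$ is a Sarkisov link, with $\theta^{-1}$ handled by the symmetry you describe. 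So your route is salvageable, but only after adding the verification that $\theta$ (equivalently $Y_1 \ratmap Y_2$) is not biregular.
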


\begin{proof}
By the assumption, $\dim \Delta = 1$ and thus $\psi_i$ is a flopping contraction since $\psi_i$ is a $K_{Y_i}$-trivial contraction whose exceptional locus is the proper transform of $\Delta \subset X_i$.
The birational map $\theta = \pi_2^{-1} \circ \pi_1 \colon X_1 \ratmap X_2$ is given by
\[
(x_0 \!:\! x_1 \!:\! y \!:\! z \!:\! s_0 \!:\! s_1) \mapsto (x_0 \!:\! x_1 \!:\! \frac{b_6}{a_6} y \!:\! z \!:\! s_0 \!:\! s_1).
\]
We claim that $\theta$ is not biregular.
For $i = 1,2$, let $E_i$ be the exceptional divisor $\varphi_i$.
We see that $E_1$ is isomorphic to the weighted complete intersection
\[
(s_0 + s_1 = b_6 = 0) \subset \mbP (1,1,3,4,4)
\]
and $\psi_1|_{E_1} \colon E_1 \to Z$ can be identified with the restriction of the identity mapping of $\mbP (1,1,3,4,4)$.
It follows that $\psi_1 (E_1) = (s_0 + s_1 = b_6 = 0)$ and, similarly, $\psi_2 (E_2) = (s_0 + s_1 = a_6 = 0)$.
Since $a_6 \not\sim b_6$, we have $\psi_1 (E_1) \ne \psi_2 (E_2)$.
This implies that $\theta$ (resp.\ $\theta^{-1}$) contracts the birational transform on $X_1$ (resp.\ $X_2$) of $\psi_2 (E_2)$ (resp.\ $\psi_1 (E_1)$).
Thus $\theta$ is not biregular.
It follows that $\psi_2^{-1} \circ \psi_1 \colon Y_1 \ratmap Y_2$ is a flop and thus $\theta \colon X_1 \ratmap X_2$ is a Sarkisov link.
\end{proof}

\begin{Rem} \label{rem:thetadiv}
We make explicit the description of the proper transform of $E_2$ on $X_1$ for the later use.
By the proof of Proposition \ref{prop:theta}, it is the divisor on $X_1$ which maps onto $\psi_2 (E_2) = (s_0+s_1 = a_6 = 0)$ via the projection $\pi_2 \colon X_2 \ratmap Z$, which must be the divisor $(s_0+s_1 = 0)_{X_1} \subset X_1$.
\end{Rem}

\begin{Rem}
Note that $a_6 \sim b_6$ implies that $(a_6,b_6,c_8)$ is symmetric (we do not know whether or not the converse holds).
It follows that $X_1$ and $X_2$ are connected by a Sarkisov link whenever $X_1$ is not isomorphic to $X_2$.
\end{Rem}

Note that if $(a_6,b_6,c_8)$ is asymmetric, then $\theta \colon X_1 \ratmap X_2$ is a Sarkisov link between non-biregularly equivalent $\mbQ$-Fano threefolds, but if $(a_6,b_6,c_8)$ is symmetric and $a_6 \not\sim b_6$, then $\theta$ is a birational involution of $X = X_1 \cong X_2$.

\subsection{Links between $X'$ and $X_i$} \label{sec:links3}

We construct Sarkisov links between $X'$ and $X_i$ for $i = 1,2$.
Recall that
\[
\msp'_1 = (0 \!:\! 0 \!:\! 1 \!:\! 0 \!:\! 0) \text{ and } \msp'_2 = (0 \!:\! 0 \!:\! 0 \!:\! 1 \!:\! 0)
\]
are the $cAx/2$ points of $X'$ and
\[
\msp_1 = (0 \!:\! 0 \!:\! 0 \!:\! 0 \!:\! 1 \!:\! 0) \text{ and } \msp_2 = (0 \!:\! 0 \!:\! 0 \!:\! 0 \!:\! 0 \!:\! 1)
\]
are the $\frac{1}{4} (1,1,3)$ points of $X_i$.
Let $\mbP := \mbP (1,1,2,3,4)$ be the weighted projective space with homogeneous coordinates $x_0,x_1,y,z,s$ and let $\pi'_1 \colon X' \ratmap \mbP$ be the rational map defined by 
\[
(x_0 \!:\! x_1 \!:\! y_0 \!:\! y_1 \!:\! z) \mapsto (x_0 \!:\! x_1 \!:\! y_1 \!:\! z \!:\! y_0 y_1).
\]
By multiplying the defining polynomial of $X'$ by $y_1$ and then replacing $y_1$ with $y$ and $y_0 y_1$ with $s$, we see that the image of $\pi'_1$ is the weighted hypersurface
\[
Z_1 = (s^2 y + s a_6 + y^2 b_6 + y c_8 = 0) \subset \mbP (1,1,2,3,4),
\]
and $\pi'_1 \colon X' \ratmap Z_1$ is a birational map defined outside $\msp'_1$.

Let $\pi_1 \colon X_1 \ratmap \mbP$ be the projection defined by
\[
(x_0 \!:\! x_1 \!:\! y \!:\! z \!:\! s_0 \!:\! s_1) \mapsto (x_0 \!:\! x_1 \!:\! y \!:\! z \!:\! s_1),
\]
which is defined outside $\msp_1$.
By considering the ratio
\[
s_0 = - \frac{s_1 y + a_6}{y} = \frac{y b_6 + c_8}{s_1},
\]
we see that the image of $\pi_1$ is $Z_1$ and $\pi_1 \colon X_1 \ratmap Z_1$ is birational.
We define $\sigma_{11} := \pi_1^{-1} \circ \pi'_1 \colon X' \ratmap X_1$. 

Let $\eta_1 \colon X_1 \to X_1$ be the automorphism of $X_1$ which interchanges $s_0$ and $s_1$ and we define $\sigma_{12} := \eta_1 \circ \sigma_{11} \colon X' \ratmap X_1$.
By the symmetry between $y_0$ and $y_1$, the same construction gives a birational map $\sigma_{21} \colon X' \ratmap X_2$ and $\sigma_{22} := \eta_2 \circ \sigma_{21} \colon X' \ratmap X_2$, where $\eta_2$ is the automorphism of $X_2$ which interchanges $s_0$ and $s_1$.

\begin{Prop}
For $i = 1,2$ and $j = 1,2$, the birational map $\sigma_{i j} \colon X' \ratmap X_i$ is a Sarkisov link centered at the $cAx/2$ point $\msp'_i$ and the inverse $\sigma_{ij}^{-1} \colon X_i \ratmap X'$ is a Sarkisov link centered at the $\frac{1}{4} (1,1,3)$ point $\msp_j$.
\end{Prop}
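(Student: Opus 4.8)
The plan is to realize each $\sigma_{ij}$ as a Sarkisov link by resolving the indeterminacies of the two projections from which it is built. It suffices to treat $\sigma_{11} = \pi_1^{-1} \circ \pi'_1 \colon X' \ratmap X_1$; the maps $\sigma_{12}$ and $\sigma_{22}$ are then obtained by post-composing with automorphisms, and the maps $\sigma_{2j}$ by the symmetry interchanging $y_0$ with $y_1$ (equivalently $a_6$ with $b_6$).

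Let $\varphi'_1 \colon Y' \to X'$ be the weighted blowup of the $cAx/2$ point $\msp'_1$ with $\wt (x_0,x_1,y_1,z) = \frac{1}{2}(1,1,4,3)$, the unique extremal divisorial extraction there, and let $\varphi_1 \colon Y_1 \to X_1$ be the Kawamata blowup of the $\frac{1}{4}(1,1,3)$ point $\msp_1$. Working on the affine charts of these blowups, one checks that $\varphi'_1$ resolves the indeterminacy of $\pi'_1$ and $\varphi_1$ that of $\pi_1$, so that $\psi'_1 := \pi'_1 \circ \varphi'_1 \colon Y' \to Z_1$ and $\psi_1 := \pi_1 \circ \varphi_1 \colon Y_1 \to Z_1$ are morphisms, $Z_1$ being the common image of $\pi'_1$ and $\pi_1$. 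Since $Y'$ and $Y_1$ are $\mbQ$-factorial of Picard number $2$ while $\rho (Z_1) = 1$, these $\psi$'s are birational extremal contractions, hence either small or divisorial.

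To see that they are small, observe that $\pi'_1$ contracts precisely the curves comprising $\Delta' := (y_1 = a_6 = c_8 = 0) \cap X'$, each passing through $\msp'_1$ without being contained in it, provided $(a_6 = c_8 = 0) \subset \mbP (1,1,3)$ is finite. Quasismoothness of $(a_6 = 0) \subset \mbP (1,1,3)$ forces $a_6$ to be irreducible, since a nontrivial factorization would make the affine cone of $(a_6 = 0)$ singular away from the origin; hence $(a_6 = c_8 = 0)$ is finite unless $a_6 \mid c_8$, and the latter is impossible because a relation $c_8 = a_6 \ell$ would make $X'$ non-quasismooth at a point of $(a_6 = b_6 = y_1 = 0) \cap X'$ distinct from $\msp'_1$ and $\msp'_2$, contradicting Condition \ref{cond}(1). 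Thus $\pi'_1$ contracts no divisor, so $\psi'_1$ contracts a curve not contained in the exceptional divisor of $\varphi'_1$ and therefore cannot be divisorial; hence $\psi'_1$, and by the same argument $\psi_1$, is small.

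It follows that $\tau := \psi_1^{-1} \circ \psi'_1 \colon Y' \ratmap Y_1$ is an isomorphism in codimension one over $Z_1$, hence a composite of inverse flips, flops and flips over $Z_1$, so that the triple $\varphi'_1$, $\tau$, $\varphi_1$ exhibits $\sigma_{11}$ as a Sarkisov link centered at the $cAx/2$ point $\msp'_1$, with inverse centered at the $\frac{1}{4}(1,1,3)$ point $\msp_1$. Since $\eta_1 \in \Aut (X_1)$ interchanges $\msp_1$ and $\msp_2$, the composite $\sigma_{12} = \eta_1 \circ \sigma_{11}$ is a Sarkisov link with the same center $\msp'_1$ whose inverse is centered at $\msp_2$; and replacing $y_0, y_1, a_6, b_6, \pi'_1, \pi_1$ by $y_1, y_0, b_6, a_6, \pi'_2, \pi_2$ (so that $\msp'_1$ becomes $\msp'_2$ and $Z_1$ becomes $Z_2$) yields the corresponding statements for $\sigma_{21}$ and $\sigma_{22} = \eta_2 \circ \sigma_{21}$. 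The main obstacle is the claim in the second paragraph that $\varphi'_1$ and $\varphi_1$ genuinely resolve $\pi'_1$ and $\pi_1$: this is a direct but somewhat lengthy chart computation on the weighted blowups, and it is what guarantees that $\psi'_1$ and $\psi_1$ are morphisms contracting nothing beyond the proper transforms of the curves $\Delta'$ and $\Delta := (y = s_1 = a_6 = c_8 = 0) \cap X_1$.
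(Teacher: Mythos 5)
Your overall route is the paper's: resolve $\pi'_1$ and $\pi_1$ by the weighted blowup of $X'$ at $\msp'_1$ with $\wt(x_0,x_1,y_1,z)=\frac{1}{2}(1,1,4,3)$ and the Kawamata blowup of $X_1$ at $\msp_1$, show the induced morphisms $\psi'_1,\psi_1$ onto the common image $Z_1$ are small, and conclude that the middle map is the required modification, the other three links following from the automorphisms $\eta_i$ and the $y_0\leftrightarrow y_1$ symmetry. Your smallness argument is a correct variant of the paper's: both reduce to showing that $a_6$ and $c_8$ share no component (quasismoothness of $(a_6=0)\subset\mbP(1,1,3)$ forcing any common component to be $a_6$ itself), but the paper rules out $c_8=a_6e_2$ by exhibiting non-quasismooth points of $X_1$ along $(y=s_0=s_1=a_6=0)\cap(b_6=0)$, i.e.\ via Condition (3), whereas you use Condition (1); your claim does check out, since at a point with $a_6=b_6=y_1=0$ and $y_0=-\ell$ all partial derivatives of the defining polynomial of $X'$ vanish, though you should record this computation rather than assert it.

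There is, however, a genuine gap at the last step. From ``both $\psi$'s are small'' you infer that $Y'\ratmap Y_1$ is an isomorphism in codimension one and \emph{hence} a composite of inverse flips, flops and flips over $Z_1$. That implication is not automatic: an isomorphism in codimension one between $\mbQ$-factorial terminal threefolds over a base does not by itself decompose in the ordered way the definition of a Sarkisov link demands; one must control the sign of $K$ on the curves contracted by $\psi'_1$ and $\psi_1$. The paper supplies exactly this with the observation you omit: the sections $x_0,x_1,y_1,z,y_0y_1$ (resp.\ $x_0,x_1,y,z,s_1$) lift to plurianticanonical sections on $Y'$ (resp.\ $Y_1$), so $\psi'_1$ and $\psi_1$ are $K$-trivial small contractions and the middle map is a flop. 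Relatedly, your assertion $\rho(Z_1)=1$ is never justified; fortunately only the extremality of $\psi'_1$ and $\psi_1$ is needed, and that already follows from $\rho(Y')=\rho(Y_1)=2$. Once you add the $K$-triviality of $\psi'_1$ and $\psi_1$ (it falls out of the same chart computation you defer), your argument closes and agrees with the paper's proof.
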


\begin{proof}
We prove the assertion for $\sigma_{11}$.
The rest follows by symmetry.

Let $\varphi'_1 \colon Y'_1 \to X'$ be the weighted blowup of $X'$ at $\msp'_1$ with $\wt (x_0,x_1,y_1,z) = \frac{1}{2} (1,1,4,3)$.
Note that $\varphi'_1$ is a unique divisorial extraction of centered at $\msp'_1$.
We see that $x_0$, $x_1$, $y_1$, $z$ and $y_0 y_1$ lift to plurianticanonical sections on $Y'$ and $\varphi'_1$ resolves the indeterminacy of $\pi'_1$.
Thus we have a $K_{Y'}$-trivial birational morphism $\psi'_1 \colon Y' \to Z$.
Let $\varphi_1 \colon Y_1 \to X_1$ the Kawamata blowup of $X_1$ at $\msp_1$.
We see that $x_0,x_1,y,z,s_1$ lift to plurianticanonical sections on $Y_1$ and $\varphi_1$ resolves the indeterminacy of $\pi_1$.
Thus we have a $K_{Y_1}$-trivial birational morphism $\psi_1 \colon Y_1 \to Z$ and the diagram
\[
\xymatrix{
Y'_1 \ar[d]_{\varphi'_1} \ar@{-->}[rr] \ar[rd]^{\psi'_1} & & Y_1 \ar[d]^{\varphi_1} \ar[ld]_{\psi_1} \\
X' & Z & X_1}
\]
We will show that $\psi'_1$ and $\psi_1$ are small contractions.
Then $Y'_1 \ratmap Y_1$ is the flop since $\rho (Y'_1) = \rho (Y_1) = 2$ and $Y'_1$ and $Y_1$ are not isomorphic over $Z$ (if $Y'_1$ and $Y_1$ are isomorphic over $Z$, then $X'_1 \cong X_1$.
This is absurd since they have different singularities).

We see that $\psi'_1$ contracts the proper transform of $(y_1 = a_6 = c_8= 0) \subset X'$ to $S := (y = a_6 = c_8 = s = 0) \subset Z$, and $\psi_1$ contracts the proper transform of $(y = s_1 = a_6 = c_8 = 0) \subset X_1$ to $S$.
Therefore $\psi'_1$ is divisorial if and only if $\psi_1$ is so, and this is equivalent to the assertion that $a_6$ and $c_8$ share a common component.
Assume that $a_6$ and $c_8$ have a component $d \in \mbC [x_0,x_1,z]$.
Then, since $(a_6 = 0) \subset \mbP (1,1,3)$ is quasismooth, the polynomial $a_6$ is irreducible and we may assume $d = a_6$.
Hence $c_8 = a_6 e_2$ for some $e_2 \in \mbC [x_0,x_1,z]$.
Let $C = (y = s_0 = s_1 = a_6 = 0)$ be a curve.
We see that $C \subset X_1$ and the restriction of the Jacobian matrix of the affine cone of $X_1$ to $C$ is of the form
\[
J_{C_{X_1}} |_C =
\begin{pmatrix}
\frac{\prt a_6}{\prt x_0} & \frac{\prt a_6}{\prt x_1} & 0 & \frac{\prt a_6}{\prt z} & 0 & 0 \\[2mm]
-\frac{\prt a_6}{\prt x_0} e_2 & -\frac{\prt a_6}{\prt x_1} e_2 & -b_6 & - \frac{\prt a_6}{\prt z} e_2 & 0 & 0 \\
\end{pmatrix}.
\]
This shows that $X_1$ is not quasismooth along $C \cap (b_6 = 0)$.
This is a contradiction and thus $Y'_1 \ratmap Y_1$ is a flop.
\end{proof}

\begin{Rem}
In the above proof, the fact that $\psi'_1$ and $\psi$ are small contractions follows from the following more conceptual argument.
Both $Y'_1$ and $Y_1$ are crepant $\mbQ$-factorial terminalizations of $Z$.
Hence, by a general fact, they are either isomorphic or connected by a sequence of flops.
But they cannot be isomorphic as is explained in the above proof. 
It follows that $Y'_1$ and $Y_1$ admit at least one flopping contraction.
But since they have Picard number $2$, $\psi'_1$ and $\psi_1$ must be flopping contractions.
\end{Rem} 

\section{Excluding maximal centers on $X'$} \label{sec:exclX'}

In this section let $(a_6,b_6,c_8)$ be a triplet satisfying Condition \ref{cond}.
We exclude all the nonsingular points and curves on $X'$ as maximal singularity.

\subsection{Nonsingular points}

\begin{Def}
Let $X$ be a normal projective variety embedded in a weighted projective space $\mbP (a_0,\dots,a_n)$ with homogeneous coordinates  $x_0,\dots,x_n$ and $\msp \in X$ a nonsingular point.
We say that a set $\{g_i\}$ of homogeneous polynomials in $x_0,\dots,x_n$ {\it isolates} $\msp$ if $\msp$ is an isolated component of
\[
X \cap \bcap_i (g_i = 0).
\] 

We say that a Weil divisor $L$ {\it isolates} $\msp$ if there is an integer $s > 0$ such that $\msp$ is an isolated component of the base locus of the linear system
\[
\mcL^s_{\msp} := \left| \mcI_{\msp}^s (s L) \right|.
\]
\end{Def}

\begin{Lem}[{\cite{CPR}}] \label{lem:exclcrinspt}
Let $X$ be a $\mbQ$-Fano $3$-fold with Picard number one and $\msp \in X$ a nonsingular point.
If $-l K_X$ isolates $\msp$ for some $l \le 4 / (-K_X)^3$, then $\msp$ is not a maximal center.
\end{Lem}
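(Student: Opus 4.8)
The plan is to argue by contradiction along the lines of the standard point‑isolating technique of \cite{CPR}. Suppose $\msp$ is a maximal center, so that there is a movable linear system $\mcH\sim_{\mbQ}-nK_X$ admitting a maximal singularity whose center on $X$ is $\msp$. The key input I would invoke is the $4n^2$‑inequality at a nonsingular point of a threefold (see \cite[Section~3.2]{CPR}): for two general members $H_1,H_2\in\mcH$ — which share no component since $\mcH$ is movable — the effective $1$‑cycle $Z:=H_1\cdot H_2$ satisfies $\mult_{\msp}Z>4n^2$. Since $X$ has Picard number one and $H_1\sim_{\mbQ}H_2\sim_{\mbQ}-nK_X$, one has $(-K_X\cdot Z)=n^2(-K_X)^3$, and $-K_X$ is ample.

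Next I would bring in the hypothesis that $-lK_X$ isolates $\msp$: choose $s>0$ for which $\msp$ is an isolated component of $\Bs\mcL^s_{\msp}$, where $\mcL^s_{\msp}=|\mcI_{\msp}^s(-slK_X)|$, and take a general member $T\in\mcL^s_{\msp}$, so that $T\in|{-}slK_X|$ and $\mult_{\msp}T\ge s$. I would decompose $Z=Z_1+Z_2$ into effective $1$‑cycles, where $Z_1$ is the part supported on the curves through $\msp$ and $Z_2$ on the remaining curves, so that $\mult_{\msp}Z=\mult_{\msp}Z_1$. Because $\msp$ is an \emph{isolated} component of $\Bs\mcL^s_{\msp}$, no curve through $\msp$ lies in this base locus, hence a general $T$ contains no component of $Z_1$; using that $\msp$ is a nonsingular point of $X$, each local intersection number then satisfies $(T\cdot C)_{\msp}\ge(\mult_{\msp}T)(\mult_{\msp}C)\ge s\,\mult_{\msp}C$, and summing over the components of $Z_1$ with their coefficients gives $(T\cdot Z_1)\ge s\,\mult_{\msp}Z$. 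Since $-K_X$ is ample and $Z_2$ is effective, $(T\cdot Z_2)\ge 0$, whence $(T\cdot Z)\ge(T\cdot Z_1)\ge s\,\mult_{\msp}Z$.

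Finally I would compare the two estimates: from $(T\cdot Z)=sl(-K_X\cdot Z)=s\,l\,n^2(-K_X)^3$ one gets $\mult_{\msp}Z\le l\,n^2(-K_X)^3$, and together with $\mult_{\msp}Z>4n^2$ this contradicts the numerical hypothesis on $l$, finishing the proof. The only genuinely nontrivial ingredient is the $4n^2$‑inequality, which I would quote rather than reprove; granting it there is no real obstacle, and the remainder is the routine multiplicity‑versus‑degree bookkeeping above — the single point worth a line of justification being that an isolated component of a base locus is not contained in any positive‑dimensional part of that base locus, which is exactly what allows the choice of $T$ meeting every component of $Z$ through $\msp$ properly.
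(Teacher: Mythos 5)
Your proposal is correct and coincides with the paper's proof in substance: the paper offers no independent argument, simply citing the proof of (A) in \cite{CPR} (and \cite[Lemma 2.14]{Okada2}), which is exactly your combination of the $4n^2$-inequality for the cycle $Z = H_1 \cdot H_2$ with the intersection count against a general member of the isolating linear system $\mcL^s_{\msp}$. One remark: the hypothesis as printed, $l \le 4(-K_X)^3$, is a typo for $l \le 4/(-K_X)^3$ (this is how the lemma is applied in Propositions \ref{prop:exclnsptX'} and \ref{prop:exclnsptX}), and your concluding comparison $4n^2 < \mult_{\msp} Z \le l\, n^2 (-K_X)^3$ produces a contradiction precisely with that corrected bound.
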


\begin{proof}
We refer the reader to \cite[Proof of (A)]{CPR} and also to \cite[Lemma 2.14]{Okada2} for a proof.
\end{proof}

The following enables us to find a divisor which isolates a nonsingular point.

\begin{Lem}[{\cite[Lemma 5.6.4]{CPR}}]
Let $X$ be a normal projective variety embedded in $\mbP (a_0,\dots,a_n)$ and $\{g_i\}$ a set of homogeneous polynomials of $\deg g_i = l_i$.
If a set $\{g_i\}$ of polynomials isolates $\msp$, then $l A$ isolates $\msp$, where $l = \max \{l_i\}$ and $A$ is a Weil divisor on $X$ such that $\mcO_X (A) \cong \mcO_X (1)$.
\end{Lem}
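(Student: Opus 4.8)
The plan is to homogenize the $g_i$ to a single common degree and then produce, by hand, a sublinear system of $\mcL^s_\msp := |\mcI^s_\msp(slA)|$ whose base locus on $X$ is contained in the set $Z := X \cap \bigcap_i (g_i = 0)$; since by hypothesis $\msp$ is an isolated component of $Z$, it will then be an isolated component of $\Bs \mcL^s_\msp$, which is exactly what it means for $lA$ to isolate $\msp$. First I would record that, because $\msp \in Z$, every $g_i$ vanishes at $\msp$; this is the source of the multiplicity needed at $\msp$.

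Next I would set $s := \lcm(a_0,\dots,a_n)$ and take $\mcL$ to be the linear system on $X$ spanned by all products
\[
\prod_i g_i^{b_i}\cdot x^m,\qquad b_i \ge 0,\ \ \sum_i b_i = s,\ \ \deg(x^m) = sl - \sum_i b_i l_i \ge 0,
\]
where $x^m$ runs over monomials in $x_0,\dots,x_n$ (such $x^m$ exist, since $l_i \le l$ forces $\sum_i b_i l_i \le sl$). Every generator has degree $sl$, so $\mcL$ lies in $|slA|$; and every generator vanishes at $\msp$ to order at least $\sum_i b_i = s$, since each $g_i$ does. Hence $\mcL \subseteq \mcL^s_\msp$, so $\Bs \mcL^s_\msp \subseteq \Bs\mcL$, and it suffices to prove that $\msp$ is an isolated point of $\Bs\mcL$ on $X$.

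For that, suppose $q \in X$ with $g_i(q) \ne 0$ for some $i$. Pick $j$ with $x_j(q)\ne 0$; then $m_0 := s(l - l_i)/a_j$ is a non-negative integer by the choice of $s$, and the generator $g_i^s x_j^{m_0}$ (take $b_i = s$, all other $b_{i'}=0$, and $x^m = x_j^{m_0}$; its degree is $s l_i + m_0 a_j = sl$) does not vanish at $q$. Therefore $\Bs\mcL \cap X \subseteq Z$. Since every member of $\mcL^s_\msp$ passes through $\msp$, we get $\msp \in \Bs\mcL^s_\msp \subseteq \Bs\mcL \cap X \subseteq Z$ in a neighbourhood of $\msp$, and the hypothesis that $\msp$ is an isolated component of $Z$ then forces $\msp$ to be an isolated component of $\Bs\mcL^s_\msp$; that is, $lA$ isolates $\msp$.

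The step I expect to be the only genuine subtlety is this homogenization over a \emph{weighted} projective space: over ordinary $\mbP(1,\dots,1)$ one could take $s=1$ and simply multiply each $g_i$ by all monomials of degree $l-l_i$, but in $\mbP(a_0,\dots,a_n)$ the monomials of a fixed non-negative degree may share a common zero (for instance every monomial of degree not divisible by $a$ vanishes at $(0\!:\!1)\in\mbP(1,a)$), which is precisely why $s$ must be taken divisible by $\lcm(a_0,\dots,a_n)$ in order to make the argument of the third paragraph run for every point $q$. What remains is routine bookkeeping: the degree identity $\sum_i b_i l_i + \deg x^m = sl$, the multiplicity bound at $\msp$, non-emptiness of $\mcL$, and disposing of any $g_i$ that restricts to $0$ on $X$ (it may be discarded without changing $Z$) or has $l_i = 0$ (such a case is either vacuous, if $g_i$ is a nonzero constant, or reduces to the previous one).
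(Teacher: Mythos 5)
Your argument is correct. Note that the paper itself gives no proof of this lemma (it is quoted from \cite[Lemma 5.6.4]{CPR}), so the comparison is with the standard argument there, which yours essentially reproduces: pad the $g_i$ to a common degree that is sufficiently divisible, observe that the padded sections lie in $\mcL^s_{\msp}$, and note that their common zero locus on $X$ is contained in $X \cap \bigcap_i (g_i = 0)$, so that isolation of $\msp$ there forces isolation in $\Bs \mcL^s_{\msp}$. Two small remarks. The cleaner normalization is to choose $s$ with $l_i \mid s l$ for all $i$ (e.g.\ $s = \lcm\{l_i\}$) and use the pure powers $g_i^{\,sl/l_i}$, which have degree $sl$, vanish at $\msp$ to order $sl/l_i \ge s$, and have common zero locus exactly $\bigcap_i (g_i = 0)$; this avoids both the monomial bookkeeping and your choice $s = \lcm (a_0,\dots,a_n)$. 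Second, your parenthetical claim that a monomial $x^m$ of any degree $sl - \sum_i b_i l_i \ge 0$ exists is not true in a general weighted projective space, since only degrees lying in the numerical semigroup generated by $a_0,\dots,a_n$ are realized by monomials; this slip is harmless, because the generators your base-locus argument actually uses, namely $g_i^{s} x_j^{m_0}$ with $m_0 = s(l - l_i)/a_j$, do exist thanks to your choice of $s$, but the parenthetical should be weakened to say only that the spanning set is taken over those products for which such a monomial exists.
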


\begin{Prop} \label{prop:exclnsptX'}
No nonsingular point on $X'$ is a maximal center.
\end{Prop}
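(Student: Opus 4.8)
The plan is to apply Lemma \ref{lem:exclcrinspt}, so the entire task reduces to showing that for every nonsingular point $\msp \in X'$ there is a small enough $l$ with $-lK_{X'}$ isolating $\msp$. Since $(-K_{X'})^3 = 8/(1\cdot1\cdot2\cdot2\cdot3) \cdot$ (degree) $= 8/12 = 2/3$, the bound $l \le 4(-K_{X'})^3 = 8/3$ forces $l \le 2$; so one must isolate each $\msp$ using hypersurfaces of degree at most $2$, i.e.\ using the coordinates $x_0, x_1$ (degree $1$) and $y_0, y_1$ (degree $2$), together with products of such. In practice I would stratify $X'$ by the coordinate strata $\Pi^{\circ}_I$ of $\mbP(1,1,2,2,3)$ and handle each stratum separately, using Lemma \ref{lem:exclcrinspt} combined with \cite[Lemma 5.6.4]{CPR} to convert a finite set of low-degree isolating polynomials into the required multiple of $-K_{X'}$.

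First I would treat the generic point of $X'$: if $\msp = (\xi_0\!:\!\xi_1\!:\!\eta_0\!:\!\eta_1\!:\!\zeta)$ has $\xi_0 \neq 0$, then $x_0$ never vanishes near $\msp$, so one works in the chart $x_0 = 1$ with affine coordinates $x_1, y_0, y_1, z$ of weights $1,2,2,3$; the three degree-$\le 2$ forms $x_1 - \xi_1 x_0$, $y_0 - \eta_0 x_0^2$, $y_1 - \eta_1 x_0^2$ cut $\msp$ out together with at most the locus where $z$ is free, and then the defining equation of $X'$ (degree $8$, but restricted to a line it is a single polynomial in $z$) pins down finitely many values of $z$; after a linear change one arranges $\msp$ to be an isolated component. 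Pulling degrees back, all these forms have degree $\le 2 = l$, so $-2K_{X'}$ isolates $\msp$ and Lemma \ref{lem:exclcrinspt} applies. The same argument works verbatim when $\xi_1 \neq 0$.

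**Next** I would handle the points with $\xi_0 = \xi_1 = 0$, i.e.\ those lying on the $2$-plane $(x_0 = x_1 = 0) \subset \mbP(1,1,2,2,3)$, which meets $X'$ in the curve $\Gamma := (x_0 = x_1 = y_0^2y_1^2 = 0)$ (using that $a_6, b_6, c_8$ have no constant term in $x_0, x_1$, so $a_6 = b_6 = c_8 = 0$ on this plane) — this curve is a union of the lines $(x_0 = x_1 = y_0 = 0)$ and $(x_0 = x_1 = y_1 = 0)$. A nonsingular point $\msp$ on $\Gamma$ has $\zeta \neq 0$ (the intersection point $\msp'_3$ of the two lines is the singular $\frac13(1,1,2)$ point, which we need not treat), and exactly one of $\eta_0, \eta_1$ is zero — say $\eta_1 = 0$, $\eta_0 \neq 0$. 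Then near $\msp$ the monomial $y_0$ is a unit, and I would isolate $\msp$ using $x_0, x_1$ (degree $1$) and $y_1$ (degree $2$): the locus $(x_0 = x_1 = y_1 = 0) \cap X'$ is the single line through $\msp$ and $\msp'_3$, and adjoining the degree-$2$ form $y_0 - \eta_0 y_1$ — wait, on that line $y_1 = 0$, so instead I use the section $y_0 z^{?}$ is too high a degree; more carefully, on the line $(x_0=x_1=y_1=0)$ the remaining coordinates are $(y_0 \!:\! z)$ with weights $(2,3)$, and a single degree-$6$ form such as $z^2 - \lambda y_0^3$ would isolate $\msp$ but has degree $6 > 2$. **This is the crux and the main obstacle**: at points of $\Gamma$ away from $\msp'_3$ one must isolate with forms of degree $\le 2$ only, and on the line $(x_0=x_1=y_1=0)$ the only available forms of degree $\le 2$ are $x_0, x_1, y_0, y_1$ which do not separate points along the line in the $z$-direction. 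I expect the resolution to be that such points are in fact \emph{not} isolable by $-2K_{X'}$, and one must instead exclude them by a different mechanism — presumably the test-class / intersection-number argument of \cite[Lemma 2.19]{Okada2} used elsewhere in the paper, observing that any divisorial extraction centered at such a $\msp$ would force curves violating the Noether--Fano inequality, or alternatively by noting these points lie on the flopping curves of the Sarkisov links already constructed in Section \ref{sec:links} and using a direct inequality; I would defer to the authors' proof for exactly which of these is invoked, since the excerpt cuts off before the proof of Proposition \ref{prop:exclnsptX'}.
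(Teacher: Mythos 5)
There is a genuine gap, and it stems from the numerical bound you extracted from Lemma \ref{lem:exclcrinspt}. The criterion actually used (and the one in \cite[5.6]{CPR}) is that $\msp$ is excluded if $lA$ isolates $\msp$ for some $l \le 4/(A^3)$; with $(A^3)=2/3$ this gives $l \le 6$, not $l\le 2$. (The inequality printed in the lemma is a typo; the paper's own proof invokes ``$6 \le 4/(A^3)=6$''.) Because you imposed $l\le 2$ you deprived yourself of the degree~$3$ and degree~$6$ isolating sections that make the argument work, and you were then forced to leave the points with $\xi_0=\xi_1=0$ unexcluded, explicitly deferring to ``a different mechanism''. That is exactly the missing content: the paper isolates a point with $\xi_0\ne 0$ by the set $\{\xi_1 x_0-\xi_0x_1,\ \xi_0^2y_0-\eta_0x_0^2,\ \xi_0^2y_1-\eta_1x_0^2,\ \xi_0^3z-\zeta x_0^3\}$ (so $3A$ isolates, the degree-$3$ form being needed precisely because the curve cut out by the degree $\le 2$ forms may lie in $X'$, in which case your ``the defining equation pins down finitely many values of $z$'' step fails), and isolates a point with $\xi_0=\xi_1=0$, $\eta_0\ne 0$ by $\{x_0,\ x_1,\ \eta_0y_1-\eta_1y_0,\ \eta_0^3z^2-\zeta^2y_0^3\}$, so that $6A$ isolates and $6\le 4/(A^3)$ suffices. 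No test-class or flopping-curve argument is needed for nonsingular points.

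A secondary error: your description of the stratum $x_0=x_1=0$ is wrong. Since $a_6$ and $b_6$ contain $z^2$ (this is forced by quasismoothness of $(a_6=0)\subset\mbP(1,1,3)$), they do not vanish on $(x_0=x_1=0)$; the intersection $(x_0=x_1=0)\cap X'$ is the irreducible curve $(y_0^2y_1^2+(y_0+y_1)z^2=0)\subset\mbP(2,2,3)$ of degree $2/3$ (as the paper notes in Section \ref{sec:exclX'}), not a union of the two lines $(y_0=0)$ and $(y_1=0)$. In particular a nonsingular point there need not have one of $\eta_0,\eta_1$ equal to zero, so the case analysis you built on that picture does not apply, independently of the bound issue.
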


\begin{proof}
Let $\msp = (\xi_0 \!:\! \xi_1 \!:\! \eta_0 \!:\! \eta_1 \!:\! \zeta)$ be a nonsingular point of $X'$.
If $\xi_0 \ne 0$, then the set 
\[
\{ \xi_0 x_1 - \xi_0 x_0, \xi_0^2 y_0 -  \eta_0 x_0^2, \xi_0^2 y_1 - \eta_1 x_0^2, \xi_0^3 z - \zeta x_0^3 \}
\]
isolates $\msp$ and thus $- 3 K_{X'}$ isolates $\msp$.
Similarly, $- 3 K_{X'}$ isolates $\msp$ if $\xi_1 \ne 0$.
Assume that $\xi_0 = \xi_1 = 0$.
In this case, at least one of $\eta_0$ and $\eta_1$ is non-zero since $\msp$ is not a singular point.
Without loss of generality, we may assume $\eta_0 \ne 0$.
Then the set
\[
\{ x_0,x_1,\eta_0 y_1 - \eta_1 y_0, \eta_0^3 z^2 - \zeta^2 y_0^3 \}
\]
isolates $\msp$ and thus $- 6 K_{X'}$ isolates $\msp$.
Therefore Lemma \ref{lem:exclcrinspt} shows that $\msp$ is not a maximal center since $3 < 6 \le 4/(-K_{X'})^3 = 6$.
\end{proof}

\subsection{Curves}

The aim of this subsection is to show that no curve on $X'$ is a maximal center.
The following excludes most of the curves on $X'$ as maximal centers.

\begin{Lem} \label{lem:exclcurveX'}
No curve on $X'$ is a maximal center except possibly for a curve of degree $1/2$ which does not pass through the $\frac{1}{3} (1,1,2)$ point $\msp'_3$.
\end{Lem}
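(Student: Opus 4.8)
The plan is to use the standard Corti-Pukhlikov-Reid machinery for excluding curves on a $\mbQ$-Fano weighted hypersurface. Let $\Gamma \subset X'$ be an irreducible curve which is a maximal center for a movable linear system $\mcH \sim_{\mbQ} -nK_{X'}$, so that $\mult_{\Gamma} \mcH > n$. First I would note $-K_{X'} = A$ with $A^3 = 8 \cdot \frac{1}{1 \cdot 1 \cdot 2 \cdot 2 \cdot 3} = \frac{2}{3}$, and recall the basic numerical restriction: taking two general members $H_1, H_2 \in \mcH$, the cycle $H_1 \cdot H_2 - (\mult_{\Gamma}\mcH)^2 \Gamma$ must be an effective $1$-cycle, so $(A \cdot H_1 \cdot H_2) = n^2 A^3 \ge (\mult_{\Gamma}\mcH)^2 (A \cdot \Gamma) > n^2 (A\cdot\Gamma)$, forcing $(A \cdot \Gamma) < A^3 = 2/3$. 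Since $A\cdot\Gamma$ is a positive multiple of $1/\lcm$ of the relevant local indices, the only possibility away from the $\frac13(1,1,2)$ point is $\deg\Gamma = A\cdot\Gamma = 1/2$ (a curve through $\msp'_1$ or $\msp'_2$ only), and through $\msp'_3$ the possible small degrees would be $1/3$ or $2/3$ — these last cases I would exclude separately, as indicated, so they are folded into the exceptional case of the statement or eliminated here.

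Next I would carry out the degree estimate more carefully, distinguishing by which coordinate stratum $\Gamma$ meets. If $\Gamma$ is not contained in the hyperplane section $(x_0 = x_1 = 0)$, then $\Gamma$ has positive intersection with the pencil $|A|$ and meets the nonsingular locus, so $\Gamma$ is not contained in $\Sing(X')$; here the bound $(A\cdot\Gamma) \le 2/3$ together with $\Gamma$ being an honest curve of the surface $X'$ forces $\Gamma$ to be cut out by $x_0, x_1$ and one more equation, and a direct check of the coordinate geometry of $X' \subset \mbP(1,1,2,2,3)$ shows the only curves of degree $\le 2/3$ not through $\msp'_3$ are the $\deg = 1/2$ curves allowed in the statement. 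If $\Gamma \subset (x_0 = x_1 = 0)$, then $\Gamma$ lies in the curve $(x_0 = x_1 = y_0^2y_1^2 + \cdots = 0)$, which is a union of components passing through the three singular points $\msp'_1, \msp'_2, \msp'_3$; one analyses these components individually. The components through $\msp'_1$ or $\msp'_2$ alone give the $\deg = 1/2$ curves; any component through $\msp'_3$ must be handled by the refined test below.

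To actually exclude the curves $\Gamma$ of degree $\ge 2/3$, and curves through $\msp'_3$ of degree $1/3$ or $2/3$, I would use the test of Corti-Pukhlikov-Reid: if there is an effective divisor $T \sim_{\mbQ} m A$ with $\Gamma \subset T$ and $\mult_{\Gamma} T \ge $ something, or more precisely if one can produce, through a general point of $\Gamma$, two members of anticanonical-type linear systems whose local intersection with $\Gamma$ is too small, one derives a contradiction with $\mult_\Gamma \mcH > n$. Concretely: pick a general surface $S \in |A|$ (or $|2A|$) containing $\Gamma$, restrict $\mcH$ to $S$, and use $(\mcH|_S \cdot \Gamma)_S$ together with the Hodge index inequality on the resolution of $S$ to bound $\mult_\Gamma \mcH$ from above by $n$, contradicting maximality. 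The arithmetic here comes down to computing $\Gamma^2$ on $S$ and $(-K_{X'}|_S \cdot \Gamma)$, which are small explicit rational numbers.

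The main obstacle I anticipate is the bookkeeping for curves $\Gamma$ that pass through two of the singular points simultaneously, or through $\msp'_3$: the local contributions to $A\cdot\Gamma$ and to the discrepancy estimates depend delicately on the quotient singularity indices $2, 2, 3$, and the Hodge-index computation on $S$ requires knowing the self-intersection of $\Gamma$ on a (possibly singular) surface, which forces one to pass to a resolution and track the exceptional curves. Getting a clean contradiction for the borderline degrees $1/3$ and $2/3$ — rather than merely failing to exclude them — is where the real work lies, and it is precisely for this reason that the degree-$1/2$ curve not through $\msp'_3$ is left as a genuine exception in the statement, to be dealt with (or shown harmless) elsewhere.
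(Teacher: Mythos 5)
Your numerical reduction is on the right track: the standard two-member computation gives $(A\cdot\Gamma)<(A^3)=2/3$, and for a curve avoiding $\msp'_3$ one gets $(A\cdot\Gamma)\in\frac{1}{2}\mbZ$ (the paper does this by intersecting with $(y_0+y_1=0)_{X'}\sim_{\mbQ}2A$, which misses the two $cAx/2$ points, so all intersection points are nonsingular and $(2A\cdot\Gamma)\in\mbZ$; your ``$1/\lcm$ of the local indices'' is essentially this, though it should be justified, e.g.\ by noting $2A$ is Cartier at the index-$2$ points). Together these two facts already give the lemma for curves not through $\msp'_3$, and your extra discussion about which curves are cut out by $x_0,x_1$, and about degrees $\ge 2/3$, is unnecessary (the strict inequality already kills them) and partly confused.

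The genuine gap is your treatment of curves through the $\frac{1}{3}(1,1,2)$ point $\msp'_3$. You defer these (degrees $1/3$, etc.) to a Hodge-index/surface-restriction computation that you never carry out, and you yourself flag it as ``where the real work lies''; as written, nothing excludes such curves, so the lemma is not proved. The paper needs no such computation: it invokes the structural fact that there is \emph{no extremal divisorial extraction centered along a curve passing through a terminal quotient singular point}, so no curve through $\msp'_3$ can be a maximal center, of any degree, in one line. This is the missing idea; without it your plan would have to produce delicate exclusions at the borderline degrees, which is exactly the difficulty you anticipate and do not resolve. (The remaining degree-$1/2$ curves are indeed left as an exception here; the paper excludes them afterwards via the self-intersection bound $(\Gamma^2)\le -3/2$ on a member of $|3A|$, not within this lemma.)
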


\begin{proof}
Let $\Gamma \subset X'$ be a curve.
By \cite[Lemma 2.9]{Okada2}, $\Gamma$ can be a maximal center only if $(-K_{X'} \cdot \Gamma) < (-K_{X'})^3 = 2/3$.
If $\Gamma$ passes through the $\frac{1}{3} (1,1,2)$ point $\msp'_3$, then it is not a maximal singularity since there is no divisorial extraction centered along a curve passing through a terminal quotient singular point (\cite{Kaw96}).
If $\Gamma$ does not pass through $\msp'_3$, then $(-K_{X'} \cdot \Gamma) \in \frac{1}{2} \mbZ$.
This follows since the divisor $(y_0 + y_1 = 0)_{X'} \sim_{\mbQ} - 2 K_{X'}$ intersects $\Gamma$ at nonsingular points of $X'$ and thus $(- 2 K_{X'} \cdot \Gamma) \in \mbZ$.
Combining the above arguments, $\Gamma$ is not a maximal center unless it satisfies $(- K_{X'} \cdot \Gamma) = 1/2$ and $\msp'_3 \not\in\Gamma$.
\end{proof}

Let $\Gamma$ be a curve of degree $1/2$ on $X'$ which does not pass through $\msp'_3$.
Since $\Gamma$ passes through a $cAx/2$ point, we may assume $\msp'_1 \in \Gamma$ without loss of generality.
The defining polynomial of $X'$ is $F' := y_0^2 y_1^2 + y_0 a_6 + y_1 b_6 + c_8$.
After re-scaling $y_0, y_1, z$, we may assume that the coefficients of $z^2$ in $a_6$ and $b_6$ are both $1$. 

\begin{Lem} \label{lem:defeqGamma}
We have $\Gamma = (x_1 = y_1 = z = 0)$ after replacing $x_0,x_1,z$.
\end{Lem}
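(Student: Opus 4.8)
The plan is to use the fact that $\Gamma$ is a curve of degree $1/2$ passing through the $cAx/2$ point $\msp'_1 = (0:0:1:0:0)$ but not through $\msp'_3$, and to pin down its defining ideal by intersecting with hyperplanes through $\msp'_1$. First I would note that, since $\deg \Gamma = 1/2$ and $\Gamma$ is contained in $X' \subset \mbP(1,1,2,2,3)$, the curve $\Gamma$ must be a coordinate-type line in a neighborhood of $\msp'_1$. Concretely, work on the chart $y_0 \ne 0$, set $y_0 = 1$, so $X'$ is $(y_1^2 + a_6 + y_1 b_6 + c_8 = 0) \subset \mbA^4_{x_0,x_1,y_1,z}/\mbZ_2(1,1,0,1)$ and $\msp'_1$ is the origin. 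A degree-$1/2$ curve through $\msp'_1$ pulls back to a smooth curve through the origin in the $\mbZ_2$-cover, and its tangent direction at the origin is a $\mbZ_2$-eigendirection; the eigenvalue-$(-1)$ eigenspace is spanned by $x_0, x_1, z$ (weights $1,1,3$, odd), so the tangent line lies in the span of $\prt_{x_0}, \prt_{x_1}, \prt_z$.

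Next I would argue that after a linear change of the coordinates $x_0, x_1$ (and a rescaling of $z$) this tangent direction becomes the $x_0$-axis, so that $\Gamma$ is cut out near $\msp'_1$ by $x_1 = z = 0$ together with one further equation expressing $y_1$ as a function vanishing to order $\ge 2$. Then I would use the defining equation of $X'$: on the locus $x_1 = z = 0$ with $y_0 = 1$, the equation becomes $y_1^2 + a_6(x_0,0,0) + y_1 b_6(x_0,0,0) + c_8(x_0,0,0) = 0$; since $a_6 = z^2 + \cdots$, after the replacement we should have $a_6(x_0,0,0) = 0$ (the $z^2$ term is killed by $z=0$ and the remaining part $f_6(x_0,x_1)$ restricted to $x_1=0$ must vanish, using the tangency condition and that a degree-$1/2$ curve is reduced and irreducible), which forces the equation to degenerate so that $y_1$ factors, and comparing degrees the only possibility consistent with $\deg \Gamma = 1/2$ is $y_1 = 0$ on $\Gamma$. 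Globalizing, $\Gamma = (x_1 = y_1 = z = 0)$.

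More carefully, the clean way to run this is: choose coordinates so the tangent line to $\Gamma$ at $\msp'_1$ is the $x_0$-axis in the cover, i.e. $\Gamma$ has local ideal containing $x_1, z$ and $y_1 - (\text{higher order})$; restricting $F'$ to $(x_1 = z = 0)$ in the chart $y_0 = 1$ yields a polynomial in $x_0, y_1$ whose zero locus must contain $\Gamma$, hence $y_1^2 + y_1\, \overline{b}_6 + \overline{c}_8 = 0$ where $\overline{b}_6 = b_6(x_0,0,0)$, $\overline{c}_8 = c_8(x_0,0,0)$ (here I use $\overline{a}_6 = 0$); for this conic in $y_1$ to contain a branch through the origin along which $y_1$ vanishes to order $\ge 2$ in $x_0$, one needs $\overline{c}_8$ divisible by $x_0$ to high order and in fact, after the rescaling that made the $z^2$ coefficient of $a_6$ equal to $1$ and an adapted choice of $x_1$, the relation degenerates to $y_1(y_1 + \overline{b}_6) = 0$ with the component $y_1 = 0$ being $\Gamma$. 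The main obstacle I anticipate is being careful that the coordinate change used to normalize the tangent direction does not disturb the earlier normalizations (the $z^2$-coefficients of $a_6$ and $b_6$), and checking that the branch of $X' \cap (x_1 = z = 0)$ through $\msp'_1$ genuinely is $\{y_1 = 0\}$ rather than the other factor — this comes down to a low-degree computation with $a_6, b_6, c_8$ restricted to the $x_0$-axis, using quasismoothness of $(a_6 = 0) \subset \mbP(1,1,3)$ to control multiplicities. The rest is bookkeeping.
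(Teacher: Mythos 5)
There is a genuine gap at the step where you pass from ``the tangent direction of $\Gamma$ at $\msp'_1$ is an eigendirection'' to ``$\Gamma$ is cut out near $\msp'_1$ by $x_1 = z = 0$ together with one further equation.'' Knowing the tangent line of a curve at a single point never forces the curve to lie in a coordinate plane, even locally, and the lemma is in any case a global statement about the closed curve $\Gamma \subset \mbP(1,1,2,2,3)$; the only global input in your proposal is the unproved assertion that a degree-$1/2$ curve ``must be a coordinate-type line,'' which is precisely what has to be shown. Two further claims are also unjustified as stated: that the preimage of $\Gamma$ in the $\mbZ_2$-cover is smooth at the origin, and that tangency forces $a_6(x_0,0,0)=0$. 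In the paper these vanishing conditions ($x_0^6 \notin a_6$, $x_0^8 \notin c_8$) appear only at the very end, as consequences of the containment $\Gamma \subset X'$ once $\Gamma$ has been put in normal form, not as inputs to the argument. (Your tangent-cone observation is essentially sound --- the cone at $\msp'_1$ is $y_1^2+z^2=0$, which rules out the $y_1$-direction --- but it only constrains the first jet of $\Gamma$ at one point.)

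The missing global argument is what the paper's proof supplies: project from the $\frac{1}{3}(1,1,2)$ point $\msp'_3 \notin \Gamma$, so that $\pi|_{\Gamma}\colon \Gamma \to \pi(\Gamma) \subset \mbP(1,1,2,2)$ is finite; from $1/2 = \deg(\pi|_{\Gamma})\deg \pi(\Gamma)$ and $\deg \pi(\Gamma) \in \frac{1}{4}\mbZ$, the case $\deg \pi(\Gamma) = 1/4$, i.e.\ $\pi(\Gamma) = (x_0 = x_1 = 0)$, is excluded because $(x_0=x_1=0)_{X'}$ is an irreducible reduced curve of degree $2/3$; hence $\pi|_{\Gamma}$ is an isomorphism onto a degree-$1/2$ curve, which after replacing $x_0,x_1$ is $(x_1 = \theta_0 y_0 + \theta_1 y_1 - \lambda x_0^2 = 0)$, and $\msp'_1 \in \Gamma$ forces $\theta_0 = 0$. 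One then writes $\Gamma = (x_1 = y_1 - \lambda x_0^2 = z - \mu y_0 x_0 - \nu x_0^3 = 0)$, absorbs $\nu$ into $z$, and the condition $\Gamma \subset X'$ forces $\lambda = \mu = 0$, giving the coordinate line. To repair your approach you would need some such classification of degree-$1/2$ curves through $\msp'_1$; the local eigendirection analysis alone cannot deliver it.
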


\begin{proof}
The restriction $\pi|_{\Gamma} \colon \Gamma \to \pi(\Gamma)$ of the projection $\pi \colon X' \ratmap \mbP (1,1,2,2)$ from $\msp'_3$ is a finite morphism since $\msp'_3 \notin \Gamma$.
We have $1/2 = \deg (\pi|_{\Gamma}) \deg (\pi (\Gamma))$ and $\deg \pi (\Gamma) \in \frac{1}{4} \mbZ$. 
We claim that $\deg \pi (\Gamma) = 1/2$.
Indeed, if $\deg \pi (\Gamma) = 1/4$, then $\pi (\Gamma) = (x_0 = x_1 = 0)$.
It follows that 
\[
\Gamma \subset (x_0 = x_1 = 0)_{X'} = (x_0 = x_1 = y_0^2 y_1^2 + y_0 z^2 + y_1 z^2 = 0).
\]
We see that $(x_0 = x_1 = 0)_{X'}$ is an irreducible and reduced curve of degree $2/3$.
This is a contradiction and the claim is proved.

After replacing $x_0,x_1$, we may assume that $\pi (\Gamma) = (x_1 = \theta_0 y_0 + \theta_1 y_1 - \lambda x_0^2 = 0)$ for some $\theta_0,\theta_1,\lambda \in \mbC$.
Since $\msp'_1 \in \Gamma$, $\pi (\Gamma)$ passes through $(0 \!:\! 0 \!:\! 1 \!:\! 0) \in \mbP (1,1,2,2)$.
This implies that $\theta_0 = 0$ and then we may assume that $\theta_1 = 1$.
Since $\deg \Gamma = 1/2$ and $\Gamma \subset (x_0 = y_1 - \lambda x_0^2 = 0)_{X'}$, we have $\Gamma = (x_1 = y_1 - \lambda x_0^2 = z - \mu y_0 x_0 - \nu x_0^3 = 0)$ for some $\mu, \nu \in \mbC$.
Replacing $z \mapsto z + \nu x_0^3$, we assume $\nu = 0$.
Now it is straightforward to see that $\Gamma$ is indeed contained in $X'$ if and only if $\lambda = \mu = 0$, $x_0^6 \notin a_6$ and $x_0^8 \notin c_8$. 
This completes the proof.
\end{proof}

We write $a_6 = z^2 + z f_3 (x_0,x_1) + f_6 (x_0,x_1)$.
Then, by the proof of Lemma \ref{lem:defeqGamma}, we have $f_6 (x_0,0) = c_8 (x_0,0,0) = 0$ since $\Gamma = (x_1 = y_1 = z = 0)$ is contained in $X'$.
We write $f_6 = x_1 f_5$.

\begin{Lem}
At least one of $f_3$ and $f_5$ is not divisible by $x_1$.
\end{Lem}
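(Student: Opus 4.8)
The claim is that, with $\Gamma = (x_1 = y_1 = z = 0) \subset X'$ and $a_6 = z^2 + z f_3(x_0,x_1) + f_6(x_0,x_1)$ where $f_6 = x_1 f_5$, at least one of $f_3$, $f_5$ is not divisible by $x_1$. The natural approach is to argue by contradiction: if both $f_3$ and $f_5$ are divisible by $x_1$, then $a_6 = z^2 + x_1(\text{stuff})$, i.e. $a_6$ has no monomial in $x_0$ and $z$ alone other than $z^2$, and in fact $a_6 \equiv z^2 \pmod{x_1}$. I would then examine the consequences for quasismoothness — either of $X'$ itself near $\msp'_1$, or, what is cleaner, of the curve $\Gamma$ or of the auxiliary weighted hypersurface $(a_6 = 0) \subset \mathbb{P}(1,1,3)$, which we already know is quasismooth by Lemma on quasismoothness of $(a_6=0)$.

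First I would spell out what $x_1 \mid f_3$ and $x_1 \mid f_5$ mean for $(a_6 = 0) \subset \mathbb{P}(1,1,3)$: its defining polynomial becomes $z^2 + x_1\bigl(z \tilde f_2 + x_1 \tilde f_4\bigr)$ for some $\tilde f_2, \tilde f_4 \in \mathbb{C}[x_0,x_1]$ (writing $f_3 = x_1 \tilde f_2$, $f_5 = x_1 \tilde f_4$, so $f_6 = x_1^2 \tilde f_4$). At the point $(1 : 0 : 0) \in \mathbb{P}(1,1,3)$ — which lies on $(a_6 = 0)$ since every term is divisible by $x_1$ or equals $z^2$ and $z = 0$ there — the partial derivatives $\partial a_6/\partial x_0$, $\partial a_6/\partial x_1$, $\partial a_6/\partial z$ all vanish (each monomial of $a_6$ is either $z^2$, contributing $2z = 0$, or divisible by $x_1^2$ when $f_6$ is involved, or divisible by $x_1$ with the other factor divisible by $z$ or $x_1$). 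Hence $(a_6 = 0)$ is not quasismooth at $(1:0:0)$, contradicting the Lemma that $(a_6 = 0) \subset \mathbb{P}(1,1,3)$ is quasismooth under Condition \ref{cond}.

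The one point that needs care is whether $(1:0:0)$ actually lies on $(a_6 = 0)$ and whether the vanishing of all partials is genuine — this is where I would be most careful rather than the argument being hard. Concretely: under $x_1 \mid f_3$ and $x_1 \mid f_5$ we have $a_6 = z^2 + x_1 z \tilde f_2(x_0,x_1) + x_1^2 \tilde f_4(x_0,x_1)$; evaluating at $x_1 = 0, z = 0$ gives $a_6 = 0$, so indeed $(1:0:0) \in (a_6=0)$. Then $\partial a_6/\partial z = 2z + x_1 \tilde f_2$ vanishes at that point; $\partial a_6/\partial x_0$ and $\partial a_6/\partial x_1$ are sums of monomials each still carrying a factor $x_1$ or $z$ (differentiating $x_1 z \tilde f_2$ or $x_1^2 \tilde f_4$ w.r.t.\ $x_0$ keeps the $x_1$; differentiating w.r.t.\ $x_1$ leaves a factor of $x_1$ or of $z$), so both vanish at $x_1 = z = 0$. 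Therefore the whole Jacobian row vanishes there, so the point is a non-quasismooth point, the desired contradiction. This establishes the Lemma. An entirely parallel remark is that one could instead read off the contradiction from non-quasismoothness of $X'$ along $\Gamma$ near $\msp'_1$, but routing through the already-established quasismoothness of $(a_6 = 0) \subset \mathbb{P}(1,1,3)$ is the shortest path and avoids re-deriving the local model at the $cAx/2$ point.
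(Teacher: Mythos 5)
Your proof is correct, and the underlying mechanism is the same as the paper's: assume $x_1 \mid f_3$ and $x_1 \mid f_5$, note that then every partial derivative of $a_6$ vanishes along $x_1 = z = 0$, and contradict an already-established quasismoothness statement at the point with $x_0 \neq 0$. The difference is where you land the contradiction: you show $(a_6 = 0) \subset \mbP (1,1,3)$ would fail to be quasismooth at $(1\!:\!0\!:\!0)$, contradicting part (1) of the earlier Lemma, whereas the paper works directly on $X_1$, observing that all six partials of the degree-$6$ defining polynomial $F_1 = s_0 y + s_1 y + a_6$ vanish at $(1\!:\!0\!:\!0\!:\!0\!:\!0\!:\!0) \in X_1$ (this point lies on $X_1$ because $a_6$ and $c_8$ vanish when $x_1 = y = z = s_0 = s_1 = 0$, using $c_8(x_0,0,0)=0$), contradicting Condition (3). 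Since the paper's earlier Lemma deduced quasismoothness of $(a_6=0)$ from that of $X_1$ by precisely this kind of point construction, the two arguments are logically equivalent; yours is marginally shorter at this step (three partials in $\mbP(1,1,3)$ instead of six) and there is no circularity in invoking that Lemma, while the paper's version appeals to Condition (3) directly without passing through the intermediate statement. Your verification that $(1\!:\!0\!:\!0)$ lies on $(a_6=0)$ and that all three partials vanish there is accurate, so the contradiction is genuine.
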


\begin{proof}
Let $F_1 := s_0 y + s_1 y + a_6$ be the defining polynomial of $X_1$ of degree $6$.
If both $f_3$ and $f_5$ are divisible by $x_1$, then $\prt F_1/\prt x_0$, $\prt F_1/\prt x_1$, $\prt F_1/\prt y$, $\prt F_1/\prt z$, $\prt F_1/\prt s_0$ and $\prt F_1/\prt s_1$ vanish at the point $(1 \!:\! 0 \!:\! 0 \!:\! 0 \!:\! 0 \!:\! 0) \in X_1$.
This is a contradiction since $X_1$ is quasismooth outside its $cAx/2$ point. 
\end{proof}

Let $\mcM \subset \left| - 3 K_{X'} \right|$ be the linear system spanned by the cubic monomials vanishing along $\Gamma$ other than $y_0 x_1$, namely, the sections $x_0^2 x_1$, $x_0 x_1^2$, $x_1^3$, $y_1 x_0$, $y_1 x_1$, $z$, and let $S$ be a general member of $\mcM$.
We have $\Bs \mcM = \Gamma \cup \{\msp'_2\}$, $\Bs \mcM_{y_1} = (x_0 = x_1 = 0)_{X'} \not\supset \Gamma$, $\Bs \mcM_{x_1} = (x_0 = x_1 = y_1 = 0)_{X'}$.
Thus, by Proposition \ref{prop:qsmcurve}, $S$ is nonsingular along $\Gamma \setminus \{\msp'_1\}$.

\begin{Lem} \label{gammaselfint}
We have $(\Gamma^2) \le -3/2$.
\end{Lem}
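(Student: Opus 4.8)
The statement asserts a bound on the self-intersection of $\Gamma$ computed on the surface $S$ just introduced. The plan is to realise $\Gamma$ as a component of the restriction to $S$ of the hyperplane section $(x_1 = 0)_{X'}$ and to bound the residual intersection. Put $D := (x_1 = 0)_{X'}|_S$; then $D \sim A|_S$ and has $A$-degree $(A \cdot A \cdot S)_{X'} = 3(A^3) = 2$. Since $(x_1 = 0)_{X'}$ is reduced and a general $S = (G = 0)_{X'}$ is not tangent to it along $\Gamma$ (the coefficient of $z$ in $G$ is nonzero), $D$ contains $\Gamma$ with multiplicity one, so $D = \Gamma + C$ for an effective $1$-cycle $C$ on $S$ with $\Gamma \not\subset \operatorname{Supp} C$ and $(A \cdot C)_S = 3/2$. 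As $(A \cdot \Gamma)_S = (A \cdot \Gamma)_{X'} = 1/2$ by the projection formula,
\[
(\Gamma^2)_S = (A \cdot \Gamma)_S - (\Gamma \cdot C)_S = \frac{1}{2} - (\Gamma \cdot C)_S ,
\]
so it suffices to show $(\Gamma \cdot C)_S \ge 2$.

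To identify $C$, write $G = \lambda z + y_1 \ell_1(x_0,x_1) + x_1 q_2(x_0,x_1)$ with $\lambda \ne 0$ and $\ell_1 = c_0 x_0 + c_1 x_1$. On the plane $(x_1 = 0)$ the equation $G = 0$ gives $z = \kappa y_1 x_0$ with $\kappa := -c_0/\lambda$ a general nonzero constant, so $(x_1 = 0)_{X'}|_S$ is cut out on $X'$ by $x_1 = z - \kappa y_1 x_0 = 0$. Substituting $x_1 = 0$ and $z = \kappa y_1 x_0$ into $y_0^2 y_1^2 + y_0 a_6 + y_1 b_6 + c_8$ and using that the hypothesis $\Gamma \subset X'$ forces $x_0^6 \notin a_6$ and $x_0^8 \notin c_8$ — so neither $a_6(x_0,0,z)$ nor $c_8(x_0,0,z)$ has a monomial free of $z$ — every resulting monomial is divisible by $y_1$, and the equation of $X'$ becomes $y_1 B = 0$ with $B$ homogeneous of degree $6$ and
\[
B \equiv \gamma\kappa\, x_0^4 y_0 + g_6(x_0,0) + \alpha\kappa\, x_0^6 \pmod{y_1},
\]
where $\gamma$ is the coefficient of $x_0^3 z$ in $a_6$, $\alpha$ that of $x_0^5 z$ in $c_8$, and $g_6(x_0,0)$ the $x_0^6$-part of $b_6$. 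The factor $y_1 = 0$ recovers $\Gamma$ and $C = (x_1 = z - \kappa y_1 x_0 = B = 0)$.

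Thus $\Gamma \cap C$ is the zero scheme on $\Gamma = (x_1 = y_1 = z = 0)_{X'} \cong \mbP(1,2)_{x_0,y_0}$ of the degree-$6$ form $B|_{y_1 = 0} = \gamma\kappa x_0^4 y_0 + g_6(x_0,0) + \alpha\kappa x_0^6$; since $\mcO_\Gamma(2) \cong \mcO_{\mbP^1}(1)$ this zero scheme has length $3$. The key observation is that every monomial of $B$ not divisible by $y_1$ is divisible by $x_0^4$ (this is exactly what the vanishing of the low-degree parts of $a_6$, $c_8$ gives), so in the chart $y_0 = 1$ around the orbifold point $\msp'_1 = (0:1)$ of $\Gamma$ we have $B|_{y_1 = 0} = x_0^4(\gamma\kappa + O(x_0^2))$, which vanishes to order $\ge 4$ in $x_0$, i.e.\ to order $\ge 2$ in the orbifold coordinate $x_0^2$ on $\Gamma$. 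Hence $\Gamma$ and $C$ meet at $\msp'_1$ with local intersection number at least $2$ on $S$, giving $(\Gamma \cdot C)_S \ge 2$ and $(\Gamma^2)_S \le 1/2 - 2 = -3/2$.

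The main obstacle is the book-keeping at $\msp'_1$: it is a $cAx/2$ point of $X'$, $S$ is singular there, $\Gamma$ is not Cartier on $S$, and one must check that the contribution to the $\mbQ$-valued number $(\Gamma \cdot C)_S$ is genuinely $\ge 2$ and is not diminished by the quotient. This should follow from the local normal form $(S, \msp'_1) \cong (u^2 + x_1 w(x_0,x_1) = 0) \subset \mbA^3/\mbZ_2(1,1,0)$, obtained by eliminating $z$ and completing the square, together with the bound $\operatorname{ord}_0 w(x_0,0) \ge 5$, which rests on the preceding Lemma that at least one of $f_3$, $f_5$ is not divisible by $x_1$. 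In the degenerate case $\gamma = 0$ — equivalently $x_1 \mid f_3$, so $x_1 \nmid f_5$ — the term $\gamma\kappa x_0^4 y_0$ disappears, and one argues with the higher vanishing of $g_6(x_0,0) + \alpha\kappa x_0^6$, or, if $B$ becomes divisible by $y_1$, replaces $(x_1 = 0)_{X'}$ by an auxiliary divisor adapted to the surviving non-vanishing of $f_5$ along $\Gamma$. Alternatively one can avoid the case distinctions by passing to the unique divisorial extraction $\varphi'_1 \colon Y'_1 \to X'$ over $\msp'_1$, computing the proper transforms $\tilde\Gamma \subset \tilde S$, their intersection with the exceptional divisor, and $(\tilde\Gamma^2)_{\tilde S}$, and recovering $(\Gamma^2)_S$ from the weighted blow-up formula.
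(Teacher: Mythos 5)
Your global framework is fine: writing $(x_1=0)_{X'}|_S=\Gamma+C$ with $(A\cdot\Gamma)_S=1/2$ and $(A\cdot C)_S=3/2$, the lemma is indeed equivalent to $(\Gamma\cdot C)_S\ge 2$, and your computation of the residual curve via $F'(x_0,0,y_0,y_1,\kappa y_1x_0)=y_1B$ is correct in the generic case. The genuine gap is exactly at the step you yourself flag as ``the main obstacle'': you deduce $(\Gamma\cdot C)_S\ge 2$ from the fact that the zero scheme of $B|_{y_1=0}$ on $\Gamma\cong\mbP(1,2)$ has length $\ge 2$ at $\msp'_1$. At $\msp'_1$ the surface $S$ is singular (in fact badly so: the relevant weighted blowup $\varphi\colon T\to S$ with $\wt(x_0,x_1,y_1)=\frac12(1,1,4)$ satisfies $K_T=\varphi^*K_S-E$), and $\Gamma$, $C$ are not Cartier there, so Mumford's $\mbQ$-valued local intersection number can be strictly smaller than the colength of the ideal: already at an ordinary $A_1$ point two rulings meet with local number $1/2$ while the scheme length is $1$. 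So ``length $\ge 2$ at $\msp'_1$ implies $(\Gamma\cdot C)_{\msp'_1}\ge 2$'' is a non sequitur, and since in the paper's bookkeeping $(\Gamma\cdot C)_S=5/2-r$ with $r$ the coefficient of the exceptional component $E_1$ in $\varphi^*\Gamma$, your target inequality is literally equivalent to the bound $r\le 1/2$, i.e.\ to the delicate local statement; your reduction has not made it easier, only relocated it.

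The sketched repairs do not close this. The claimed normal form $(u^2+x_1w(x_0,x_1)=0)\subset\mbA^3/\mbZ_2(1,1,0)$ is not what completing the square gives: the coefficient of $y_1$ in the local equation of $S$ contains terms such as $g_6(x_0,0)$ and $\alpha_0x_0f_3(x_0,0)$ that are not divisible by $x_1$, so after the substitution $\Gamma$ is no longer $(x_1=u=0)$ and the pure $x_0$-part does not vanish; also the role of the preceding lemma is inverted, since what it supplies is that the degree-$5$ term $-q(x_0,0)f_3(x_0,0)+f_5(x_0,0)$ is \emph{nonzero} (order exactly $5$), not an inequality ``$\ge 5$''. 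Finally, the degenerate case $y_1\mid B$ (e.g.\ $\gamma=0$ together with the vanishing of the $x_0^6$-coefficients coming from $b_6$ and $c_8$), in which $\Gamma$ appears with multiplicity $\ge 2$ in $(x_1=0)_{X'}|_S$, is not excluded by Condition \ref{cond} and is only waved at. The paper's proof supplies precisely the missing content: it performs the weighted blowup of $S$ at $\msp'_1$, identifies $E=E_1+E'$ using the lemma on $f_3,f_5$, shows $\tilde\Gamma$ meets $E_1$ transversally and misses $E'$, bounds $r\le1/2$ from the vanishing order of $x_1$ along $E_1$, and concludes by adjunction on $T$; your last sentence (``pass to the divisorial extraction and use the weighted blow-up formula'') is essentially an instruction to do this, not a proof of it.
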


\begin{proof}
The section which cuts out $S$ on $X'$ can be written as $z + x_1 q + \alpha_0 y_1 x_0 + \alpha_1 y_1 x_1$, where $q = q (x_0,x_1)$ is a quadric and $\alpha_0,\alpha_1 \in \mbC$.
We work on the open subset on which $y_0 \ne 0$.
Let $\varphi \colon T \to S$ be the weighted blowup of $S$ at $\msp'_1$ with $\wt (x_0,x_1,y_1,z) = \frac{1}{2} (1,1,4,3)$, $E$ its exceptional divisor and $\tilde{\Gamma}$ the proper transform of $\Gamma$ on $T$.
We claim that $E = E_1 + E'$, where $E_1$ is a prime divisor, $E'$ does not contain $E_1$ as a component, $(\tilde{\Gamma} \cdot E_1) = 1$ and $\tilde{\Gamma}$ is disjoint from the support of $E'$.
Indeed we have the isomorphisms
\[
\begin{split}
E &\cong (z^2 + z f_3 + x_1 f_5 = z + x_1 q = 0) \subset \mbP (1,1,4,3) \\
&\cong (x_1^2 q^2 - x_1 q f_3 + x_1 f_5 = 0) \subset \mbP (1,1,4).
\end{split}
\] 
We set $E_1 = (x_1 = 0)$ and $E' = (x_1 q^2 - q f_3 + f_5 = 0)$.
Since at least one of $f_3$ and $f_5$ is not divisible by $x_1$ and $q$ is general, we see that $E'$ does not contain $E_1$ as a component and $E'$ is disjoint from $\tilde{\Gamma}$.
Moreover, $E_1$ intersects $\tilde{\Gamma}$ transversally at a nonsingular point.
This proves the claim.

We write $\varphi^*\Gamma = \tilde{\Gamma} + r E_1 + F$ for some rational number $r$ and an effective $\mbQ$-divisor $F$ whose support is contained in $\operatorname{Supp} E'$.
We have $r \le 1/2$ since the section $x_1$ cuts out on $S$ the union of  the curve $\Gamma$ and another curve, and $x_1$ vanishes along $E_1$ to order $1/2$.
An explicit computation shows that $K_T = \varphi^*K_S - E$ and we see that $\tilde{\Gamma} \cong \mbP^1$.
We have 
\[
(\Gamma^2) = (\varphi^*\Gamma \cdot \tilde{\Gamma}) = (\tilde{\Gamma}^2) + (r E_1+F \cdot \tilde{\Gamma}) = (\tilde{\Gamma}^2) + r
\] 
and 
\[
(\tilde{\Gamma}^2) = - (K_T\cdot \tilde{\Gamma}) - 2 = - (K_S \cdot \Gamma) - 1.
\]
Combining these with $(K_S \cdot \Gamma) = 2 \deg \Gamma = 1$, we get $(\Gamma^2) = -2 + r \le -3/2$.
\end{proof}

\begin{Prop} \label{prop:exclcurveX'}
No curve on $X'$ is a maximal center.
\end{Prop}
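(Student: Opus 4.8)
The plan is to reduce, via the lemmas already in hand, to the single curve $\Gamma = (x_1 = y_1 = z = 0)$ of degree $1/2$ passing through the $cAx/2$ point $\msp'_1$, and then to play the negative self-intersection $(\Gamma^2)_S \le -3/2$ on the surface $S$ off against the fact that the residual part of a maximal linear system restricted to $S$ is necessarily movable. By Lemma \ref{lem:exclcurveX'} the only curve that could be a maximal center is a curve $\Gamma$ with $(A \cdot \Gamma) = 1/2$ avoiding $\msp'_3$; using the symmetry between $y_0$ and $y_1$ we may assume $\msp'_1 \in \Gamma$, and by the discussion preceding Lemma \ref{gammaselfint} we have $\Gamma = (x_1 = y_1 = z = 0)$ after a coordinate change. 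So it suffices to show this $\Gamma$ is not a maximal center. Suppose it were, with maximal linear system $\mcH \sim_{\mbQ} n A$. Since the generic point of $\Gamma$ is a nonsingular point of $X'$, non-canonicity of $(X', \tfrac1n \mcH)$ there forces $m := \mult_\Gamma \mcH > n$.

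The next step is to restrict $\mcH$ to a general member $S$ of the linear system $\mcM \subset |3A|$ introduced before Lemma \ref{gammaselfint}, and to write $\mcH|_S = m_S \Gamma + \mcN$, where $m_S = \mult_\Gamma(\mcH|_S)$ and $\mcN$ is the movable part; note $m_S \ge m > n$ because $S$ is nonsingular at the generic point of $\Gamma$. The point I would need to check carefully is that $\mcN$ has no fixed curve at all: any fixed curve of $\mcH|_S$ lies in $\Bs \mcH \cap S$, and being contained in the general member $S$ of $\mcM$ it lies in $\Bs \mcM = \Gamma \cup \{\msp'_2\}$, hence must equal $\Gamma$. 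Consequently the general member of $\mcN$ is a nef divisor on $S$, so $(\mcN^2)_S \ge 0$. On the other hand, with $L := A|_S$ we have $(L^2)_S = 3 (A^3) = 2$ and $(L \cdot \Gamma)_S = (A \cdot \Gamma) = 1/2$, so from $\mcN \sim_{\mbQ} n L - m_S \Gamma$ together with Lemma \ref{gammaselfint},
\[
0 \le (\mcN^2)_S = 2 n^2 - n m_S + m_S^2 (\Gamma^2)_S \le 2 n^2 - n m_S - \tfrac32 m_S^2 .
\]
The right-hand side is strictly decreasing in $m_S > 0$, so for $m_S > n$ it is less than $2 n^2 - n^2 - \tfrac32 n^2 = -\tfrac12 n^2 < 0$, a contradiction. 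Hence $\Gamma$, and therefore every curve on $X'$, fails to be a maximal center.

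I expect the only delicate point to be the claim that $\mcH|_S$ acquires no fixed curve besides $\Gamma$; this is precisely where it matters that $\Bs \mcM$ contains no curve other than $\Gamma$, and that $S$ may be chosen general after $\mcH$ has been fixed. Everything else is bookkeeping: the inequality $m > n$ is the standard characterization of a curve maximal center at a nonsingular generic point, and the substantive inequality $(\Gamma^2)_S \le -3/2$ has already been supplied by Lemma \ref{gammaselfint} — it is exactly the bound that makes the displayed quadratic in $m_S$ negative on the forbidden range $m_S > n$.
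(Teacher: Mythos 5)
Your proof is correct and follows essentially the same route as the paper: reduce via Lemma \ref{lem:exclcurveX'} to the degree-$1/2$ curve $\Gamma=(x_1=y_1=z=0)$, restrict the maximal system to a general $S\in\mcM$, observe that the only possible fixed curve is $\Gamma$ because $\Bs\mcM$ contains no other curve, and contradict $(\mcN^2)\ge 0$ using $(\Gamma^2)\le -3/2$ from Lemma \ref{gammaselfint}. The only difference is cosmetic: the paper normalizes by $n$ (writing $A|_S\sim_{\mbQ}\tfrac1n\mcL+\gamma\Gamma$ with $\gamma>1$) while you keep $n$ and $m_S$ explicit, and the resulting quadratic inequality is the same.
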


\begin{proof}
By Lemma \ref{lem:exclcurveX'}, it is enough to exclude a curve $\Gamma$ of degree $1/2$ which does not pass through $\msp'_3$.
We keep the above notation.
We assume that $\Gamma$ is a maximal center.
An extremal divisorial extraction (between terminal $3$-folds) centered along a curve is unique, if it exists, and it is generically the blowup along $\Gamma$.
Hence there is a movable linear system $\mcH \subset \left| - n K_{X'} \right|$ on $X'$ such that $\mult_{\Gamma} \mcH > n$.
Let $S$ be a general member of $\mcM$ so that we have
\[
(-K_{X'})|_S \sim_{\mbQ} \frac{1}{n} \mcH|_S = \frac{1}{n} \mcL + \gamma \Gamma,
\]
where $\mcL$ is the movable part of $\mcH|_S$ and $\gamma \ge \mult_{\Gamma} \mcH/n > 1$.
This is possible since the base locus of $\mcM$ does not contain a curve other than $\Gamma$.
Let $L$ be a $\mbQ$-divisor on $S$ such that $n L \in \mcL$.
Note that $(L^2) \ge 0$ since $L$ is nef.
We get
\[
(L^2) = (-K_{X'}|_S - \gamma \Gamma)^2 = 3 (-K_{X'})^3 - 2 (\deg \Gamma) \gamma + (\Gamma^2) \gamma^2 = 2 - \gamma + (\Gamma^2) \gamma^2.
\]
Since $(\Gamma^2) < -3/2$ by Lemma \ref{gammaselfint} and $\gamma > 1$, we have
\[
(L^2) < 2 - 1 + (\Gamma^2) \le - 1/2.
\]
This is a contradiction and $\Gamma$ is not a maximal center.
\end{proof}

\section{Excluding maximal centers on $X_1$ and $X_2$} \label{sec:exclX_i}

In this section let $(a_6,b_6,c_8)$ be a triplet satisfying Condition \ref{cond}.
We exclude nonsingular points and curves on $X$, where $X$ is either $X_1$ or $X_2$.

\subsection{Nonsingular points}

\begin{Prop} \label{prop:exclnsptX}
No nonsingular point on $X$ is a maximal center.
\end{Prop}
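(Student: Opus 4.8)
The plan is to follow the same strategy used for the nonsingular points of $X'$ in Proposition \ref{prop:exclnsptX'}: for each nonsingular point $\msp \in X$ (where $X$ is $X_1$ or $X_2$, embedded in $\mbP := \mbP(1,1,2,3,4,4)$ with $A = -K_X$ and $(A^3) = 1/12$), exhibit an explicit finite set of homogeneous polynomials in $x_0,x_1,y,z,s_0,s_1$ that isolates $\msp$ in the sense of the Definition above, compute the maximum $l$ of their degrees so that $l A$ isolates $\msp$, and then invoke Lemma \ref{lem:exclcrinspt}, which requires $l \le 4(-K_X)^3 = 1/3$. Since $(A^3)$ is tiny here, the naive bound $4(A^3)=1/3$ is less than $1$, so a more careful argument than a direct application of Lemma \ref{lem:exclcrinspt} will be needed; I expect the actual approach is to use the finer criterion from \cite{CPR}/\cite{Okada2} that applies the test class method with multiplicities, or to observe that every nonsingular point is exhibited as an isolated component of a base locus using sections of low degree and then bound the self-intersection numbers on a suitable surface, along the lines of the curve-exclusion argument.

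More concretely, I would stratify the nonsingular locus of $X$ according to which of the coordinates $x_0,x_1$ vanish at $\msp = (\xi_0:\xi_1:\eta:\zeta:\sigma_0:\sigma_1)$. First, if $\xi_0 \ne 0$, the tautological set $\{\xi_0 x_1 - \xi_1 x_0,\ \xi_0^2 y - \eta x_0^2,\ \xi_0^3 z - \zeta x_0^3,\ \xi_0^4 s_0 - \sigma_0 x_0^4,\ \xi_0^4 s_1 - \sigma_1 x_0^4\}$ isolates $\msp$, giving that $4A$ isolates $\msp$; the same works if $\xi_1 \ne 0$. The remaining case is $\xi_0 = \xi_1 = 0$, i.e.\ $\msp$ lies on the locus $(x_0 = x_1 = 0)_X$, which is a curve (a weighted complete intersection in $\mbP(2,3,4,4)$); there one replaces the linear sections by $x_0, x_1$ together with sections in $y,z,s_0,s_1$ adapted to $\msp$. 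Here I would have to be careful: near the singular points $\msp_1, \msp_2, \msp_3$ the curve $(x_0=x_1=0)_X$ may behave badly, and some points on this curve may require sections of degree up to $8$ or $12$ to isolate, in which case the crude form of Lemma \ref{lem:exclcrinspt} fails and one must fall back on a direct estimate.

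The main obstacle, therefore, is exactly this low-$(A^3)$ issue: because $(-K_X)^3 = 1/12$, the bound $l \le 4(-K_X)^3 = 1/3$ in Lemma \ref{lem:exclcrinspt} can never literally be met by an honest divisor $lA$ with $l \ge 1$, so the proof cannot be a naive citation of that lemma. I anticipate the real argument proceeds either (i) by the intersection-theoretic method: take a general surface $S$ in a well-chosen pencil through $\msp$ (for instance a pencil of members of $|mA|$ for small $m$), restrict a hypothetical maximal linear system $\mcH \sim_{\mbQ} nA$ to $S$, write $\mcH|_S = \mcL + \gamma\,\Gamma$ with $\Gamma$ the curve through $\msp$ cut out by the base locus and $\gamma$ controlled by $\mult_\msp \mcH$, and derive a contradiction from $(L^2)\ge 0$ using a self-intersection bound $(\Gamma^2) < 0$ analogous to Lemma \ref{gammaselfint}; or (ii) by the $4n^2$-inequality applied at $\msp$ combined with the fact that the generic member of $|A|$ (or $|2A|$) through $\msp$ has controlled singularities. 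I would set up approach (i), using the projections $\pi_i \colon X_i \ratmap \mbP(1,1,2,3,4)$ and the surface $S = $ a general member of a suitable subsystem of $|3A|$ or $|4A|$ containing $\msp$, compute $(A^3)$, $(A\cdot\Gamma)$, and $(\Gamma^2)$ explicitly on $S$, and conclude $(L^2) < 0$, contradicting nefness of $L$; the delicate part is verifying that such an $S$ is quasismooth (or has only mild singularities) along the relevant curve, which is where Lemma \ref{lem:qsmcurve} and the quasismoothness of $(a_6=c_8=0)\subset\mbP(1,1,3)$ and $(b_6=c_8=0)\subset\mbP(1,1,3)$ will be used.
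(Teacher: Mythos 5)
Your plan goes off the rails because of a numerical error: for $X = X_1$ or $X_2$, a weighted complete intersection of degrees $6$ and $8$ in $\mbP(1,1,2,3,4,4)$ with $A = -K_X = \mcO_X(1)$, one has $(A^3) = \frac{6\cdot 8}{1\cdot 1\cdot 2\cdot 3\cdot 4\cdot 4} = \frac{1}{2}$, not $1/12$ (the paper itself uses $(A^3)=1/2$ when excluding curves on $X$). Moreover the isolation criterion, as it is actually applied both in \cite{CPR} and in this paper (see the exclusion of nonsingular points on $X'$, where the check is $3 < 6 \le 4/(A^3) = 6$), is that $lA$ isolating $\msp$ with $l \le 4/(A^3)$ excludes $\msp$; the expression ``$l \le 4(-K_X)^3$'' in the statement of Lemma \ref{lem:exclcrinspt} is a misprint. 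With $(A^3) = 1/2$ the bound is $4/(A^3) = 8$, so there is no ``low-$(A^3)$ obstruction'' at all, and the direct argument you set up at the start of your second paragraph --- which is exactly the paper's argument --- goes through: when $\xi_0 \ne 0$ (or $\xi_1 \ne 0$) the tautological set of polynomials of degree at most $4$ isolates $\msp$, hence $4A$ isolates $\msp$ and $4 < 8$.

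For the remaining stratum $\xi_0 = \xi_1 = 0$ your proposal stops at a worry and never resolves it, whereas the resolution is short: if in addition $\eta = 0$, the defining equations force $\zeta = 0$ and $\sigma_0\sigma_1 = 0$, so $\msp$ is one of the singular points $\msp_1,\msp_2$ of type $\frac{1}{4}(1,1,3)$; hence $\eta \ne 0$, and the set $\{x_0,\, x_1,\, \eta^2 s_0 - \sigma_0 y^2,\, \eta^2 s_1 - \sigma_1 y^2\}$, of maximal degree $4$, isolates $\msp$ because its zero locus on $X$ is the fiber over $\pi(\msp)$ of the projection $\pi \colon X \to \mbP(1,1,2,4,4)$ to the coordinates $x_0,x_1,y,s_0,s_1$, which contracts no curve. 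So again $4A$ isolates $\msp$ and Lemma \ref{lem:exclcrinspt} applies; no sections of degree $8$ or $12$, and none of the surface-restriction/self-intersection machinery of your alternative (i), is needed. As written, your proposal therefore has a genuine gap: its central premise (that the isolation lemma cannot apply here) is false, and the fallback argument is only sketched, with the key case $\xi_0=\xi_1=0$ left unhandled.
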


\begin{proof}
We show that $-4K_X$ isolates $\msp$.
Let $\msp = (\xi_0 \!:\! \xi_1 \!:\! \eta \!:\! \zeta \!:\! \sigma_0 \!:\! \sigma_1)$ be a nonsingular point of $X$.
If $\xi_0 \ne 0$, then the set
\[
\{ \xi_1 x_0 - \xi_0 x_1, \xi_0^2 y - \eta x_0^2, \xi_0^3 z - \zeta x_0^3, \xi_0^4 s_0 - \sigma_0 x_0^4, \xi_0^4 s_1 - \sigma_1 x_0^4 \}
\]
isolates $\msp$ and thus $- 4 K_X$ isolates $\msp$.
Similarly, if $\xi_1 \ne 0$, then $- 4 K_X$ isolates $\msp$.
Assume that $\xi_0 = \xi_1 = 0$.
If further $\eta = 0$, then $\msp$ is a singular point of type $\frac{1}{4} (1,1,3)$.
Hence $\eta \ne 0$ and the set
\[
\Lambda := \{ x_0,x_1, \eta^2 s_0 - \sigma_0 y^2, \eta^2 s_1 - \sigma_1 y^2 \}
\]
isolates $\msp$.
It follows that $- 4 K_X$ isolates $\msp$.
By Lemma \ref{lem:exclcrinspt}, $\msp$ is not a maximal center since $4 < 4/(-K_X)^3 = 8$.
\end{proof}

\subsection{Curves}

\begin{Prop} \label{prop:exclcurveX}
No curve on $X$ is a maximal center.
\end{Prop}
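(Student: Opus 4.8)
The plan is to combine the numerical constraint that any maximal center satisfies with a divisibility property of intersection numbers coming from the small linear system $|2A|$.

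First I would record that $(A^3)=(-K_X)^3 = 6\cdot 8/(1\cdot 1\cdot 2\cdot 3\cdot 4\cdot 4)=1/2$, so that by \cite[Lemma 2.9]{Okada2} (the same input used in Lemma \ref{lem:exclcurveX'}) a curve $\Gamma\subset X$ can be a maximal center only if $(A\cdot\Gamma)<(A^3)=1/2$, while $(A\cdot\Gamma)>0$ since $A=-K_X$ is ample. If $\Gamma$ passes through one of the terminal quotient singular points $\msp_1$, $\msp_2$ of type $\frac{1}{4}(1,1,3)$, then it is not a maximal center, because there is no extremal divisorial extraction centered along a curve passing through a terminal quotient singularity (cf.\ the proof of Lemma \ref{lem:exclcurveX'}). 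So I may assume from now on that $\Gamma$ is disjoint from $\{\msp_1,\msp_2\}$; it is still allowed to pass through the $cAx/2$ point $\msp_3$.

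Next I would examine $|2A|$. By Lemma \ref{lem:h_0} it is spanned by $x_0^2$, $x_0x_1$, $x_1^2$ and $y$, so $\Bs|2A|=(x_0=x_1=y=0)\cap X$. On the locus $x_0=x_1=y=0$ the degree $8$ defining equation of $X$ restricts to $s_0s_1$ (the polynomial $c_8$ has no monomial which is a power of $z$ alone, for degree reasons), while the degree $6$ defining equation restricts to $cz^2$ with $c\neq 0$, because the coefficient of $z^2$ in $a_6$ (and in $b_6$) is non-zero by Condition \ref{cond} --- this is forced by the quasismoothness of $(a_6=0)\subset\mbP(1,1,3)$. Hence $\Bs|2A|=\{\msp_1,\msp_2\}$ is a finite set, and since $\Gamma$ is a curve a general member $D\in|2A|$ does not contain $\Gamma$. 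Moreover $\mcO_X(2A)$ is invertible away from $\msp_1$ and $\msp_2$: this is automatic at nonsingular points, and it holds at the $cAx/2$ point $\msp_3$ because its local class group is $2$-torsion by Lemma \ref{QfaccAx}. As $\Gamma$ avoids $\msp_1$ and $\msp_2$, the divisor $D$ is Cartier in a neighborhood of $\Gamma$, so $(2A\cdot\Gamma)=(D\cdot\Gamma)\in\mbZ_{>0}$, that is $(A\cdot\Gamma)\in\frac{1}{2}\mbZ_{>0}$. This contradicts $(A\cdot\Gamma)<1/2$, and so $\Gamma$ is not a maximal center.

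I expect the argument to be essentially mechanical, with the only delicate point being the base locus computation for $|2A|$: one must invoke $z^2\in a_6$ (a consequence of the $cAx/2$ condition) in order to exclude the possibility that $\Bs|2A|$ contains a curve, which would otherwise break the divisibility step. It is worth contrasting this with $X'$, where $(A^3)=2/3>1/2$ leaves curves of degree exactly $1/2$ surviving the crude bound --- this is precisely why Lemma \ref{gammaselfint} and the self-intersection estimate of Proposition \ref{prop:exclcurveX'} are needed there --- whereas for $X_1$ and $X_2$ the smaller anticanonical degree makes the divisibility bound alone decisive.
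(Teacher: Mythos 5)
Your proposal is correct and follows essentially the same route as the paper: curves through the $\frac{1}{4}(1,1,3)$ points are excluded because no extremal divisorial extraction is centered along such a curve, and for the remaining curves one shows $(2A\cdot\Gamma)\in\mbZ_{>0}$, so $(A\cdot\Gamma)\ge 1/2=(A^3)$ contradicts the bound from \cite[Lemma 2.9]{Okada2}. Your extra verifications (base locus of $|2A|$, Cartierness of $2A$ at the index-two $cAx/2$ point) merely spell out why $(2A\cdot\Gamma)$ is a positive integer, which the paper states without detail.
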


\begin{proof}
Let $\Gamma$ be an irreducible curve on $X$.
If $\Gamma$ passes through a singular point of type $\frac{1}{4} (1,1,3)$, then there is no divisorial extraction centered along $\Gamma$ (\cite{Kaw96}), hence $\Gamma$ cannot be a maximal center.
If $\Gamma$ does not pass through a $\frac{1}{4} (1,1,3)$ point, then $(- 2 K_X \cdot \Gamma)$ is a positive integer and thus $(-K_X \cdot \Gamma) \ge 1/2$.
By \cite[Lemma 2.9]{Okada2}, $\msp$ is not a maximal center since $(-K_X)^3 = 1/2$.
This completes the proof.
\end{proof}

\section{Sarkisov links and the birational automorphism group of $X'$} \label{sec:biraut}

Throughout this section, we assume that $(a_6,b_6,c_8)$ satisfies Condition \ref{cond}.
We state a classification result of Sarkisov links and give a description of the birational automorphism group of $X'$.

By the construction given in Section \ref{sec:links}, explicit descriptions of links $\sigma_{ij}$ and $\theta$ between $X'$, $X_1$ and $X_2$ are given as follows:
\[
\begin{split}
\sigma_{11} & \colon X' \ratmap X_1, \  (x_0 \!:\! x_1 \!:\! y_0 \!:\! y_1 \!:\! z) \mapsto (x_0 \!:\! x_1 \!:\! y_1 \!:\! z \!:\! -y_0 y_1 - \frac{a_6}{y_1} \!:\! y_0 y_1), \\
\sigma_{11}^{-1} & \colon X_1 \ratmap X', \ (x_0 \!:\! x_1 \!:\! y \!:\! z \!:\! s_0 \!:\! s_1) \mapsto
(x_0 \!:\! x_1 \!:\! \frac{s_1}{y} \!:\! y \!:\! z), \\
\sigma_{21} & \colon X' \ratmap X_2, \ (x_0 \!:\! x_1 \!:\! y_0 \!:\! y_1 \!:\! z) \mapsto (x_0 \!:\! x_1 \!:\! y_0 \!:\! z \!:\! -y_0 y_1 - \frac{b_6}{y_0} \!:\! y_0 y_1), \\
\sigma_{21}^{-1} & \colon X_2 \ratmap X', \ (x_0 \!:\! x_1 \!:\! y \!:\! z \!:\! s_0 \!:\! s_1) \mapsto
(x_0 \!:\! x_1 \!:\! y \!:\!  \frac{s_1}{y} \!:\! z), \\
\theta & \colon X_1 \ratmap X_2, \ (x_0 \!:\! x_1 \!:\! y \!:\! z \!:\! s_0 \!:\! s_1) \mapsto (x_0 \!:\! x_1 \!:\! \frac{b_6}{a_6} y \!:\! z \!:\! s_0 \!:\! s_1), \\
\iota' & \colon X' \ratmap X', \ (x_0 \!:\! x_1 \!:\! y_0 \!:\! y_1 \!:\! z) \mapsto (x_0 \!:\! x_1 \!:\! y_0 \!:\! y_1 \!:\! - z - \frac{y_0 f_3 + y_1 g_3 + h_5}{y_0+y_1+h_2}),
\end{split}
\]
where $f_3,g_3, h_2, h_5$ are the polynomials defined in Section \ref{sec:birinvX'}.
See Section \ref{sec:links3} (resp.\ the proof of Proposition \ref{prop:theta}, resp.\ Section \ref{sec:birinvX'}) for the descriptions of $\sigma_{11}, \dots,\sigma_{21}^{-1}$ (resp.\ $\theta$, resp.\ $\iota'$).
We also defined $\sigma_{i2} = \eta_i \circ \sigma_{i1}$ and $\sigma_{i2}^{-1} = \sigma_{i1}^{-1} \circ \eta_i$ for $i = 1,2$, where $\eta_i$ is the biregular involution of $X_i$ interchanging $s_0$ and $s_1$.

\begin{Def}
In the case where $(a_6,b_6,c_8)$ is symmetric, we set $X := X_1 \cong X_2$ and $\sigma_j := \sigma_{1j}$ for $j = 1,2$. 
We define the set of Sarkisov links as
\[
\Sigma :=
\begin{cases}
\{ \sigma_{11}^{\pm}, \sigma_{12}^{\pm}, \sigma_{21}^{\pm}, \sigma_{22}^{\pm}, \theta^{\pm}, \iota' \}, & \text{if $(a_6,b_6,c_8)$ is asymmetric}, \\
\{ \sigma_1^{\pm}, \sigma_2^{\pm}, \theta^{\pm}, \iota' \}, & \text{if $(a_6,b_6,c_8)$ is symmetric and $a_6 \not\sim b_6$}, \\
\{ \sigma_1^{\pm}, \sigma_2^{\pm}, \iota' \}, & \text{if $a_6 \sim b_6$}.
\end{cases}
\]
\end{Def}

\begin{Thm}
The links in $\Sigma$ are all the Sarkisov links between the birational Mori fiber structures of $X'$.
\end{Thm}

\begin{proof}
This follows from Theorem \ref{mainthm2}.
\end{proof}

We have the following relations
\[
\begin{split}
\eta'_i &:= \sigma^{-1}_{i2} \circ \sigma_{i1} = \sigma^{-1}_{i1} \circ \sigma_{i2}, \\
\theta' &:= \sigma^{-1}_{21} \circ \theta \circ \sigma_{11} = \sigma^{-1}_{22} \circ \theta \circ \sigma_{12} = \sigma^{-1}_{11} \circ \theta^{-1} \circ \sigma_{21} = \sigma^{-1}_{12} \circ \theta^{-1} \circ \sigma_{22},
\end{split}
\]
where $\eta'_i$, $i = 1,2$, and $\theta'$ are birational involutions of $X'$ whose explicit descriptions are given as follows:
\[
\begin{split}
\eta'_1 \colon &(x_0 \!:\! x_1 \!:\! y_0 \!:\! y_1 \!:\! z) \mapsto (x_0 \!:\! x_1 \!:\! -y_0 - \frac{a_6}{y_1^2} \!:\! y_1 \!:\! z), \\
\eta'_2 \colon &(x_0 \!:\! x_1 \!:\! y_0 \!:\! y_1 \!:\! z) \mapsto (x_0 \!:\! x_1 \!:\! y_0 \!:\! - y_1 - \frac{b_6}{y_0^2} \!:\! z), \\
\theta' \colon &(x_0 \!:\! x_1 \!:\! y_0 \!:\! y_1 \!:\! z) \mapsto (x_0 \!:\! x_1 \!:\! \frac{b_6}{a_6} y_1 \!:\! \frac{a_6}{b_6} y_0 \!:\! z).
\end{split}
\]
Furthermore, we have the following relations
\[
\begin{split}
\eta'_2 &= \theta' \circ \eta'_1 \circ \theta', \\
\eta'_1 \circ \theta' &= \sigma_{12}^{-1} \circ \theta^{-1} \sigma_{21} = \sigma_{11}^{-1} \circ \theta^{-1} \circ \sigma_{22}, \\
\theta' \circ \eta'_1 &= \eta'_2 \circ \theta' = \sigma_{22}^{-1} \circ \theta \circ \sigma_{11} = \sigma_{21}^{-1} \circ \theta \circ \sigma_{12}.
\end{split}
\]
We refer the readers to \cite{Kal} for a general and theoretical treatment of relations of Sarkisov links.

\begin{Thm}
The birational automorphism group $\Bir (X')$ of $X'$ is generated by $\Aut (X')$ and the birational involutions $\eta'_1$, $\theta'$ and $\iota'$.
Moreover, $\theta'$ is biregular if and only if $a_6$ is proportional to $b_6$.
\end{Thm}

\begin{proof}
By the Sarkisov program (see \cite{Corti1}), any birational automorphism $\nu$ of $X'$ is the composite of Sarkisov links $\nu_i \colon V_i \ratmap V_{i+1}$ and an automorphism $\mu$ of $X'$:
\[
\nu \colon X' = V_0 \overset{\nu_0}{\ratmap} V_1 \overset{\nu_1}{\ratmap} \cdots \overset{\nu_{n-1}}{\ratmap} V_n = X' \overset{\mu}{\to} X'.
\]
Note that $V_i \in \{X',X_1,X_2\}$ (or $V_i \in \{X',X\}$) and $\nu_i \in \Sigma$.
Let $k \ge 1$ be the minimum number such that $V_k = X'$.
By considering all the combinations of links $\nu_0,\dots,\nu_{k-1}$, the birational map $X' = V_0 \ratmap V_1 \ratmap \cdots \ratmap V_k = X'$ is one of $\eta'_1$, $\eta'_2 = \theta' \circ \eta'_1 \circ \theta'$, $\theta'$, $\eta'_1 \circ \theta'$, $\eta'_2 \circ \theta' = \theta' \circ \eta'_1$ and $\iota'$.
It follows that $\nu$ is the composite of $\eta'_1$, $\theta'$, $\iota'$ and an automorphism of $X'$.

We prove the remaining part.
It follows immediately from the explicit description of $\theta'$ that if $a_6 \sim b_6$, then $\theta'$ is biregular.
Suppose that $a_6 \not\sim b_6$.
Let $Y_2 \to X_2$ be the divisorial extraction of $X_2$ centered at the $cAx/2$ point $\msp_3$ and $E_2$ its exceptional divisor.
Then, by the proof of Proposition \ref{prop:theta} and Remark \ref{rem:thetadiv}, $E_2$ is not contracted by the induced birational map $Y_2 \ratmap X_1$ and its proper transform on $X_1$ is the divisor $D := (s_0+s_1=0)_{X_1}$. 
Since the link $\sigma_{11}^{-1}$ is centered at $\msp_1 \in X_1$ and $D$ does not pass through $\msp_1$, we see that $D$ cannot be contracted by $\sigma_{11}^{-1}$, and we denote by $D'$ the proper transform of $D$ via $\sigma_{11}^{-1}$.
By the construction of $D'$, it is contracted to $\msp_3 \in X_2$ via $\theta \circ \sigma_{11}$.
Now, by the explicit description of $\sigma_{21}$ and $\sigma_{21}^{-1}$, the link $\sigma_{21}$ induces an isomorphism between open neighborhoods of $\msp_3 \in X_2$ and $\msp'_1 \in X'$.
This shows that $D'$ is contracted to $\msp'_1 \in X'$ via $\theta'$.
Therefore $\theta'$ is not biregular and the proof is completed.
\end{proof}

\begin{Rem}
Assume that $a_6$ is general.
Here, as a generality condition, we require that there is no non-trivial automorphism of $\mbP (1,1,3)$ which leaves $(a_6 = 0)$ invariant.
In this case, we describe $\Aut (X')$ in detail without giving a proof.
If $(a_6,b_6,c_8)$ is asymmetric, then $\Aut (X') = \{\operatorname{id}\}$.
This can be proved by a similar way as in the proof of Proposition \ref{prop:sym}.
We keep the same generality of $a_6$ and consider the symmetric triplet $(a_6,a_6,c_8)$.
In this case, the birational involution $\theta'$ is a biregular automorphism interchanging $y_0$ and $y_1$, and $\Aut (X')$ is generated by $\theta'$. 
In both of the above two cases, $\Bir (X')$ is generated by $\eta'_1$, $\iota'$, $\theta'$ and the only difference is whether $\theta'$ is biregular or not.
Now we fix a general $a_6$ and $c_8$ and let $e_6 \in \mbC [x_0,x_1]$ be a general homogeneous polynomial of degree $6$.
For $t \in \mbC$, let $X'_t$ be the weighted hypersurface corresponding to the triplet $(a_6,a_6 + t e_6,c_8)$.
Then, the above observation shows that $\Bir (X'_t)$ remains the same as a group for $t$ belonging to a small open disk $\Delta \ni 0$ while $\Aut (X'_0) \cong \mbZ/2 \mbZ$ and $\Aut (X'_t) = \{\operatorname{id}\}$ for $t \in \Delta \setminus \{0\}$.
\end{Rem}

\end{document}